\newtheorem{theorem}{Theorem}[section]
\newtheorem*{theorem*}{Theorem}
\newtheorem{corollary}[theorem]{Corollary}
\newtheorem{lemma}[theorem]{Lemma}
\newtheorem{rem}[theorem]{Remark}
\newtheorem{proposition}[theorem]{Proposition}
\theoremstyle{definition}
\newenvironment{remark}[1][Remark]{\begin{trivlist}
\item[\hskip \labelsep {\bfseries #1}]}{\end{trivlist}}
\newcommand{\ord}{\textbf{Ord}}
\newcommand{\rr}{\mathbb{R}}
\newcommand{\nn}{\mathbb{N}}
\newcommand{\ee}{\varepsilon}
\newcommand{\aaa}{\mathcal{A}}
\newcommand{\bbb}{\mathcal{B}}
\newcommand{\ccc}{\mathcal{C}}
\newcommand{\fff}{\mathcal{F}}
\newcommand{\mmm}{\mathcal{M}}
\newcommand{\hhh}{\mathcal{H}}
\newcommand{\mt}{\mathcal{MT}}
\newcommand{\ttt}{\mathcal{T}}
\newcommand{\nnn}{\mathcal{N}}
\newcommand{\cat}{^\smallfrown}
\newcommand{\Ban}{\textbf{Ban}}
\begin{document}

\title[Alternate description of the Szlenk index]{An alternate description of the Szlenk index \\ with applications}

\author{RM Causey}

\begin{abstract} We discuss an alternate method for computing the Szlenk index of an arbitrary $w^*$ compact subsets of the dual of a Banach space.  We discuss consequences of this method as well as offer simple, alternative proofs of a number of results already found in the literature.  

\end{abstract}

\maketitle

\section{Introduction}

Since its inception, Banach space theory has employed ordinal indices.  One of the most well-known indices is that introduced by Szlenk \cite{Sz}.  The index was originally used to prove the non-existence of a Banach space having separable dual which is universal for the class of Banach spaces having separable dual. Since its introduction, the standard definition of the Szlenk index has become different than that originally given by Szlenk, although the two definitions yield the same index for any separable Banach space not containing $\ell_1$ isomorphically. Because we are interested in computing the indices of operators on domains which may contain isomorphs of $\ell_1$, or the Szlenk index of non-separable Banach spaces, we use the now-common definition of the Szlenk index, and not the original definition.   Since Szlenk introduced his index, it has seen a number of uses \cite{L2} and has been the subject of significant study.  The Szlenk index can be defined for any $w^*$ compact subset of the dual of a Banach space.  The Szlenk index of a Banach space is then defined to be the Szlenk index of the closed unit ball of the dual space.  In \cite{AJO}, the authors established an alternative method for computing the Szlenk index of a Banach space whenever that Banach space is separable and does not contain an isomorphic copy of $\ell_1$.  In \cite{C2}, the author provided a partial extension of the methods of \cite{AJO} to provide an alternative characterization of the Szlenk index of certain $w^*$ compact subsets of the dual of a separable Banach space.  In this work, we provide a complete extension of these results to establish an alternative method, analogous to those used in \cite{AJO} and \cite{C2}, to compute the Szlenk index of any $w^*$ compact subset of the dual of a Banach space.  The methods in these works used certain minimal structures, namely the fine Schreier families, to witness the size of indices.  The use of the fine Schreier families, however, limits the applicability of these methods to those spaces in which pertinent properties (for example, $w^*$ convergence, or pointwise convergence on a subset of the dual space) are sequentially determined.  This work advances previous results in three ways: Given a Banach space $X$, with a description analogous to that appearing in \cite{AJO} used to compute the Szlenk index of $B_{X^*}$,  we have been able to compute the Szlenk index of any $w^*$ compact subset of $X^*$, while being able to do so without the assumptions of separablity of $X$ or that $\ell_1$ does not embed into $X$.  

 In this work, we introduce a convenient method of constructing minimal structures (analogues of the Schreier and fine Schreier families) which are able to take into account, for example, non-metrizability of the $w^*$ topology on the unit ball of the dual of a Banach space.  These minimal structures involve combining directed sets with minimal trees introduced by the author in \cite{C2}, and we believe this method of constructing minimal structures could be of independent interest.  These structures facilitate short, simple proofs of some new results, as well as new proofs of results already existing in the literature. After we provide an alternative characterization of the Szlenk index and prove that it is equivalent to the more common definition involving slicings, we are able to offer all of our proofs of both new and old results without ever referring again to the slicing definition.

\section{Definitions and the main theorem}

We follow standard Banach space notation. We will assume $X$ is a real Banach space, although the results apply as well to complex Banach spaces with appropriate modifications which we indicate along the way. If $X$ is a Banach space, we let $S_X$, $B_X$ denote the unit sphere and closed unit ball of $X$, respectively. If $S$ is a subset of $X$, we let $[S]$ denote the closed span of $S$. By a subspace of $X$, we mean a closed subspace of $X$.  By an operator between Banach spaces, we mean a bounded linear operator.  We let $\nn=\{1, 2, \ldots\}$ and $\nn_0=\{0\}\cup \nn$.  We let $\ord$ denote the class of ordinal numbers. We let $\Ban$ denote the class of all Banach spaces.  If $\Lambda$ is a set, we let $\Lambda^{<\nn}$ denote the finite sequences in $\Lambda$.  We include in $\Lambda^{<\nn}$ the sequence of length $0$, denoted $\varnothing$. We let $2^\Lambda$ denote the power set of $\Lambda$, $[\Lambda]^{<\nn}$ the finite subsets of $\Lambda$. If $s,t\in \Lambda^{<\nn}$, we let $s\cat t$ denote the concatenation of $s$ with $t$ listing the members of $s$ first. For $t\in \Lambda^{<\nn}$, we let $|t|$ denote the length of $t$.  We freely identify $\Lambda$ with sequences of length $1$ in $\Lambda^{<\nn}$.  That is, if $t$ is a sequence of length $1$, say $t=(x)$, we will write $x\cat s$ in place of $(x)\cat s$, etc.   We order $\Lambda^{<\nn}$ by letting $s\preceq t$ if $s$ is an initial segment of $t$.  For $T\subset \Lambda^{<\nn}$, we let $MAX(T)$ denote the set of maximal elements of $T$ with respect to the order $\preceq$.  Given $T\subset \Lambda^{<\nn}$, we say $T$ is a \emph{tree} if $T$ is downward closed with respect to the order $\preceq$.  We say $T\subset \Lambda^{<\nn}\setminus \{\varnothing\}$ is a $B$-\emph{tree} provided that $T\cup \{\varnothing\}$ is a tree.  All definitions below regarding trees can be relativized to $B$-trees.   We say a tree $T$ is \emph{hereditary} if for any $t\in T$ and any subsequence $s$ of $t$, $s\in T$.   We say a map $f:T\to T_0$ between trees is \emph{monotone} provided that for each $s,t\in T$ with $s\prec t$, $f(s)\prec f(t)$.   For any $t\in \Lambda^{<\nn}$ and any integer $n$ with $0\leqslant n\leqslant |t|$, we let $t|_n$ denote the initial segment of $t$ having length $n$. We let $p(t)=t|_{|t|-1}$ for each $t\in \Lambda^{<\nn}\setminus \{\varnothing\}$.  That is, for $t\in \Lambda^{<\nn}\setminus \{\varnothing\}$, $p(t)$ denotes the largest proper initial segment of $t$.  If $\Lambda_1$, $\Lambda_2$ are sets, we identify $(\Lambda_1\times \Lambda_2)^{<\nn}$ with $\{(s,t)\in \Lambda_1^{<\nn}\times \Lambda_2^{<\nn}: |s|=|t|\}$.  In this case, we identify $(\varnothing, \varnothing)$ with $\varnothing$.  

We next recall the \emph{order} of a tree. If $T$ is a tree on $\Lambda$, then we let $T'$ consist of all members of $T$ which are maximal in $T$ with respect to $\preceq$. We call $T'$ the \emph{derived tree} of $T$. We note that $T'$ is a tree (resp. hereditary tree) if $T$ is a tree (resp. hereditary tree). We then define the higher order derived trees $T^\xi$, $\xi\in \ord$, as follows: $$T^0=T,$$ $$T^{\xi+1}=(T^\xi)',$$ and if $T^\zeta$ has been defined for each $\zeta<\xi$, $\xi$ a limit ordinal, we define $$T^\xi=\bigcap_{\zeta<\xi} T^\zeta.$$  If there exists an ordinal $\xi$ so that $T^\xi=\varnothing$, we let $o(T)$ be the minimum such ordinal, and call $o(T)$ the \emph{order} of $T$.  If there is no such ordinal, we write $o(T)=\infty$.  To save a great deal of writing, we will agree that for $\xi\in \ord\cup \{\infty\}$, $\xi\infty = \infty\xi=\xi+\infty=\infty+\xi=\infty$.  Moreover, for any $\xi\in \ord$, we agree that $\xi<\infty$.  

If $T$ is a tree on $\Lambda$ and $t\in \Lambda^{<\nn}$, we let $T(t)=\{s\in \Lambda^{<\nn}: t\cat s\in T\}$.  This is a tree, empty if and only if $t\notin T$, hereditary if $T$ is hereditary.  It is easy to see that for any tree $T$ on $\Lambda$, $t\in \Lambda^{<\nn}$, and $\xi\in \ord$, $T^\xi(t)=(T(t))^\xi$.  It is also a standard induction argument that for any $\xi, \zeta\in \ord$ and any tree $T$ on $\Lambda$, $(T^\xi)^\zeta=T^{\xi+\zeta}$.

We next define a notion related to order and derived trees. Whereas a sequence $t\in T$ need only have one proper extension in $T$ to be admitted into $T'$, one is frequently interested in those members $t$ of $T$ for which there exists a collection $(x_U)_{U\in D}$ satisfying some property (such as being a weakly null net, as will be our primary interest) so that all proper extensions $t\cat x_U$ of $t$ lie in $T$. Given a subset $\hhh\subset \Lambda^{<\nn}$ and $\varnothing \neq D\subset 2^\Lambda$, we let $$(\hhh)'_D=\{t\in \hhh: \forall U\in D, \exists x\in U (t\cat x\in \hhh)\}.$$ We note that if $\hhh$ is a hereditary tree, $(\hhh)_D'$ is a hereditary tree as well. However, if $\hhh$ fails to be hereditary, $(\hhh)_D'$ may fail to be a tree.   Next, we define $(\hhh)_D^\xi$ for $\xi\in \ord$ by transfinite induction.  We let $$(\hhh)_D^0=\hhh,$$ $$(\hhh)_D^{\xi+1}= ((\hhh)_D^\xi)'_D,$$ and if $\xi$ is a limit ordinal and $(\hhh)_D^\zeta$ has been defined for each $\zeta<\xi$, we let $$(\hhh)_D^\xi=\bigcap_{\zeta<\xi}(\hhh)_D^\zeta.$$  If there exists $\xi\in \ord$ so that $(\hhh)_D^\xi=\varnothing$, we let $o_D(\hhh)$ be the minimum such ordinal.   If no such $\xi$ exists, we write $o_D(\hhh)=\infty$. If $D=\{\Lambda\}$, this recovers the usual notions of derived trees and the order of a tree. As with the usual notion of derived trees, $((\hhh)^\xi_D)^\zeta_D=(\hhh)^{\xi+\zeta}_D$ and for any $t\in \Lambda^{<\nn}$, $(\hhh(t))^\xi_D=(\hhh)^\xi_D(t)$.  If $X$ is a Banach space and $D$ is a weak neighborhood basis at zero, we write $(\hhh)_w'$, $(\hhh)_w^\xi$, and $o_w(\hhh)$ in place of $(\hhh)_D'$, $(\hhh)_D^\xi$, and $o_D(\hhh)$. We refer to the derivation $\hhh\mapsto (\hhh)_w'$ as the \emph{weak derivative}, and $o_w(\cdot)$ as the \emph{weak order}.    It is easy to see that if $D$, $D_0$ are two weak neighborhood bases at zero, the $D$ and $D_0$ derivations, and therefore the $D$ and $D_0$ orders, coincide, and there is no ambiguity in defining the weak derivative and weak order through a fixed weak neighbhood basis at zero.  

We note that the definition above is related to the notion of an $S$-derivative defined in \cite{OSZ}, which uses sequences.  While the definition above is not a direct generalization of the notion of an $S$-derivative, we note that all examples listed there are examples of the derivation defined here as well.  However, since we hope to extend previous results to the case of a non-separable Banach space, it is impossible to offer our characterization using sequences.   

We next recall the slicing definition of the Szlenk index. This will be our definition of the Szlenk index, although it differs from that originally given by Szlenk.  The definitions coincide when $X$ is a separable Banach space not containing $\ell_1$.   If $X$ is a Banach space, $K\subset X^*$ is $w^*$-compact, and $\ee>0$, we let $$s_\ee(K)=K\setminus \bigcup \{W\subset X^*: W \text{\ is\ }w^*\text{\ open}, \text{diam}(K\cap W)\leqslant \ee\}.$$  Of course, $s_\ee(K)$ is also $w^*$ compact.  We define the higher order derived sets by $$s_\ee^0(K)=K,$$ $$s_\ee^{\xi+1}(K)=s_\ee(s_\ee^\xi(K)),$$ and if $s_\ee^\zeta(K)$ has been defined for each $\zeta<\xi$, we let $$s_\ee^\xi(K)=\bigcap_{\zeta<\xi}s_\ee^\zeta(K).$$  We let $Sz_\ee(K)$ denote the minimum ordinal $\xi$ so that $s_\ee^\xi(K)=\varnothing$ if such an ordinal $\xi$ exists, and $Sz_\ee(K)=\infty$ otherwise.  We let $Sz(K)=\sup_{\ee>0} Sz_\ee(K)$.  

For a Banach space $X$, $\delta>0$, and $K\subset X^*$, let $$(K, \delta)=\Bigl\{\{x\in X: \forall x^*\in F \text{\ }(|x^*(x)|< \delta)\}: F\subset K \text{\ is finite}\Bigr\}.$$ Let $$\mmm=\Bigl\{\{x\in X: \forall x^*\in F \text{\ }(|x^*(x)|<\delta)\}: \delta>0, F\subset X^* \text{\ is finite}\Bigr\},$$    $$\nnn=\Bigl\{\{x^*\in X^*: \forall x\in F \text{\ }(|x^*(x)|<\delta) \}: \delta>0, F\subset X \text{ is finite}\Bigr\}.$$ Of course, these sets depend upon the Banach space $X$, but $X$ will be clear in most contexts.  If there is danger of ambiguity, we will write $\mmm(X)$ in place of $\mmm$, etc.  

 Observe that if we order all power sets by reverse inclusion, the sets defined above are all directed sets and closed under finite intersections. Moreover, $\mmm$ is a weak neighborhood basis at $0$ in $X$.    We will treat these sets as directed sets throughout.  

Throughout this work, for $K\subset X^*$ non-empty and $w^*$ compact, $\ee>0$, we let $$\hhh^K_\ee=\{t\in B_X^{<\nn}: \exists x^*\in K (x^*(x)\geqslant \ee \text{\ }\forall x\in t )\}.$$ We include the empty sequence in $\hhh^K_\ee$. We remark that for any $K\subset X^*$, any $\ee>0$, and any ordinal $\xi$, any convex block of a member of $(\hhh^K_\ee)_w^\xi$ is also a member of $(\hhh^K_\ee)_w^\xi$.  As with the sets $\mmm$, $\nnn$, etc., $\hhh^K_\ee$ depends upon the Banach space $X$ to which we omit direct reference.  In all contexts, it will be clear from the set $K$ in which Banach space the members of $\hhh^K_\ee$ lie.    

We remark that by the geometric version of the Hahn-Banach theorem, if $K=B_{X^*}$, the sequence $(x_i)_{i=1}^n$ lies in $\hhh^K_\ee$ if and only if every convex combination of $(x_i)_{i=1}^n$ has norm at least $\ee$.  For this reason, the index associated to the case $K=B_{X^*}$ has been referred to in the literature as the $\ell_1^+$ index \cite{AJO}.  More generally, for $\varnothing \neq K\subset X^*$ $w^*$ compact, we may define $|x|_K=\max_{x^*\in K} x^*(x)$.  It is obvious that if $(x_i)_{i=1}^n$ is such that there exists $x^*\in K$ so that $x^*(x_i)\geqslant \ee$ for each $1\leqslant i\leqslant n$, then any convex combination $x$ of $(x_i)_{i=1}^n$ has $|x|_K\geqslant \ee$.  If $K$ is symmetric and convex, the converse is also true.  This is seen by applying the geometric version of Hahn-Banach to separate the $\tau$-open convex set $\{x: |x|_K<\ee\}$ from the convex hull of $(x_i)_{i=1}^n$ by a linear functional $f:X\to \rr$ which is $\tau$-continuous, where $\tau$ is the topology on $X$ given by the seminorm $|\cdot|_K$. It is straightforward to verify in this case that those functionals $f:X\to \rr$ which are $\tau$-continuous are precisely those functionals $f\in X^*$ so that $|f|_K^*:=\sup_{|x|_K\leqslant 1} |f(x)|$ is finite and that $|f|_K^*\leqslant 1$ if and only if $f\in K$.  Therefore if we can separate $A$ from $B$ with a $\tau$-continuous functional $f:X\to \rr$, we may assume $|f|_K^*=1$, so $f\in K$, and that $\sup_{x\in A} f(x)\leqslant \inf_{x\in B}f(x)$.  It is immediate from the definitions that in this case, $\sup_{x\in A} f(x)= \ee$, whence $\ee\leqslant f(x_i)$ for each $1\leqslant i\leqslant n$.   We isolate this observation for future use.

\begin{rem} If $K\subset X^*$ is $w^*$ compact, non-empty, symmetric, and convex, and if $t\in B_X^{<\nn}$, then $t\in \hhh^K_\ee$ if and only if there exists $x^*\in K$ so that $x^*(x)\geqslant \ee$ for each $x\in t$.  We consider the empty sequence to satisfy both of these conditions.  

\label{useful remark}

\end{rem}

We note that in the complex case, we may define $\hhh^K_\ee$ similarly, except taking real parts of $x^*(x_i)$.  In this case, a similar characterization of membership in $\hhh^K_\ee$ exists using the appropriate complex version of the Hahn-Banach theorem.  We leave it to the reader to make the adjustments of the results below in the complex case.

We are now ready to state the main result.  

\begin{theorem} Suppose $K\subset X^*$ is $w^*$ compact and non-empty.  For any $\xi\in \emph{\ord}$, the following are equivalent: \begin{enumerate}[(i)]\item There exists $\ee>0$ so that $Sz_\ee(K)>\xi$.  \item There exists $\ee>0$ so that $o_w(\hhh^K_\ee)>\xi$. \item There exist $0<\delta<\ee$ so that $o_{(K, \delta)}(\hhh^K_\ee)>\xi$. \end{enumerate}
In particular, for any $w^*$ compact, non-empty subset $K$ of $X^*$, $$Sz(K)=\sup_{\ee>0}o_w(\hhh^K_\ee)=\sup_{\ee>\delta>0} o_{(K, \delta)}(\hhh^K_\ee).$$  

\label{main theorem}

\end{theorem}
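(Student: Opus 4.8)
The plan is to prove the cycle of implications $(i)\Rightarrow(ii)\Rightarrow(iii)\Rightarrow(i)$, which I would organize around a single key translation lemma relating the slicing derivation $s_\ee$ to the tree derivation $(\cdot)'_D$. The central observation is this: a functional $x^*\in K$ survives one slicing step, i.e. $x^*\in s_\ee(K)$, precisely when every $w^*$-neighborhood of $x^*$ meets $K$ in a set of diameter $>\ee$; equivalently, for every weak-neighborhood-type constraint one can find $y^*\in K$ near $x^*$ with $\|x^*-y^*\|$ close to $\ee$, hence a vector $x\in B_X$ on which $x^*$ and $y^*$ differ by nearly $\ee$. The bridge to the trees $\hhh^K_\ee$ is that a sequence $t$ lies in $\hhh^K_\ee$ iff some $x^*\in K$ has $x^*(x)\geqslant\ee$ for all $x\in t$, so the set $K_t := \{x^*\in K : x^*(x)\geqslant\ee\ \forall x\in t\}$ is a nonempty $w^*$-compact ``tube'' attached to $t$, and $t$ admits a one-step weak extension in $\hhh^K_\ee$ exactly when $K_t$ is not dispersed by slicing in the relevant sense. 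I would make this precise by proving, for each $\xi$, that $t\in (\hhh^K_\ee)^\xi_w$ implies $K_t\cap s^\xi_{\ee'}(K)\neq\varnothing$ for suitable $\ee'<\ee$ (giving $(ii)\Rightarrow(i)$ after replacing $\ee$ by something smaller to absorb the loss), and conversely $s^\xi_\ee(K)\neq\varnothing$ implies $(\hhh^K_{\ee'})^\xi_w\neq\varnothing$ for suitable $\ee'<\ee$ (giving $(i)\Rightarrow(ii)$). Both are transfinite inductions on $\xi$; the successor step is the content, and limit steps are immediate from the intersection definitions on both sides.

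For the implication $(i)\Rightarrow(ii)$, I would argue the contrapositive-style statement: assuming $x^*\in s^\xi_\ee(K)$, build by induction on $\xi$ a sequence $t\in(\hhh^K_{\ee/2})^\xi_w$ (say) with $x^*\in K_t$. At a successor, given $x^*\in s^{\xi+1}_\ee(K)=s_\ee(s^\xi_\ee(K))$ and any $U$ in a fixed weak neighborhood basis at $0$, the point $x^*$ is not slice-isolated in $s^\xi_\ee(K)$, so there is $y^*\in s^\xi_\ee(K)$ with $y^*-x^*$ of norm $>\ee/2$ (shrinking the target $\ee$), hence $x\in U\cap B_X$ with $(y^*-x^*)(x)>\ee/2$; one then wants $y^*(x)\geqslant\ee/2$ and, crucially, that $t\con x$ is still in the relevant tube, i.e. $y^*\in K_{t\con x}$. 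This is where one must be slightly careful — the standard trick is to first translate so that $x^*=0$ is harmless only in the symmetric convex case, so instead I would work directly with the difference and use that the vectors appended only need witness $\ee'$-separation, invoking Remark 2.1 / the Hahn–Banach discussion to know that membership in $\hhh^K_{\ee'}$ is governed by the existence of a single witnessing functional. The inductive hypothesis applied to $y^*\in s^\xi_\ee(K)$ then supplies the tail of the tree above $t\con x$. Running this over all $U$ in the basis gives $t\in(\hhh^K_{\ee'})^{\xi+1}_w$.

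The implication $(ii)\Rightarrow(iii)$ is the soft direction: since $(K,\delta)\subset\mmm$ as directed sets and $\mmm$ is a weak neighborhood basis at $0$, any $D$ that is coarser (as a directed set, ordered by reverse inclusion) yields a derivation that proceeds \emph{at least as fast}, so $o_{(K,\delta)}(\hhh)\geqslant o_w(\hhh)$ once $\delta$ is small relative to $\ee$ — more precisely one checks $(\hhh)'_w\supseteq(\hhh)'_{(K,\delta)}$? No: one wants the $(K,\delta)$-derivative to be \emph{smaller}, so that its order is at least as large; since every $U\in(K,\delta)$ is a weak neighborhood of $0$, the condition defining $(\hhh)'_{(K,\delta)}$ quantifies over a subfamily of weak neighborhoods, hence $(\hhh)'_{(K,\delta)}\supseteq(\hhh)'_w$, giving $o_{(K,\delta)}\geqslant o_w$, which is exactly what $(iii)$ needs (choosing $\delta<\ee$ arbitrarily). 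Finally $(iii)\Rightarrow(i)$: the sets in $(K,\delta)$ are tailored to the functionals in $K$, and I expect the point of using $(K,\delta)$ rather than $\mmm$ is that it gives enough room to recover a genuine slicing — from a long $(K,\delta)$-branch one extracts, by $w^*$-compactness of the tubes $K_t$ and a diagonal/inductive argument up the ordinal, a nonempty $s^\xi_{\ee-2\delta}(K)$ (roughly), because appending $x\in\{z:\ |z^*(z)|<\delta\ \forall z^*\in F\}$ for a suitably chosen finite $F\subset K$ approximating the current tube ensures the new witnessing functional is $w^*$-close to the old one while gaining $\ee$ on the new coordinate, which is precisely the mechanism that produces non-slice-isolated points.

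The main obstacle I anticipate is the successor step of $(iii)\Rightarrow(i)$: one must extract, from the combinatorial fact ``for every finite $F\subset K$ there is $x$ with $|z^*(x)|<\delta$ on $F$ and a new witness $y^*\in K$ with $y^*(x)\geqslant\ee$ on the extended sequence'', the topological conclusion that the relevant tube has a point that is not $w^*$-slice-isolated at level $\ee-O(\delta)$. This requires choosing $F$ to be a finite $w^*$-$\delta$-net for the compact tube $K_t$ (or for its image under finitely many coordinate functionals) and then passing to a $w^*$-cluster point of the witnesses $y^*$; the bookkeeping of how the $\delta$'s and $\ee$'s degrade along a transfinite induction — and verifying they do not degrade at limit stages — is the delicate part, and is presumably why the theorem is stated with ``there exist $0<\delta<\ee$'' and a supremum over $\ee>\delta>0$ rather than with fixed constants.
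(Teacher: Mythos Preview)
Your $(ii)\Rightarrow(iii)$ is correct and matches the paper. Your $(iii)\Rightarrow(i)$ is essentially the paper's argument (the second inclusion $(\hhh^K_\ee)^\xi_{(K,\delta)}\subset\hhh^{s_\rho^\xi(K)}_\ee$ of the Minkowski-sum lemma), though you overcomplicate it: no $\delta$-net is needed, and the constants do \emph{not} degrade along the induction. The feature of $(K,\delta)$ that you gesture at but then fail to exploit is that any $w^*$-cluster point $f$ of the witnesses $f_U$ already lies in $K$, so the single set $\{x:|f(x)|<\delta\}$ is itself a member of $(K,\delta)$; hence $\limsup_U|f(x_U)|\leqslant\delta$ automatically, giving $\limsup\|f_U-f\|\geqslant\limsup f_U(x_U)-\delta\geqslant\ee-\delta>\rho$ with a single fixed $\rho<\ee-\delta$ throughout the transfinite induction.

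The genuine gap is in $(i)\Rightarrow(ii)$. From a single $y^*\in s^\xi_\ee(K)$ with $\|y^*-x^*\|>\ee/2$ you write ``hence $x\in U\cap B_X$ with $(y^*-x^*)(x)>\ee/2$'', but the norm only supplies \emph{some} $x\in B_X$, not one in a prescribed weak neighborhood $U$ of $0$ in $X$. The paper's remedy is to first build an entire tree of functionals $(f_\beta)_{\beta\in\bbb_\xi}$ indexed by $w^*$-neighborhoods, so that at each node one has a net $f_{(t,\sigma_0\cat V)}\to_{w^*} f_\beta$ with $\|f_{(t,\sigma_0\cat V)}-f_\beta\|>\ee/2$ for every $V$, together with witnessing vectors $y_V\in B_X$. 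One then passes to a subnet on which all differences $y_{V_2}-y_{V_1}$ lie in $U$, fixes $V_1$, and chooses $V_2$ far enough along the net that $(f_{(t,\sigma_0\cat V_2)}-f_\beta)(y_{V_1})$ is small; the vector $x_\alpha:=(y_{V_2}-y_{V_1})/2$ then lies in $U$ and satisfies $f_{(t,\sigma_0\cat V_2)}(x_\alpha)>\ee/4-\ee_n$. This difference trick is the missing idea. One must also track the accumulated errors $\sum_n\ee_n<\ee/4$ along each branch so that the current functional still dominates all previously chosen vectors; your sketch addresses neither point.
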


Note that for $\xi=0$, each of the three conditions above is always true, and so that case follows.  We will only consider the non-trivial case $\xi>0$.  

Of course, since $(K, \delta) \subset \mmm$, for any hereditary tree $\hhh$ on $X$ and any $\xi\in \ord$, $$(\hhh)_w^\xi \subset  (\hhh)^\xi_{(K, \delta)},$$ whence $$o_w(\hhh)\leqslant  o_{(K,\delta)}(\hhh).$$  Thus $(ii)\Rightarrow (iii)$ of Theorem \ref{main theorem} is trivial.  We require some preliminaries for the remaining implications.

\section{Minimal structures}   In \cite{C}, the following trees were introduced.  We let $$\mt_0=\{\varnothing\},$$ $$\mt_{\xi+1}=\{\varnothing\}\cup \{(\xi+1)\cat t: t\in \mt_\xi\},$$ and if $\mt_\zeta$ has been defined for each $\zeta<\xi$, $\xi$ a limit ordinal, we let $$\mt_\xi=\bigcup_{\zeta<\xi} \mt_{\zeta+1}.$$  We also let $\ttt_\xi=\mt_\xi\setminus \{\varnothing\}$. Note that if $\xi$ is a limit ordinal, $\ttt_\xi=\cup_{\zeta<\xi}\ttt_{\zeta+1}$ is a totally incomparable union, since every member of $\ttt_{\zeta+1}$ is an extension of $(\zeta+1)$.   The following modification will be the primary tool of this work.  Given $D\subset 2^\Lambda$ and $\xi\in \ord$, we let $$\mt_\xi^D= \{(t, \sigma)\in ([1,\xi]\times D)^{<\nn}: t\in \mt_\xi\}$$ and $$\ttt_\xi^D= \{(t, \sigma)\in ([1,\xi]\times D)^{<\nn}: t\in \ttt_\xi\}.$$ Note that $\mt^D_\xi$ is a tree on $[1, \xi]\times D$ and $\ttt_\xi^D$ is a $B$-tree on $[1, \xi]\times D$.  If $D=\{\Lambda\}$, $\ttt_\xi^D$ is naturally isomorphic as a $B$-tree to $\ttt_\xi$.  Just as the trees $\ttt_\xi$ have been used to witness the order $o(T)$ of a tree $T$ \cite{C, BCF}, the tree $\ttt_\xi^D$ can naturally and easily be used to measure the order $o_D(\hhh)$ of a hereditary tree $\hhh$.  We observe that for any $0\leqslant \zeta\leqslant \xi$, $$(\mt_\xi^D)^\zeta=\{(t, \sigma)\in \mt_\xi^D: t\in \mt_\xi^\zeta\}$$ and $$(\ttt^D_\xi)^\zeta=\{(t, \sigma)\in \ttt_\xi^D: t\in \ttt_\xi^\zeta\}.$$ These statements can be verified easily by induction.  In particular, $\mt_\xi^D$ and $\ttt_\xi^D$ are well-founded and $o(\ttt_\xi^D)= o(\ttt_\xi)=\xi$ \cite{C}.

Recall that if $X$ is an understood Banach space, $\mmm$ and $\nnn$ are fixed weak and $w^*$ neighborhood bases at $0\in X$ and $0\in X^*$, respectively.  Because we will be frequently using $\ttt_\xi^\mmm$ and $\mt_\xi^\nnn$, we let $\aaa_\xi=\ttt_\xi^\mmm$ and $\bbb_\xi=\mt^\nnn_\xi$.   

The following is a modification of the corresponding result from \cite{AJO} to the non-separable case.  

\begin{lemma} Fix $K\subset X^*$ $w^*$ compact, $\ee>0$, $x^*\in X^*$, and $\xi\in \emph{\ord}$.  Then if $x^*\in s_\ee^\xi(K)$, there exists $(f_\beta)_{\beta\in \bbb_\xi}\subset K$ so that \begin{enumerate}[(i)]\item $f_\varnothing = x^*$, \item for each $t\in \ttt_\xi$, $\sigma\in \nnn^{<\nn}$ with $|\sigma|+1=|t|$, and each $U\in \nnn$, $\|f_{(p(t), \sigma)} - f_{(t, \sigma\cat U)}\|>\ee/2,$ and $f_{(p(t), \sigma)} - f_{(t, \sigma\cat U)}\in U.$  \end{enumerate}

\label{lemma1}

\end{lemma}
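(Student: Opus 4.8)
The plan is a transfinite induction on $\xi$, driven by the following elementary observation, which is the only place the slicing definition of $s_\ee$ will be invoked: \emph{if $L\subset X^*$ is $w^*$ compact and $y^*\in s_\ee(L)$, then for every $U\in\nnn$ there is $g\in L$ with $\|y^*-g\|>\ee/2$ and $y^*-g\in U$.} To see this, write $U=\{z^*:|z^*(x)|<\delta\ \forall x\in F\}$ for a finite $F\subset X$ and $\delta>0$, put $\tfrac12 U=\{z^*:|z^*(x)|<\delta/2\ \forall x\in F\}$, and note that $W:=y^*+\tfrac12 U$ is a $w^*$ open set containing $y^*$. By definition of $s_\ee$, $\text{diam}(L\cap W)>\ee$, so there are $a^*,b^*\in L\cap W$ with $\|a^*-b^*\|>\ee$, hence $\max\{\|a^*-y^*\|,\|b^*-y^*\|\}>\ee/2$; since the members of $\nnn$ are symmetric and convex, $y^*-g\in\tfrac12 U\subset U$ for whichever of $g\in\{a^*,b^*\}$ realizes this maximum.

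For $\xi=0$ the lemma is trivial, since $\bbb_0=\{\varnothing\}$, $s_\ee^0(K)=K$, and $f_\varnothing:=x^*$ works with (ii) vacuous. For $\xi=\eta+1$, apply the observation with $L=s_\ee^\eta(K)$ (which is $w^*$ compact) to get, for each $U\in\nnn$, a point $g_U\in s_\ee^\eta(K)$ with $\|x^*-g_U\|>\ee/2$ and $x^*-g_U\in U$; then the inductive hypothesis applied to each $g_U$ yields a family $(f^U_\beta)_{\beta\in\bbb_\eta}\subset K$ with $f^U_\varnothing=g_U$ satisfying (ii) at level $\eta$. Using $\mt_{\eta+1}=\{\varnothing\}\cup\{(\eta+1)\cat t:t\in\mt_\eta\}$ --- so that every non-empty $\beta\in\bbb_{\eta+1}$ is uniquely of the form $((\eta+1)\cat t,\,U\cat\sigma)$ with $(t,\sigma)\in\bbb_\eta$ and $U\in\nnn$ --- define $f_\varnothing:=x^*$ and $f_{((\eta+1)\cat t,\,U\cat\sigma)}:=f^U_{(t,\sigma)}$. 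This grafts a copy of the $\eta$-family below each immediate successor $((\eta+1),U)$ of the root, with the copy's root $g_U=f^U_\varnothing$ identified with $((\eta+1),U)$. Verifying (ii) then splits into the boundary edges $\varnothing\to((\eta+1),V)$, for which the difference is $x^*-g_V$ and the conclusion is exactly the observation, and all other edges, which lie inside one grafted copy and for which the conclusion is exactly (ii) for that $(f^U_\beta)$.

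For $\xi$ a limit ordinal, $x^*\in s_\ee^{\zeta+1}(K)$ for all $\zeta<\xi$, so the inductive hypothesis at each $\zeta+1$ furnishes $(f^{(\zeta)}_\beta)_{\beta\in\bbb_{\zeta+1}}\subset K$ with $f^{(\zeta)}_\varnothing=x^*$ and (ii) at level $\zeta+1$. Since $\mt_\xi=\bigcup_{\zeta<\xi}\mt_{\zeta+1}$ and non-empty members of distinct $\mt_{\zeta+1}$ are incomparable (they begin with distinct ordinals $\zeta+1$), $\bbb_\xi\setminus\{\varnothing\}=\bigsqcup_{\zeta<\xi}(\bbb_{\zeta+1}\setminus\{\varnothing\})$; setting $f_\varnothing:=x^*$ and $f_\beta:=f^{(\zeta)}_\beta$ on $\bbb_{\zeta+1}\setminus\{\varnothing\}$ patches the families consistently (this uses $f^{(\zeta)}_\varnothing=x^*$ for all $\zeta$), and since each edge of $\ttt_\xi$ lies in a single $\ttt_{\zeta+1}$, (ii) for the patched family is inherited from (ii) for the corresponding $(f^{(\zeta)}_\beta)$.

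The routine-but-delicate step is the successor case: one must check that $\beta\mapsto((\eta+1)\cat t,\,U\cat\sigma)$ is a bijection of $\nnn\times\bbb_\eta$ onto $\bbb_{\eta+1}\setminus\{\varnothing\}$ and that under it the parent map and the $\nnn$-labels line up, so that the two cases of (ii) genuinely reduce as claimed. No serious obstacle is expected beyond this bookkeeping; the observation and the limit step are immediate.
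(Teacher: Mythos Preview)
Your proof is correct and follows essentially the same approach as the paper's: transfinite induction on $\xi$, with the successor step driven by the observation that $y^*\in s_\ee(L)$ yields, for each $U\in\nnn$, some $g\in L$ with $\|y^*-g\|>\ee/2$ and $y^*-g\in U$, followed by grafting the inductively obtained families below each $g_U$; the limit step patches the families over the incomparable union $\ttt_\xi=\bigcup_{\zeta<\xi}\ttt_{\zeta+1}$. The only cosmetic difference is that you isolate the observation up front and use $\tfrac12 U$ (harmless but unnecessary, since $\nnn$ is symmetric and $x^*_U\in x^*+U$ already gives $x^*-x^*_U\in U$), whereas the paper works with $x^*+U$ directly inside the successor step.
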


Recall that for $t\in \ttt_\xi$, $p(t)$ denotes the largest proper initial segment of $t$.  Then the collection $\{(t, \sigma\cat U):U\in \nnn\}$ is the set of all minimal proper extensions of $(p(t), \sigma)$ in $\bbb_\xi$.  

\begin{proof} By induction on $\xi$.  The $\xi=0$ case is trivial, since $\bbb_0=\{\varnothing\}$.  

Suppose the result holds for a given $\xi$.  Then if $x^*\in s_\ee^{\xi+1}(K)$, for each $U\in \nnn$, $\text{diam}(s_\ee^\xi(K)\cap (x^*+U))>\ee$.  This means we can find $g_U, h_U\in s^\xi_\ee(K)\cap (x^*+U)$ so that $\|h_U-g_U\|>\ee$.  Then we can choose $x^*_U\in \{g_U, h_U\}$ so that $\|x^*_U-x^*\|>\ee/2$.  This means $x^*_U\in s_\ee^\xi(K)$ and $x^*_U-x^*\in U$.  By the inductive hypothesis, for each $U\in \nnn$, there exists $(f^U_\beta)_{\beta\in \bbb_\xi}$ satisfying properties $(i)$-$(iii)$ with $x^*$ replaced by $x^*_U$.  We define $(f_\beta)_{\beta\in \bbb_{\xi+1}}$ as follows: Let $f_\varnothing = x^*$.  For $t\in \ttt_{\xi+1}$, we can write $t=(\xi+1)\cat s$ for some $s\in \mt_\xi$.  Then for such $t$, and for $\sigma\in \nnn^{<\nn}$ with $|\sigma|=|s|$, we let $$f_{(\xi+1, U)\cat(s, \sigma)}= f^U_{(s, \sigma)}.$$  It is straightforward to check that the requirements are satisfied.  

Last, assume the result holds for every $\zeta<\xi$, $\xi$ a limit ordinal.  If $x^*\in s_\ee^\xi(K)$, then $x^*\in s_\ee^{\zeta+1}(K)$ for each $\zeta<\xi$.  This means that for each $\zeta<\xi$, we can choose $(f^\zeta_\beta)_{\beta\in \bbb_{\zeta+1}}$ to satisfy $(i)$-$(iii)$ with $f^\zeta_\varnothing = x^*$ for each $\zeta<\xi$.  Then let $f_\varnothing = x^*$ and for $t\in \ttt_\xi$, note that since $\ttt_\xi=\cup_{\zeta<\xi}\ttt_{\zeta+1}$ is a disjoint union, $t\in \ttt_{\zeta+1}$ for some unique $\zeta$.  Then let $f_{(t, \sigma)}= f^\zeta_{(t, \sigma)}$.  Again, $(f_\beta)_{\beta \in \bbb_\xi}$ clearly satisfies the requirements.

\end{proof}

We remark here that the following slight improvement suggests itself.  It is an easy modification of the above method, and it will not be used in the sequel, so we omit the proof.  It is, however, an example of the flexibility of our method for constructing minimal trees.  

\begin{lemma} Let $\ccc_\xi=\mt_\xi^{\nnn\times \{\pm 1\}}$. Fix $\ee>0$.  For $K\subset X^*$ $w^*$ compact and $x^*\in X^*$, $x^*\in s_\ee^\xi(K)$ if and only if there exists $(x^*_\beta)_{\beta \in \ccc_\xi}$ so that \begin{enumerate}[(i)]\item $x^*_\varnothing=x^*$, \item for $t\in \ttt_\xi$, and $\beta= (t, \sigma, \ee)\in \ccc_\xi$, $x^*_{p(\beta)}- x^*_\beta\in V$, \item for $t\in \ttt_\xi$, $\sigma\in \nnn^{<\nn}$, $\tau\in \{\pm 1\}^{<\nn}$ with $|t|=|\sigma|+1=|\tau|+1$ and $V\in \nnn$, $$\|x^*_{(\tau, \sigma\cat V, \tau\cat 1)}- x^*_{(\tau, \sigma \cat V, \tau\cat -1)}\|> \ee.$$

\end{enumerate}
\end{lemma}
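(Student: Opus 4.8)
The plan is to prove the two halves of the ``if and only if'' separately, each by transfinite induction on $\xi$. Throughout I identify a member of $\ccc_\xi=\mt_\xi^{\nnn\times\{\pm1\}}$ with a triple $(t,\sigma,\tau)$, where $t\in\mt_\xi$, $\sigma\in\nnn^{<\nn}$, $\tau\in\{\pm1\}^{<\nn}$, $|t|=|\sigma|=|\tau|$, and I require the collection $(x^*_\beta)$ to lie in $K$ (exactly as in Lemma~\ref{lemma1}); with this convention the ``only if'' direction refines Lemma~\ref{lemma1}, while the ``if'' direction is the genuinely new content.

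\emph{$x^*\in s_\ee^\xi(K)$ implies such a collection in $K$.} I would run the induction exactly as for Lemma~\ref{lemma1}, the sole change being that where that proof selected one of the two points witnessing a diameter $>\ee$, I keep both and record them in the ``$+1$'' and ``$-1$'' subtrees. In the successor step, $x^*\in s_\ee^{\xi+1}(K)=s_\ee\big(s_\ee^\xi(K)\big)$ supplies, for each $U\in\nnn$, points $g_U,h_U\in s_\ee^\xi(K)\cap(x^*+U)$ with $\|g_U-h_U\|>\ee$; applying the inductive hypothesis to $g_U$ and to $h_U$ yields $(g^U_\gamma)_{\gamma\in\ccc_\xi},(h^U_\gamma)_{\gamma\in\ccc_\xi}\subset K$ rooted at $g_U,h_U$, and I set $x^*_\varnothing=x^*$ and, for a non-root node written as $\big((\xi+1)\cat s,\,U\cat\sigma',\,(\pm1)\cat\tau'\big)$, the value $g^U_{(s,\sigma',\tau')}$ if the head sign is $+1$ and $h^U_{(s,\sigma',\tau')}$ if it is $-1$. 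Then (i) is immediate; (ii) along the new root edges is $x^*-g_U,\ x^*-h_U\in U$, valid since members of $\nnn$ are symmetric; (ii) along deeper edges and (iii) for deeper sibling pairs are inherited from the grafted collections; and (iii) for the root-level pair is $\|g_U-h_U\|>\ee$. The limit step is verbatim as in Lemma~\ref{lemma1}: since $\ttt_\xi=\bigcup_{\zeta<\xi}\ttt_{\zeta+1}$ is a totally incomparable union, glue the collections arising from $x^*\in s_\ee^{\zeta+1}(K)$, $\zeta<\xi$.

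\emph{The converse.} Given $(x^*_\beta)_{\beta\in\ccc_\xi}\subset K$ satisfying (i)--(iii), I would prove by transfinite induction on $\zeta\le\xi$ the statement: \emph{if $\beta=(t,\sigma,\tau)\in\ccc_\xi$ has $t\in(\mt_\xi)^\zeta$, then $x^*_\beta\in s_\ee^\zeta(K)$} — this is the right statement in view of the identity $(\mt_\xi^D)^\zeta=\{(t,\rho)\in\mt_\xi^D:t\in(\mt_\xi)^\zeta\}$ recorded above. The case $\zeta=0$ is the hypothesis $x^*_\beta\in K$; a limit $\zeta$ follows from $(\mt_\xi)^\zeta=\bigcap_{\eta<\zeta}(\mt_\xi)^\eta$ and $s_\ee^\zeta(K)=\bigcap_{\eta<\zeta}s_\ee^\eta(K)$. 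For the successor step, $t\in(\mt_\xi)^{\zeta+1}=\big((\mt_\xi)^\zeta\big)'$ has a proper extension in the tree $(\mt_\xi)^\zeta$, so since that tree is downward closed, $t\cat(m)\in(\mt_\xi)^\zeta$ for some single entry $m$; then for every $U\in\nnn$ the nodes $\big(t\cat(m),\,\sigma\cat(U),\,\tau\cat(\pm1)\big)$ lie in $\ccc_\xi$ with first coordinate in $(\mt_\xi)^\zeta$, hence their values lie in $s_\ee^\zeta(K)$ by the inductive hypothesis; by (ii) together with symmetry of $U$ both values lie in $x^*_\beta+U$, and by (iii) they are at distance $>\ee$. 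Thus $\mathrm{diam}\big(s_\ee^\zeta(K)\cap(x^*_\beta+U)\big)>\ee$ for all $U\in\nnn$, and since $\{x^*_\beta+U:U\in\nnn\}$ is a $w^*$-neighbourhood basis at $x^*_\beta$, the same holds with $x^*_\beta+U$ replaced by any $w^*$-open $W\ni x^*_\beta$; combined with $x^*_\beta\in s_\ee^\zeta(K)$ (the $\zeta$-stage conclusion, available since $(\mt_\xi)^{\zeta+1}\subseteq(\mt_\xi)^\zeta$) this gives $x^*_\beta\in s_\ee\big(s_\ee^\zeta(K)\big)=s_\ee^{\zeta+1}(K)$. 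Finally, $o(\ttt_\xi)=\xi$ and $\mt_\xi=\ttt_\xi\cup\{\varnothing\}$ force $\varnothing\in(\mt_\xi)^\xi$, so taking $\zeta=\xi$, $\beta=\varnothing$ gives $x^*=x^*_\varnothing\in s_\ee^\xi(K)$.

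I do not expect a real conceptual obstacle — the authors themselves call the forward half an easy modification of Lemma~\ref{lemma1} — so the step needing care is purely the indexing bookkeeping: writing each index of $\ccc_{\xi+1}$ as a head $(U,\pm1)$ followed by an index of $\ccc_\xi$ and checking that (ii), (iii) for the spliced collection reduce to the same properties of the inductive collections, and using the symmetry of the members of $\nnn$ to pass from ``$x^*_{p(\beta)}-x^*_\beta\in U$'' of (ii) to ``$x^*_\beta\in x^*_{p(\beta)}+U$'', which is what drives the diameter estimate in the converse. It is also worth recording explicitly that the collection must be taken inside $K$; without that restriction the converse fails, since one can build arbitrarily tall collections in $X^*$ obeying (i)--(iii).
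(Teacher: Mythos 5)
Your proof is correct, and it takes precisely the route the paper intends: the paper omits its own argument, describing it only as ``an easy modification of the above method,'' and your forward direction is exactly that modification of the proof of Lemma \ref{lemma1} (keeping both witnesses $g_U,h_U$ and grafting them into the $\pm 1$ subtrees), while your converse is the natural transfinite induction using $(\mt_\xi^D)^\zeta=\{(t,\rho)\in\mt_\xi^D: t\in \mt_\xi^\zeta\}$. Your explicit correction of the statement's typos, in particular that the family $(x^*_\beta)$ must lie in $K$ (as in Lemma \ref{lemma1}) for the ``if'' direction to hold, is the right reading and is needed.
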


The following should be compared to Proposition $5$ of \cite{OSZ}.  

\begin{lemma} Let $\Lambda$ be a non-empty set.  For any hereditary tree $\hhh\subset \Lambda^{<\nn}$, $\varnothing \neq D\subset 2^\Lambda$, and $\xi\in \emph{\ord}$, the following are equivalent: 
\begin{enumerate}[(i)]\item $o_D(\hhh)>\xi$, \item there exists $(x_\alpha)_{\alpha\in \ttt_\xi^D}\subset \Lambda$ so that for each $\tau\in \ttt_\xi^D$, $(x_{\tau|_i})_{i=1}^{|\tau|}\in \hhh$ and for each $t\in \ttt_\xi$ and $\sigma=(U_1, \ldots, U_n)\in D^{<\nn}$ with $n=|t|$, $x_{(t, \sigma)}\in U_n$.  \end{enumerate}

\label{lemma2}

\end{lemma}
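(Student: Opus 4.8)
The proof is a transfinite induction on $\xi$, mirroring the standard argument that $o(\ttt_\xi)=\xi$ but carrying along the directed set $D$ and the constraint that the $n$-th coordinate of each branch-point must land in $U_n$. Recall from the excerpt that $(\ttt_\xi^D)^\zeta = \{(t,\sigma)\in \ttt_\xi^D: t\in \ttt_\xi^\zeta\}$, so the order structure of $\ttt_\xi^D$ is completely governed by that of $\ttt_\xi$, which satisfies $o(\ttt_\xi)=\xi$.

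\textbf{Plan for $(ii)\Rightarrow(i)$.} Given a family $(x_\alpha)_{\alpha\in\ttt_\xi^D}$ as in $(ii)$, I would prove by induction on $\zeta\leqslant\xi$ the statement: for every $(t,\sigma)\in(\ttt_\xi^D)^\zeta$, the sequence $(x_{(t,\sigma)|_i})_{i=1}^{|t|}$ lies in $(\hhh)_D^\zeta$, where I interpret the empty sequence appropriately at $\zeta=0$ (giving $\varnothing\in(\hhh)_D^0=\hhh$). For the successor step, suppose $(t,\sigma)\in(\ttt_\xi^D)^{\zeta+1}$, so $t\in\ttt_\xi^{\zeta+1}=(\ttt_\xi^\zeta)'$, meaning $t$ has a proper extension in $\ttt_\xi^\zeta$. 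Fix any $U\in D$; since $\mt_\xi^D$ (hence $\ttt_\xi^D$) contains $(t,\sigma)\cat((\eta,U))$ for an appropriate label $\eta$ whenever $t\cat(\eta)\in\ttt_\xi$, and one can choose $\eta$ so that $t\cat(\eta)\in\ttt_\xi^\zeta$, the node $(t,\sigma)\cat((\eta,U))$ lies in $(\ttt_\xi^D)^\zeta$; by induction its associated branch is in $(\hhh)_D^\zeta$, and the constraint in $(ii)$ forces $x_{(t,\sigma)\cat((\eta,U))}\in U$. Thus the branch of $(t,\sigma)$ extends within $(\hhh)_D^\zeta$ by an element of $U$, for every $U\in D$; hence that branch lies in $((\hhh)_D^\zeta)'_D=(\hhh)_D^{\zeta+1}$. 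The limit step is immediate from the intersection definitions on both sides. Applying this at $\zeta=\xi$ to the root gives $\varnothing\in(\hhh)_D^\xi$, so $(\hhh)_D^\xi\neq\varnothing$ and $o_D(\hhh)>\xi$.

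\textbf{Plan for $(i)\Rightarrow(ii)$.} Assuming $o_D(\hhh)>\xi$, i.e. $(\hhh)_D^\xi\neq\varnothing$ (so in particular $\varnothing\in(\hhh)_D^\xi$ since $\hhh$ is hereditary and downward-closedness of all the derived trees is preserved), I would construct $(x_\alpha)$ by recursion on the tree $\ttt_\xi^D$, maintaining the invariant that for each $\alpha=(t,\sigma)\in\ttt_\xi^D$ with $t\in\ttt_\xi^\zeta$ (where $\zeta$ is determined by $t$, namely $\zeta$ is the largest ordinal with $t\in\ttt_\xi^\zeta$), the branch $(x_{\alpha|_i})_{i=1}^{|\alpha|}$ lies in $(\hhh)_D^\zeta$. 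Because of the explicit description $\mt_{\xi+1}=\{\varnothing\}\cup\{(\xi+1)\cat t: t\in\mt_\xi\}$ (and the limit-stage disjoint union), the minimal proper extensions of a node $(t,\sigma)$ in $\ttt_\xi^D$ are exactly the pairs $(t,\sigma)\cat((\eta,U))$ with $U\in D$ ranging freely and $\eta$ the single admissible label making $t\cat(\eta)\in\ttt_\xi$; moreover $t\cat(\eta)\in\ttt_\xi^{\zeta-1}$ drops the rank by exactly one at successor stages and one checks the analogous drop at limit stages. When the current branch $b=(x_{\alpha|_i})_i$ lies in $(\hhh)_D^\zeta=((\hhh)_D^{\zeta-1})'_D$, the definition of $(\cdot)'_D$ says precisely that for every $U\in D$ there is $x\in U$ with $b\cat x\in(\hhh)_D^{\zeta-1}$; choose such an $x$ and set $x_{(t,\sigma)\cat((\eta,U))}:=x$. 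This simultaneously secures membership in $U$ (the constraint of $(ii)$) and propagates the invariant to rank $\zeta-1$. At limit-rank nodes, $t\in\ttt_\xi^\zeta=\bigcap_{\eta<\zeta}\ttt_\xi^\eta$ forces $b\in(\hhh)_D^\zeta=\bigcap_{\eta<\zeta}(\hhh)_D^\eta$, and any single successor step as above lands us at some rank $<\zeta$, which is exactly what the tree structure requires. Running this recursion through all of $\ttt_\xi^D$ yields the desired family.

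\textbf{Main obstacle.} The delicate point is bookkeeping the exact correspondence between the rank of a node $(t,\sigma)$ in $\ttt_\xi^D$ and the derivative level $(\hhh)_D^\zeta$ its branch inhabits, together with verifying that the minimal proper extensions in $\ttt_\xi^D$ are precisely ``$U\in D$ free, label $\eta$ forced,'' so that the universal quantifier over $U$ in the definition of $(\cdot)'_D$ matches the family of minimal extensions. Both directions hinge on the identity $(\ttt_\xi^D)^\zeta=\{(t,\sigma): t\in\ttt_\xi^\zeta\}$ (stated in the excerpt) and on $o(\ttt_\xi)=\xi$; once those are in hand the inductions are routine but must be set up carefully at limit stages, where one should note that membership of a branch in all lower derivatives is equivalent to membership of the corresponding $\mt$-node in all lower $\ttt_\xi^\eta$'s, so no extra choices are needed at limit nodes — the values were already assigned at their (successor-rank) predecessors. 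I would also remark, as the paper does for $\ttt_\xi^D$ versus $\ttt_\xi$, that everything relativizes verbatim to $B$-trees, which is the form actually used since $\ttt_\xi^D$ is a $B$-tree.
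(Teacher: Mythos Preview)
Your $(ii)\Rightarrow(i)$ argument is essentially the paper's: a straightforward induction on $\zeta\leqslant\xi$ showing that branches indexed by nodes in $(\mt_\xi^D)^\zeta$ land in $(\hhh)_D^\zeta$, and then reading off $\varnothing\in(\hhh)_D^\xi$.

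For $(i)\Rightarrow(ii)$ the paper organizes the argument differently: it inducts on $\xi$, using the recursive description $\ttt_{\xi+1}=\{(\xi+1)\cat t: t\in\mt_\xi\}$ directly. From $\varnothing\in(\hhh)_D^{\xi+1}$ one picks, for each $U\in D$, some $x_U\in U$ with $o_D(\hhh(x_U))>\xi$, applies the inductive hypothesis to each $\hhh(x_U)$ to get a $D$ tree indexed by $\ttt_\xi^D$, and sets $x_{(\xi+1,U)\cat(t,\sigma)}=x^U_{(t,\sigma)}$; the limit case patches together the totally incomparable union $\ttt_\xi=\bigcup_{\zeta<\xi}\ttt_{\zeta+1}$. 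Your node-by-node recursion along $\ttt_\xi^D$ maintaining a rank invariant is a legitimate reorganization of the same idea, and in fact unwinds to the same choices.

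There is, however, one factual error in your description of the tree. It is \emph{not} true that a non-maximal node $(t,\sigma)\in\ttt_\xi^D$ has immediate successors $(t,\sigma)\cat(\eta,U)$ for a \emph{single} admissible label $\eta$. If the rank of $t$ in $\ttt_\xi$ is a limit ordinal $\mu$ (equivalently, $\ttt_\xi(t)=\mt_\mu$ with $\mu$ a limit), then $t$ has immediate successors $t\cat(\nu+1)$ for \emph{every} $\nu<\mu$, each of rank $\nu$. Your limit-stage paragraph gestures at this, but it contradicts the earlier ``single admissible label'' claim, and the notation ``$\zeta-1$'' for the rank drop is undefined precisely at those nodes. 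The fix is painless and fits your framework: since the branch $b$ at $(t,\sigma)$ lies in $(\hhh)_D^\mu\subset(\hhh)_D^{\nu+1}$ for every $\nu<\mu$, the definition of $(\cdot)'_D$ supplies, for each pair $(\nu,U)$ with $\nu<\mu$ and $U\in D$, an $x\in U$ with $b\cat x\in(\hhh)_D^\nu$, which is exactly what your invariant demands at the child $(t\cat(\nu+1),\sigma\cat U)$. With that correction your argument is complete.
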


We will call a collection $(x_\tau)_{\tau\in \ttt^D_\xi}$ satisfying the conditions of $(ii)$ a $D$ tree in $\hhh$.  

Before we begin the proof, we recall that for $(t_1,\sigma_1), (t_2, \sigma_2)\in (\Lambda_1\times \Lambda_2)^{<\nn}$, we interchangeably use $(t_1, \sigma_1)\cat (t_2, \sigma_2)$ and $(t_1\cat t_2, \sigma_1\cat \sigma_2)$ to denote the same sequence in $(\Lambda_1\times \Lambda_2)^{<\nn}$.  

\begin{proof} It is a trivial induction argument to show that if $(x_\tau)_{\tau\in \ttt_\xi^D}$ is as in $(ii)$, then for each $0\leqslant \zeta\leqslant \xi$ and each $\tau\in (\mt_\xi^D)^\zeta$, $(x_{\tau|_i})_{i=1}^{|\tau|}\in (\hhh)_D^\zeta$.  Since $o(\mt^D_\xi)= o(\mt_\xi)=\xi+1$, this means $\varnothing\in (\hhh)_D^\xi$, and $o_D(\hhh)>\xi$.   We prove the other direction by induction.  The $\xi=0$ case is trivial.  If $o_D(\hhh)>\xi+1$, then $\varnothing \in (\hhh)^{\xi+1}_D=((\hhh)_D^\xi)_D'$. This means there exists $(x_U)_{U\in D}$ so that $x_U\in U$ and $o_D(\hhh(x_U))>\xi$.  By the inductive hypothesis, we can find for each $U\in D$ some $(x^U_\tau)_{\tau\in \ttt^D_\xi}$ satisfying the conclusions with $\hhh$ replaced by $\hhh(x_U)$.  We let $$x_{(\xi+1, U)}= x_U$$ and $$x_{(\xi+1, U)\cat( t,  \sigma)}= x^U_{(t, \sigma)}$$ for each $t\in \ttt_\xi$ and $\sigma\in D^{<\nn}$ with $|t|=|\sigma|$.  Assume the result holds for each $\zeta<\xi$, $\xi$ a limit ordinal.  If $o_D(\hhh)>\xi$, then $o_D(\hhh)>\zeta+1$ for each $\zeta<\xi$.  Then by the inductive hypothesis, we find $(x^\zeta_\tau)_{\tau\in \ttt^D_{\zeta+1}}$ satisfying the conclusions of $(ii)$.  For $\tau\in \ttt^D_\xi$, there exists a unique $\zeta<\xi$ so that $\tau\in \ttt_{\zeta+1}^D$, and we let $x_\alpha= x^\zeta_\alpha$.  Clearly $(x_\tau)_{\tau\in \ttt^D_\xi}$ satisfies the requirements.  

\end{proof}

In the sequel, if $D$ is a directed set with order $\leqslant$, we will say a function $\theta:\ttt^D_\zeta\to \ttt^D_\xi$ is \emph{nice} provided $\theta$ is monotone, and if $\alpha=(t, (U_1, \ldots, U_m))$ and $\theta(\alpha)= (t_0, (V_1, \ldots, V_n))$, $U_m\leqslant V_n$.  If $\hhh$ is a hereditary tree on $\Lambda$, $D\subset 2^\Lambda$, $(x_\tau)_{\tau\in \ttt_\xi}\subset \Lambda$ is a $D$ tree in $\hhh$, and $\theta:\aaa_\zeta\to \aaa_\xi$ is nice, $(x_{\theta(\alpha)})_{\alpha \in \aaa_\zeta}$ is also a $D$ tree in $\hhh$. Recall that $\mmm$ and $(K, \delta)$ are directed sets ordered by reverse inclusion, and for any set $I$, the set of finite subsets $[I]^{<\nn}$ of $I$ is directed by inclusion.

For later use, we will prove the following lemma concerning Minkowski sums, from which the remaining part of the proof of Theorem \ref{main theorem} will follow.

\begin{lemma}  Let $X$ be a Banach space.  Let $K, L\subset X^*$ be non-empty, $w^*$ compact subsets of $X^*$.  \begin{enumerate}[(i)]\item Suppose that $s\in \hhh^K_\ee$, $0<\ee_0<\ee/4$, and $f\in s^\xi_\ee(L)$ are such that $|f(x)|<\ee_0$ for all $x\in s$.  Then $s\in (\hhh^{K+L}_{\ee_0})^\xi_w$.  \item For any $0<\delta<\ee$ and $0<\rho<\ee-\delta$, $(\hhh^L_\ee)^\xi_w \subset \hhh^{s_\delta^\xi(L)}_\ee$ and $(\hhh^L_\ee)^\xi_{(K, \delta)} \subset \hhh^{s_\rho^\xi(L)}_\ee$.  \item If $o_w(\hhh^K_{\ee_1})>\xi$ and $o_w(\hhh^L_{\ee_1})>\zeta$, then for any $0<\ee_0<\ee_1/4$, $o_w(\hhh^{K+L}_{\ee_0})>\zeta+\xi$.  

\end{enumerate}

\label{lemma coup de grace}
\end{lemma}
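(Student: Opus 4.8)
The plan is to prove the three parts in order, with (iii) being an immediate consequence of (i) and (ii). For part (i), I would argue by transfinite induction on $\xi$. The base case $\xi = 0$ is just the hypothesis $s \in \hhh^K_\ee \subset \hhh^{K+L}_{\ee_0}$ (note that if $x^* \in K$ witnesses $x^*(x) \geqslant \ee$ for all $x \in s$, and $g \in L$ is arbitrary, then $(x^* + g)(x) \geqslant \ee - \ee_0 > \ee_0$ since $\ee_0 < \ee/4$, so in fact $s \in \hhh^{K+L}_{\ee_0}$ with room to spare). For the successor step, suppose the claim holds for $\xi$ and $f \in s^{\xi+1}_\ee(L)$ with $|f(x)| < \ee_0$ on $s$. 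Since $f \in s_\ee(s^\xi_\ee(L))$, for every weak neighborhood $U$ of $0$ in $X$ the set $s^\xi_\ee(L) \cap (f + U^\circ)$ (where $U^\circ$ is a suitable $w^*$-neighborhood dual to $U$) has diameter exceeding $\ee$; hence we can find $g_U \in s^\xi_\ee(L)$ with $\|g_U - f\| > \ee/2$ and $g_U - f$ annihilating a prescribed finite set — more precisely, using that $f$ is small on $s$ and choosing the $w^*$-neighborhood to control the values on $s$, we can arrange $|g_U(x) - f(x)|$ small, say less than $\ee_0$, on $s$, and also arrange that the new coordinate $x$ we wish to append lies in $U$ and that $g_U(x)$ is small. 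The point is that $s \con x \in \hhh^K_\ee$ for a suitable $x \in U$ (this is where we use that $s$ extends inside $\hhh^K_\ee$; actually we need to be careful — we want to append a vector from $K$'s witnessing data, so I would instead run the induction on the statement "for every $x \in U$ there is an extension", matching the definition of $(\cdot)'_w$). Applying the inductive hypothesis to $s \con x$ with $g_U$ gives $s \con x \in (\hhh^{K+L}_{\ee_0})^\xi_w$, and since this holds for a cofinal family of $x$'s in each $U$, we get $s \in (\hhh^{K+L}_{\ee_0})^{\xi+1}_w$. The limit step is the usual intersection argument.

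The main obstacle I anticipate is the bookkeeping in the successor step of (i): one must simultaneously (a) keep $f$'s successor $g_U$ small on the finite set $s$, (b) produce a genuine new coordinate $x \in U$ along which $\hhh^K_\ee$ extends, and (c) keep $g_U$ small on $s \con x$ so the inductive hypothesis applies. Resolving this cleanly likely requires strengthening the inductive statement so that it quantifies over all admissible extensions and all weak neighborhoods $U$ at once, i.e. proving directly that $s \in (\hhh^{K+L}_{\ee_0})^\xi_w$ by verifying the derivative condition at each stage using the freedom in choosing the $w^*$-neighborhood defining the slice. I would phrase the induction as: if $s \in \hhh^K_\ee$, $f \in s^\xi_\ee(L)$, and $|f(x)| < \ee_0$ on $s$, then $s \in (\hhh^{K+L}_{\ee_0})^\xi_w$; the successor case then only needs: given $U \in \mmm$, find $x \in U$ with $s\con x \in \hhh^K_\ee$ and $g \in s^\xi_\ee(L)$ with $|g(y)| < \ee_0$ for $y \in s \con x$. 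Smallness of $f$ on the single new vector $x$ is obtained by shrinking the slicing neighborhood to also control the coordinate $x$ (after $x$ has been chosen), using that $f(x)$ itself is controlled because we may pick $x$ from a weak neighborhood on which $f$ is small; existence of such an $x$ extending $s$ in $\hhh^K_\ee$ comes from the fact that $\hhh^K_\ee$, restricted below $s$, is closed under small perturbations and $0$ is a limit of... — here I would instead simply observe that appending $x=0$ always works since $x^*(0) = 0 < \ee$ is false; so one genuinely needs $s$ to have nonempty extension set, which holds because $s \in \hhh^K_\ee$ and any convex block argument (noted in the remark before the theorem) lets us stay inside. I will iron this out with the stated remark that convex blocks of members of $(\hhh^K_\ee)^\xi_w$ remain in it.

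For part (ii), the inclusion $(\hhh^L_\ee)^\xi_w \subset \hhh^{s^\xi_\delta(L)}_\ee$ I would prove by induction on $\xi$: if $s \in (\hhh^L_\ee)^{\xi}_w$, pick a witnessing $x^* \in L$; one shows $x^*$ (or a $w^*$-cluster point of appropriately chosen witnesses) survives to $s^\xi_\delta(L)$. The key mechanism: each weak-derivative step of $\hhh^L_\ee$ forces, via compactness, a $w^*$-limit point of witnessing functionals that differ by $\ee$-separated amounts along the appended coordinates, so such a limit point lies in $s_\delta(\cdot)$ for any $\delta < \ee$; iterating gives membership in $s^\xi_\delta(L)$. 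The variant with $(K,\delta)$ in place of $w$ and $\rho$ in place of $\delta$ is the same argument, with the extra room $\rho < \ee - \delta$ absorbing the fact that the $(K,\delta)$-derivative is coarser; one uses that a $(K,\delta)$-step still produces functionals separated by at least $\ee - \delta$ in the relevant seminorm, enough to survive an $s_\rho$-step. Finally part (iii) is immediate: given $o_w(\hhh^K_{\ee_1}) > \xi$ and $o_w(\hhh^L_{\ee_1}) > \zeta$, by (ii) there is $s \in \hhh^{s^\zeta_{\ee_1/2}(L)}_{\ee_1}$ (taking $\delta = \ee_1/2 < \ee_1$), in particular $s^\zeta_{\ee_1/2}(L) \neq \varnothing$, and — running (ii) together with (i) — one extracts $f \in s^\zeta_{\ee_1/2}(L)$ and a long enough $s \in \hhh^K_{\ee_1}$ on which $f$ is small (shrinking the slicing data), then applies (i) with $\ee = \ee_1$, $\xi = \zeta$ to get $s \in (\hhh^{K+L}_{\ee_0})^\zeta_w$; since $o_w(\hhh^K_{\ee_1}) > \xi$ we may take $s$ to come from a position inside $(\hhh^K_{\ee_1})^\xi_w$... more carefully, one composes: first go $\xi$ steps inside $\hhh^K$, landing at some $s$, then from $s$ invoke (i) to continue $\zeta$ further steps inside $\hhh^{K+L}_{\ee_0}$, yielding $o_w(\hhh^{K+L}_{\ee_0}) \geqslant \xi + \zeta$ — and since $\ee_0 < \ee_1/4$ is the only constraint, and ordinal addition gives $\zeta + \xi$ by reindexing which end we iterate from, we obtain $o_w(\hhh^{K+L}_{\ee_0}) > \zeta + \xi$. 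The delicate point in (iii) is getting $f$ small on the already-built $s$ simultaneously with $s$ being deep in $(\hhh^K_{\ee_1})^\xi_w$; I expect this to follow because the construction of (i)'s successor step only ever appends vectors from prescribed weak neighborhoods, so one can interleave the two derivations and keep shrinking neighborhoods to maintain smallness.
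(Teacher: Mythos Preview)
Your approach to part (ii) is essentially the paper's: transfinite induction, taking $w^*$-cluster points of witnessing functionals and using that the appended coordinates force norm separation at least $\ee$ (resp.\ $\ee-\delta$). Part (iii) also follows the paper's line once (i) is in hand.

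The genuine gap is in part (i). Your transfinite induction on $\xi$ requires, in the successor step, producing $x\in U\cap B_X$ with $s\cat x\in \hhh^K_\ee$ so that the inductive hypothesis applies to $s\cat x$. But nothing in the hypotheses gives $s$ any proper extensions in $\hhh^K_\ee$: we only know $s\in\hhh^K_\ee$, not that $s$ lies in any derived set of $\hhh^K_\ee$. (Your own parenthetical ``appending $x=0$\ldots so one genuinely needs $s$ to have nonempty extension set'' identifies exactly this problem, and the convex-block remark does not help, since convex blocks shorten sequences rather than extend them.) So the induction as stated cannot close.

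The paper's proof of (i) is not by induction on $\xi$ at all. It fixes once and for all a $g\in K$ with $g\geqslant\ee$ on $s$, invokes Lemma~\ref{lemma1} to obtain the full $w^*$-tree $(f_\beta)_{\beta\in\bbb_\xi}\subset L$ encoding the slicing $f\in s_\ee^\xi(L)$, and then constructs $(x_\alpha)_{\alpha\in\aaa_\xi}$ and a map $\varphi:\aaa_\xi\to\bbb_\xi$ by induction on the \emph{length} $|\alpha|$. The new vectors $x_\alpha$ are built from the $L$-side data: one takes $y_V\in B_X$ witnessing $\|f_{(t,\sigma_0\cat V)}-f_\beta\|>\ee/2$ and sets $x_\alpha=(y_{V_2}-y_{V_1})/2$ for suitably chosen $V_1,V_2$. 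The functional certifying $s\cat(x_{\alpha|_i})_{i=1}^{|\alpha|}\in\hhh^{K+L}_{\ee_0}$ is $g+f_{\varphi(\alpha)}$: on $s$ the $K$-part $g$ is large and $f_{\varphi(\alpha)}$ is kept small by a telescoping estimate; on each $x_{\alpha|_i}$ it is $f_{\varphi(\alpha)}$ that is large (roughly $\ee/4$) while $g$ is kept small by the choice of subnets. Crucially, $s\cat x_\alpha$ is \emph{never} required to lie in $\hhh^K_\ee$. The extension comes entirely from $L$'s slicing structure, not from $K$.
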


\begin{proof} $(i)$ Setting $\hhh= \hhh^{K+L}_{\ee_0}(s)$, by Lemma \ref{lemma2} it is sufficient to find $(x_\alpha)_{\alpha \in \aaa_\xi}\subset B_X$ so that for $\alpha=(t, (U_1, \ldots, U_n))\in \aaa_\xi$, $x_\alpha\in U_n$ and so that $s\cat(x_{\alpha|_i})_{i=1}^{|\alpha|}\in \hhh^{K+L}_{\ee_0}$. Choose $g\in K$ so that for all $x\in s$, $g(x)\geqslant \ee$.   Let $(f_\beta)_{\beta\in \bbb_\xi}\subset L$ be as in Lemma \ref{lemma1} with $f=f_\varnothing$.  Fix a sequence of positive numbers $(\ee_n)_{n=1}^\infty$ so that $ \sum_{n=0}^\infty \ee_n < \ee/4$.  Let $\mu=\sum_{n=1}^\infty \ee_n$.  We define $\varphi(\alpha)\in \bbb_\xi$ and $x_\alpha\in B_X$ for $\alpha\in \aaa_\xi$ by induction on $|\alpha|$ so that $(x_\alpha)_{\alpha\in \aaa_\xi}$ and $\varphi:\aaa_\xi\to \bbb_\xi$ satisfy for all $\alpha\in \aaa_\xi$ \begin{enumerate}[(i)]\item $|f_{\varphi(\alpha)}(x)|< \sum_{n=0}^{|\alpha|} \ee_n$ for all $x\in s$, \item $|g(x_\alpha)|< \ee/4-\sum_{n=0}^\infty \ee_n=:\delta$, \item for $1\leqslant i\leqslant |\alpha|$, $f_{\varphi(\alpha)}(x_{\alpha|_i})> \ee/4- \sum_{n=1}^{|\alpha|} \ee_n$, \item if $\alpha=(t, (U_1, \ldots, U_n))\in \aaa_\xi$, $x_\alpha\in U_n$. \item if $\alpha= (t, \sigma)$, $\varphi((t, \sigma))= (t, \sigma_0)$ for some $\sigma_0\in \nnn^{<\nn}$.      \end{enumerate}

Let us first see how this finishes the proof of $(i)$.  Assume first that $\xi>0$.  We must show that $s\cat (x_{\alpha|_i})_{i=1}^{|\alpha|}\in \hhh^{K+L}_{\ee_0}$.  Note that $g+f_{\varphi(\alpha)}\in K+L$.  For $x\in s$, $$(g+f_{\varphi(\alpha)})(x) \geqslant \ee - |f_{\varphi(\alpha)}(x)|> \ee- \sum_{n=0}^\infty \ee_n > \ee-\ee/4>\ee_0.$$  For $1\leqslant i\leqslant |\alpha|$, $$(g+f_{\varphi(\alpha)})(x_{\alpha|_i}) > \ee/4- \sum_{n=1}^{|\alpha|} \ee_n - |g(x_{\alpha|_i})|> \ee/4- \mu-\delta= \ee_0.$$   In the case that $\xi=0$, we do not need to define $\varphi$ and $x_\alpha$.  We repeat the first of these two computations above with $f_{\varphi(\alpha)}$ replaced by $f$, which finishes the proof in this case.  Therefore for the remainder we will only consider the case $\xi>0$.  

Assuming that $\varphi(\alpha)$ and $x_\alpha$ have been defined for each $\alpha\in \aaa_\xi$ with $|\alpha|<n$, we fix $\alpha\in \aaa_\xi$ with $|\alpha|=n$ (if such an $\alpha$ exists, otherwise we have already completed the definitions of $\varphi(\alpha)$ and $x_\alpha$) and define $\varphi(\alpha)$ and $x_\alpha$.  Write $\alpha= (t, \sigma\cat U)$.  If $n=1$, let $\beta=\varnothing$, and if $n>1$, let $\beta= \varphi((p(t), \sigma))= (p(t), \sigma_0)$ for some $\sigma_0\in \nnn^{<\nn}$.  Note that $(f_{(t, \sigma_0\cat V)})_{V\in \nnn}$ is a net in $L$ converging $w^*$ to $f_\beta$ so that $\|f_\beta - f_{(t, \sigma_0\cat V)}\|> \ee/2$ for all $V\in \nnn$.   For all $V\in \nnn$, let $y_V\in B_X$ be chosen so that $(f_{(t, \sigma_0\cat V)} - f_\beta)(y_V)>\ee/2$.   By passing to a subnet $(y_V)_{V\in D}$ of $(y_V)_{V\in \nnn}$, we may assume that for all $V_1, V_2\in D$, $$y_{V_2}-y_{V_1}\in U\cap \{x\in X: |g(x)|<\delta\}\cap \{x\in X: |f_\beta(x)|<\ee_n\}.$$  Let $V_1\in D$ be fixed and choose $V_2\in D$ so that $|(f_{(t, \sigma_0\cat {V_2})}- f_\beta)(y_{V_1})|< \ee_n$, for each $1\leqslant i<n$, $|(f_{(t, \sigma_0\cat V_2)}-f_\beta)(x_{\alpha|_i})|<\ee_n$, and for each $x\in s$, $|(f_{(t, \sigma_0\cat V_2)}-f_\beta)(x)|<\ee_n$.  Of course, we may do this since $(f_{(t_0, \sigma_0\cat V)})_{V\in D}$ converges $w^*$ to $f_\beta$.  Define $x_\alpha=(y_{V_2}-y_{V_1})/2$ and $\varphi(\alpha)= f_{(t, \sigma_0\cat V_2)}$, and note that (ii), (iv) and (v) are satisfied by this construction.       

Fix $x\in s$.  If $n=1$, $$|f_{\varphi(\alpha)}(x)| \leqslant |f_\varnothing(x)|+ |(f_{\varphi(\alpha)}-f_\varnothing)(x)|< \ee_0+\ee_1.$$ If $n>1$, $$|f_{\varphi(\alpha)}(x)|\leqslant |(f_{\varphi(\alpha)}-f_\beta)(x)|+ |f_\beta(x)|  < \ee_n+ \sum_{i=0}^{n-1} \ee_i =\sum_{i=0}^n \ee_i.$$  This shows that (i) is satisfied.  

For $1\leqslant i<n$, $$f_{\varphi(\alpha)}(x_{\alpha|_i}) \geqslant f_\beta(x_{\alpha|_i}) - |(f_{\varphi(\alpha)}-f_\beta)(x_{\alpha|_i})| > \ee/4- \sum_{i=1}^{n-1}\ee_i - \ee_n = \ee/4- \sum_{i=1}^n \ee_i.$$  Also, \begin{align*} f_{\varphi(\alpha)}(x_\alpha) & \geqslant \frac{1}{2}(f_{\varphi(\alpha)}-f_\beta)(y_{V_2}) - \frac{1}{2}|(f_{\varphi(\alpha)}- f_\beta)(y_{V_1})| -|f_\beta(x_\alpha)| \\ & > \frac{1}{2}(\ee/2) - \ee_n/2-\ee_n/2 = \ee/4 - \ee_n.\end{align*} This shows (iii), and this completes the proof of $(i)$.

$(ii)$ We prove both containments simultaneously by induction.  For $\xi=0$, we have equality by definition.  Next, assume the result holds for a given $\xi$.  If $(\hhh^L_\ee)_w^{\xi+1}=\varnothing$, of course $(\hhh^L_\ee)_w^{\xi+1}\subset \hhh^{s_\delta^{\xi+1}(L)}_\ee$. Otherwise fix $s\in (\hhh^L_\ee)_w^{\xi+1}$ and choose $(x_U)_{U\in \mmm}\subset B_X$ so that $s\cat x_U\in (\hhh^L_\ee)_w^\xi$ for all $U\in \mmm$.  For each $U$, by the inductive hypothesis we may select $f_U\in s_\delta^\xi(L)$ so that $f_U(x)\geqslant \ee$ for all $x\in s$ and so that $f_U(x_U)\geqslant \ee$.  Let $f$ be the $w^*$ limit of a $w^*$ converging subnet $(f_U)_{U\in D}$ of $(f_U)_{U\in \mmm}$. By $w^*$ compactness of $s_\delta^\xi(L)$, $f\in s_\delta^\xi(L)$.  Moreover $$\underset{U\in D}{\lim \sup} \|f-f_U\| \geqslant \underset{U\in D}{\lim \sup}(f_U-f)(x_U) = \underset{U\in D}{\lim \sup} f_U(x_U) \geqslant \ee> \delta.$$  This means $f\in s_\delta^{\xi+1}(L)$, and of course $f(x)\geqslant \ee$ for all $x\in s$.  This gives the successor case of the first inclusion.  The second inclusion is similar, except we replace $\mmm$ by $(K, \delta)$ and let $(f_U)_{U\in D}$ be a $w^*$ converging subnet of $(f_U)_{U\in (K, \delta)}$.  Then if $f$ is the $w^*$ limit of this subnet, $$\underset{U\in D}{\lim \sup} \|f-f_U\| \geqslant \underset{U\in D}{\lim \sup}(f_U-f)(x_U) \geqslant  \underset{U\in D}{\lim \sup} f_U(x_U) - \delta \geqslant \ee-\delta >\rho,$$ and $f\in s_\rho^{\xi+1}(L)$.  Here we have used the fact that for any subnet $(x_U)_{U\in D}$ of $(x_U)_{U\in (K, \delta)}$ and any $g\in K$, $\underset{U\in D}{\lim \sup}|g(x_U)| \leqslant \delta$ by the definition of $(K, \delta)$.  

Assume the result holds for all $\zeta<\xi$, $\xi$ a limit ordinal.  If $(\hhh^L_\ee)_w^\xi=\varnothing$, of course $(\hhh^L_\ee)_w^\xi \subset \hhh_\ee^{s_\ee^\xi(L)}$.  Otherwise fix $s\in (\hhh^L_\ee)_w^\xi$.  This means $s\in (\hhh^L_\ee)_w^\zeta$ for each $\zeta<\xi$, whence there exists $(f_\zeta)_{\zeta<\xi}$ so that $f_\zeta\in s_\delta^\zeta(L)$ and $f_\zeta(x)\geqslant \ee$ for all $x\in s$ and $\zeta<\xi$.  If $f$ is any $w^*$ limit of a $w^*$ converging subnet of $(f_\zeta)_{\zeta<\xi}$, we deduce $f\in s^\xi_\ee(L)$ and $f(x)\geqslant \ee$ for all $x\in s$, giving the limit ordinal case of the first inclusion.  The second inclusion is similar, with $\mmm$ replaced by $(K, \delta)$.

$(iii)$ Fix $0<\ee<\ee_1$ so that $0<\ee_0<\ee/4$.  By $(ii)$, $ s_\ee^\zeta(L)\neq \varnothing$.  Fix $f\in s_\ee^\zeta(L)$.  Since $o_w(\hhh^K_\ee)>\xi$, by  Lemma \ref{lemma2}, we may choose $(x_\alpha)_{\alpha\in \aaa_\xi}$ so that if $\alpha=(t, (U_1, \ldots, U_n))\in \aaa_\xi$, $x_\alpha\in U_n$ and for each $\alpha\in \aaa_\xi$, $(x_{\alpha|_i})_{i=1}^{|\alpha|}\in \hhh^K_\ee$. Let $V=\{x\in X: |f(x)|<\ee_0\}$.  By replacing $x_{(t, (U_1, \ldots, U_n))}$ with $x_{(t, (V\cap U_1, \ldots, V\cap U_n))}$, we may assume that for each $\alpha\in \aaa_\xi$, $|f(x_\alpha)|< \ee_0$.  Then by $(i)$, $(x_{\alpha|_i})_{i=1}^{|\alpha|}\in (\hhh^{K+L}_{\ee_0})_w^\zeta$.  But appealing again to Lemma \ref{lemma2}, $o_w((\hhh^{K+L}_{\ee_0})_w^\zeta)>\xi$, whence $o_w(\hhh^{K+L}_{\ee_0})>\zeta+\xi$.

\end{proof}

\begin{proof}[Proof of Theorem \ref{main theorem}] We have already argued that $(ii)\Rightarrow (iii)$ after the statement of the theorem.  The implication $(i)\Rightarrow (ii)$ follows from Lemma \ref{lemma coup de grace}$(i)$ with $s=\varnothing$ and $K=\{0\}$.  Then $s^\xi_\ee(L)\neq \varnothing$ implies $o_w(\hhh^L_{\ee_0})>\xi$ for any $0<\ee_0<\ee/4$.  The implication $(iii)\Rightarrow (i)$ follows from the second inclusion of Lemma \ref{lemma coup de grace}$(ii)$.

\end{proof}

\section{Sum estimate applications} The remainder of this note is devoted to applications of Theorem \ref{main theorem}. The first section of applications deals with results yielding sum estimates, which are naturally grouped together and deduced as consequences of related coloring lemmas which we discuss at the end of this section.

The following facts about ordinals can be found in \cite{M}.   Recall that any ordinal $\xi$ can be uniquely written as $$\xi=\omega^{\alpha_1}n_1+\ldots +\omega^{\alpha_k}n_k,$$ where $n_i\in \nn$, $\alpha_1>\ldots >\alpha_k$, and $k=0$ if $\xi=0$.  Here, $\omega$ denotes the first infinite ordinal.  This representation is the \emph{Cantor normal form}.  If $\xi, \zeta$ are two ordinals, by allowing either $m_i=0$ or $n_i=0$, we can write $\xi=\omega^{\alpha_1}m_1+\ldots +\omega^{\alpha_k} m_k$, $\zeta= \omega^{\alpha_1}n_1+\ldots + \omega^{\alpha_k} n_k$.  Then the \emph{Hessenberg} (or \emph{natural}) \emph{sum} of $\xi$ and $\zeta$, denoted $\xi\oplus\zeta$, is defined to be $\omega^{\alpha_1}(m_1+n_1)+\ldots + \omega^{\alpha_k} (m_k+n_k)$.  Note that this is well-defined, as non-uniqueness of representation only yields extraneous zero terms in the sum. We remark that for any fixed $\delta\in \ord$, $\gamma\mapsto \delta\oplus \gamma$ is strictly increasing.    To save writing, we will agree that $\infty\oplus \xi=\xi\oplus \infty=\infty$ for any $\xi\in\ord\cup \{\infty\}$.  

We recall the definition of gamma and delta numbers.  An ordinal $\xi$ is called a gamma number if for any $\zeta, \eta<\xi$, $\zeta+\eta<\xi$. The ordinal $\xi$ is a gamma number if and only if for any $\zeta<\xi$, $\zeta+\xi=\xi$, or equivalently, $\xi=0$ or $\xi=\omega^\eta$ for some $\eta\in \ord$.  An ordinal $\xi$ is called a delta number if for any $\zeta, \eta<\xi$, $\zeta\eta<\xi$.  The ordinal $\xi$ is a delta number if and only if for any $0<\zeta<\xi$, $\zeta\xi=\xi$, or equivalently, $\xi=0$, $\xi=1$, or $\xi=\omega^{\omega^\zeta}$ for some $\zeta\in \ord$.

 Throughout, a $K$-unconditional basis $(e_i)_{i\in I}$ for the Banach space $E$ will be an unordered subset of $E$ having dense span in $E$ so that for every pair of finite subsets $J_1, J_2$ of $I$, all scalars $(a_n)_{n\in J_1\cup J_2}$, and all scalars $(\ee_n)_{n\in J_1\cup J_2}$ so that $|\ee_n|=1$ for each $n\in J_1\cup J_2$, $$\sum_{n\in J_1\cup J_2} a_n e_n\mapsto \sum_{n\in J_1}a_ne_n- \sum_{n\in J_2} a_ne_n$$ is a well-defined, continuous projection of norm not more than $K$.   In this case, every $x\in E$ has a unique representation $x=\sum_{n\in I} a_ne_n$, where $\{n\in I: a_n\neq 0\}$ is countable and the series $\sum_{n\in I}a_ne_n$ converges unconditionally to $x$.  Moreover, for every $J\subset I$ and any $(\ee_n)_{n\in J}$ with $|\ee_n|\leqslant 1$ for all $n\in I$, the map $\sum_{n\in I}a_ne_n\mapsto \sum_{n\in I}\ee_na_n e_n$ is well-defined with norm not exceeding $K$.  We can always equivalently renorm a Banach space with a $K$-unconditional basis $(e_i)_{i\in I}$ so that $(e_i)_{i\in I}$ becomes a $1$-unconditional basis for $E$ with the new norm. If we are not concerned with the constant $K$, we will simply say $(e_i)_{i\in I}$ is an unconditional basis for $E$. We let $(e_i^*)_{i\in I}$ denote the biorthogonal functionals to $E$, which is a $K$-unconditional basis for its closed span.  It is well-known that a Banach space $E$ with an unconditional basis $(e_i)_{i\in I}$ must contain an isomorphic copy of $\ell_1$, or the closed span of the coordinate functionals $(e_i^*)_{i\in I}$ is all of $E^*$.  Similarly, if $E, F$ are Banach spaces with unconditional bases $(e_i)_{i\in I}$ and $(f_i)_{i\in I}$, respectively, and $B:E\to F$ is a diagonal operator (meaning that $Be_i=b_if_i$ for some scalars $(b_i)_{i\in I}$), then either $B$ preserves an isomorphic copy of $\ell_1$, or $B^*F^*\subset [e_i^*:i\in I]$.  We remark that $E^*$ can be naturally identified with the set of all formal (not necessarily countably non-zero or norm converging) series $\sum_{i\in I} a_i e_i^*$ so that $\sup_{J\in [I]^{<\nn}} \|\sum_{i\in J}a_ie_i^*\|<\infty$, and $\|\sum_{i\in I}a_ie_i^*\|= \sup_{J\in [I]^{<\nn}} \|\sum_{i\in J}a_ie_i^*\|$.   

If $(e_i)_{i\in I}$ is a $1$-unconditional basis for the Banach space $E$, and if $(X_i)_{i\in I}$ is a collection of Banach spaces, the direct sum $\bigl(\oplus_{i\in I}X_i\bigr)_E$ is the set of all tuples $(x_i)_{i\in I}$ so that $x_i\in X_i$ and $\sum_{i\in I} \|x_i\|e_i\in E$.  We note that $\bigl(\oplus_{i\in I}X_i\bigr)_E$ is a Banach space when endowed with the norm $\|(x_i)_{i\in I}\|=\|\sum_{i\in I}\|x_i\|e_i\|$.  In this case, $(\oplus_{i\in I}X_i)_E^*$ can be naturally isometrically identified with all tuples $(x_i^*)_{i\in I}\in \prod_{i\in I}X_i^*$ so that the formal series $\sum_{i\in I}\|x_i^*\|e_i^*$ lies in $E^*$.

\subsection{Estimates for Minkowski sums}

The main result of this subsection is the following.  

\begin{theorem} Let $K, L\subset X^*$ $w^*$ compact and non-empty.  \begin{enumerate}[(i)]\item $Sz(K\cup L)=\max\{Sz(K), Sz(L)\}$,\item  $\sup_{\ee>0}( Sz_\ee(K)+Sz_\ee(L)) \leqslant Sz(K+L)$.  \item There exists a positive constant $C$ so that for all $\ee>0$, $Sz_\ee(K+L)\leqslant Sz_{\ee/C}(K)\oplus Sz_{\ee/C}(L)$. \item  If $K$ and $L$ are convex, then $Sz(K+L)=\max\{Sz(K), Sz(L)\}$.  \end{enumerate} 

\label{Minkowski}

\end{theorem}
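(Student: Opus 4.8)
The plan is to deduce all four parts from Theorem \ref{main theorem} together with Lemma \ref{lemma coup de grace}, working entirely with the trees $\hhh^K_\ee$ and never returning to slicings. For part $(i)$, the inclusion $\max\{Sz(K),Sz(L)\}\leqslant Sz(K\cup L)$ is immediate since $\hhh^K_\ee\subset \hhh^{K\cup L}_\ee$ and $\hhh^L_\ee\subset \hhh^{K\cup L}_\ee$, so the weak order is monotone. For the reverse, I would observe the pointwise identity $\hhh^{K\cup L}_\ee=\hhh^K_\ee\cup \hhh^L_\ee$ (a sequence $t$ is witnessed by some $x^*\in K\cup L$ iff it is witnessed in $K$ or in $L$), and then use the standard fact that for hereditary trees $o_w(\hhh_1\cup \hhh_2)=\max\{o_w(\hhh_1),o_w(\hhh_2)\}$ — this is proved by a routine transfinite induction showing $(\hhh_1\cup\hhh_2)^\xi_w\subset (\hhh_1)^\xi_w\cup(\hhh_2)^\xi_w$, using that the two trees are each hereditary so a node surviving in the union with the full net of extensions must have, for a cofinal set of $U\in\mmm$, extensions in one fixed tree, and then heredity pins it down. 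Combined with Theorem \ref{main theorem} this gives $(i)$.

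Part $(ii)$ is essentially a restatement of Lemma \ref{lemma coup de grace}$(iii)$ with the constant absorbed: fix $\ee>0$ and suppose $Sz_\ee(K)>\xi$ and $Sz_\ee(L)>\zeta$ with $\xi,\zeta$ arbitrary ordinals below the respective indices. By Theorem \ref{main theorem}, $o_w(\hhh^K_{\ee'})>\xi$ and $o_w(\hhh^L_{\ee'})>\zeta$ for suitable $\ee'$ (indeed any $0<\ee'<\ee/4$ works via the implication $(i)\Rightarrow(ii)$ applied to each of $K$ and $L$). Then Lemma \ref{lemma coup de grace}$(iii)$ yields $o_w(\hhh^{K+L}_{\ee_0})>\zeta+\xi$ for $0<\ee_0<\ee'/4$, and applying Theorem \ref{main theorem} once more gives $Sz_{\ee_0}(K+L)>\zeta+\xi$, hence $Sz(K+L)\geqslant Sz_\ee(K)+Sz_\ee(L)$ after taking suprema over $\xi,\zeta$; taking the supremum over $\ee$ finishes $(ii)$. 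The only subtlety is matching the direction of the (non-commutative) ordinal sum $\zeta+\xi$ in Lemma \ref{lemma coup de grace}$(iii)$ against $Sz_\ee(K)+Sz_\ee(L)$ in the statement — one must be careful to feed $K$ into the role played by $K$ in the lemma and $L$ into the role of $L$ so that the orders line up, and note $Sz_\ee(K)+Sz_\ee(L)=\sup_{\xi<Sz_\ee(K),\,\zeta<Sz_\ee(L)}(\zeta+\xi)+1$ (handling successor vs.\ limit cases of the two indices separately).

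For part $(iii)$, the constant $C$ is the source of the $\ell_1^+$-type loss: take $C$ so that the chain of $\ee/4$ losses in parts $(i)$ and $(ii)$ of Lemma \ref{lemma coup de grace} is absorbed, e.g.\ $C=16$. Given $\ee>0$, set $\ee_0=\ee$ and run the second inclusion of Lemma \ref{lemma coup de grace}$(ii)$ in the form $(\hhh^{K+L}_\ee)^\xi_w\subset \hhh^{s^\xi_\delta(K+L)}_\ee$ is the wrong direction; instead I would argue directly on $K+L$: if $s\in (\hhh^{K+L}_\ee)^\gamma_w$, I want to split the witnessing functional $h=f+g\in K+L$ and show that $\gamma$ cannot exceed $\xi\oplus\zeta$ where $\xi,\zeta$ bound the weak orders of $\hhh^K_{\ee/C}$ and $\hhh^L_{\ee/C}$. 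The clean way is a coloring/Ramsey argument on a $\mmm$-tree of height $\xi\oplus\zeta+1$ in $\hhh^{K+L}_\ee$: along each branch, each added vector $x$ satisfies $(f+g)(x)\geqslant\ee$, so either $f(x)\geqslant\ee/2$ or $g(x)\geqslant\ee/2$; two-coloring the nodes by which alternative holds and invoking the partition property of the trees $\aaa_\gamma$ (which the author promises via "coloring lemmas" at the end of the section) produces a monochromatic sub-$\mmm$-tree of order $\xi+1$ in $\hhh^K_{\ee/2}$ or $\zeta+1$ in $\hhh^L_{\ee/2}$, where $\xi\oplus\zeta=o_w(\hhh^K_{\ee/2})\oplus o_w(\hhh^L_{\ee/2})$ would be contradicted once one knows $\gamma>\xi\oplus\zeta$ forces one color class to carry too large a tree — this is exactly the ordinal fact that an $\omega^{\alpha}$-additive splitting of a tree of order $>\xi\oplus\zeta$ has a part of order $>\xi$ or $>\zeta$. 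The main obstacle, and the reason this part is genuinely harder than $(i)$ and $(ii)$, is establishing the right coloring lemma for the non-metrizable $\mmm$-trees $\aaa_\gamma$ (an analogue of the classical fact that $o(T)>\xi\oplus\zeta$ and a $2$-coloring of $T$ gives a monochromatic subtree of order $>\xi$ or $>\zeta$), since one cannot pass to subsequences and must instead pass to subnets/refinements of the directed set $\mmm$ compatibly along the whole tree; I expect the author to prove this as the promised coloring lemma and then part $(iii)$ is a one-line corollary with $C=16$ (or $C=4$ after optimizing). Finally, part $(iv)$: when $K,L$ are convex (and we may assume symmetric, or pass to $K\cup(-K)$ etc.), $K+L$ is convex, and the point is that $\hhh^{K+L}_\ee$ and $\hhh^K_\ee,\hhh^L_\ee$ are now governed by Remark \ref{useful remark}; in fact for convex symmetric sets $|x|_{K+L}=|x|_K+|x|_L$ is not quite right, but $\hhh^{K+L}_\ee\supset\hhh^K_{\ee}$ and $\supset\hhh^L_\ee$ gives $Sz(K+L)\geqslant\max$, while for the upper bound one uses that $s\in\hhh^{K+L}_\ee$ with witness $f+g$ forces, by convexity and a Hahn–Banach separation as in the discussion preceding Remark \ref{useful remark}, that every convex combination $x$ of $s$ has $\max_{k^*\in K}k^*(x)+\max_{l^*\in L}l^*(x)\geqslant\ee$, hence $x$ lies in $\hhh^K_{\ee/2}$ or $\hhh^L_{\ee/2}$ — but since $\hhh^K_{\ee/2}$ is hereditary and closed under convex blocks (the remark after the definition of $\hhh^K_\ee$), a $\mmm$-tree in $\hhh^{K+L}_\ee$ of large order yields, after the same coloring argument, a $\mmm$-tree of order $\geqslant$ half of it in one of $\hhh^K_{\ee/2},\hhh^L_{\ee/2}$, and here convexity removes the $\oplus$: a more careful bookkeeping (or a direct convexity argument choosing, at each level, convex combinations that keep the $K$-part or $L$-part above $\ee/2$ uniformly) shows $o_w(\hhh^{K+L}_\ee)\leqslant\max\{o_w(\hhh^K_{\ee/2}),o_w(\hhh^L_{\ee/2})\}$, whence $Sz(K+L)\leqslant\max\{Sz(K),Sz(L)\}$, completing $(iv)$.
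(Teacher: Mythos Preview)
Your arguments for parts $(ii)$ and $(iii)$ match the paper's: $(ii)$ is indeed an immediate consequence of Lemma~\ref{lemma coup de grace}$(iii)$ and Theorem~\ref{main theorem}, and for $(iii)$ the paper proves exactly the coloring lemma you anticipate (Lemma~\ref{useful lemma}$(iii)$), obtaining $o_w(\hhh^{K+L}_\ee)\leqslant o_w(\hhh^K_{\ee/2})\oplus o_w(\hhh^L_{\ee/2})$ from a two-coloring of the maximal branches of an $\aaa_\xi$-tree. Your approach to $(i)$ is actually a bit more elementary than the paper's: the paper colors $MAX(\aaa_\xi)$ according to whether the witnessing functional lies in $K$ or $L$ and invokes Lemma~\ref{useful lemma}$(ii)$, whereas your direct transfinite induction $(\hhh_1\cup\hhh_2)^\xi_w\subset(\hhh_1)^\xi_w\cup(\hhh_2)^\xi_w$ works and avoids the machinery (the key point, which you identify, is that in a directed set the union of two sets being everything forces one to be cofinal).

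Part $(iv)$, however, has a genuine gap. Your claim that ``convexity removes the $\oplus$'' and that one can obtain the $\ee$-level inequality $o_w(\hhh^{K+L}_\ee)\leqslant\max\{o_w(\hhh^K_{\ee/2}),o_w(\hhh^L_{\ee/2})\}$ is not justified, and the sketched arguments do not establish it: the coloring argument you describe is the same one used in $(iii)$ and produces only a Hessenberg-sum bound, while convexity of $K$ and $L$ plays no role there. The paper's route is different and does not attempt any such $\ee$-level max estimate. Instead it combines $(iii)$ with Proposition~\ref{special form}: convexity forces $Sz(K)$ and $Sz(L)$ to be gamma numbers (either $1$, $\infty$, or $\omega^\xi$ with the supremum over $\ee$ not attained), and since the Hessenberg sum of two ordinals strictly below $\omega^\xi$ is strictly below $\omega^\xi$, the bound from $(iii)$ gives $o_w(\hhh^{K+L}_\ee)<\omega^\xi$ for every $\ee$, hence $Sz(K+L)\leqslant\omega^\xi=\max\{Sz(K),Sz(L)\}$. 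The norm-compact and $\infty$ cases are handled separately. You should replace your direct attempt with this deduction from $(iii)$ and Proposition~\ref{special form}.
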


For this, we will need the following concerning what values may be attained by the Szlenk index of a convex set.  

\begin{proposition} Let $\varnothing \neq K$ be a $w^*$ compact subset of $X^*$.  \begin{enumerate}[(i)]\item $o_w(\hhh^K_\ee)=1$ for every $\ee>0$ if and only if $K$ is norm compact.   \item If $K$ is convex, $0<\delta < \ee/8$, and $\zeta\in \emph{\ord}$ is such that $\zeta<o_w(\hhh^K_\ee)$, then $\zeta \cdot 2 < o_w(\hhh^K_\delta)$.  \item Either $\sup_{\ee>0}o_w(\hhh^K_\ee)=\infty$ or there exists $\xi\in \emph{\ord}$ so that $\sup_{\ee>0}o_w(\hhh^K_\ee)=\omega^\xi$, and this supremum is attained if and only if $K$ is norm compact.  \end{enumerate}

\label{special form}
\end{proposition}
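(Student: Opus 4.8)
\textbf{Proof plan for Proposition \ref{special form}.}

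The plan is to prove the three items in order, using Theorem \ref{main theorem} to move freely between the slicing description and the ordinal-index description $o_w(\hhh^K_\ee)$, and using Lemma \ref{lemma coup de grace}(iii) with $K=L$ for the self-sum estimate in (ii).

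For (i): The forward direction is essentially the statement that if $K$ is not norm compact, then some $s_\ee(K)$ is non-empty, i.e.\ $Sz_\ee(K)>1$ for some $\ee$; by Theorem \ref{main theorem}, $o_w(\hhh^K_\ee)>1$ for that $\ee$. Concretely, if $K$ is not norm compact there is $\ee>0$ and an infinite $\ee$-separated subset $(x^*_n)\subset K$, and one checks (by $w^*$ compactness, passing to a $w^*$ convergent subnet) that any $w^*$ cluster point of such a net lies in $s_\ee(K)$. Conversely, if $K$ is norm compact, then for every $\ee>0$, $K$ is covered by finitely many norm balls of radius $\ee/3$, and each such ball, intersected with a suitable $w^*$ open set (obtained by separating the compact disjoint pieces), has diameter $\leqslant \ee$; hence $s_\ee(K)=\varnothing$, so $Sz_\ee(K)\leqslant 1$ and $o_w(\hhh^K_\ee)\leqslant 1$. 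Since $\varnothing\in\hhh^K_\ee$ always, $o_w(\hhh^K_\ee)=1$. I expect this to be the routine part.

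For (ii): This is where the convexity and the self-sum enter. Since $K$ is convex, $K+K=2K$, and $Sz_\eta(2K)=Sz_{\eta/2}(K)$, equivalently $o_w(\hhh^{2K}_{\eta})=o_w(\hhh^K_{\eta/2})$ for all $\eta$ — this follows from the defining formula for $\hhh^K_\ee$ by scaling. Now apply Lemma \ref{lemma coup de grace}(iii) with both sets equal to $K$ and $\ee_1=\ee$: if $o_w(\hhh^K_\ee)>\zeta$, then taking $\xi=\zeta$ as well, for any $0<\ee_0<\ee/4$ we get $o_w(\hhh^{K+K}_{\ee_0})>\zeta+\zeta=\zeta\cdot 2$. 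Translating, $o_w(\hhh^K_{\ee_0/2})>\zeta\cdot 2$. Choosing $\ee_0$ just below $\ee/4$ we may take $\ee_0/2$ to be anything below $\ee/8$, so for $0<\delta<\ee/8$ we obtain $\zeta\cdot 2<o_w(\hhh^K_\delta)$, which is exactly the claim. The one point to be careful about is the strictness/endpoint bookkeeping in the inequalities $0<\ee_0<\ee/4$ and $\delta<\ee/8$; I would state it as: given $\delta<\ee/8$, put $\ee_0=2\delta<\ee/4$ and apply the lemma directly, which is clean.

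For (iii): Write $S=\sup_{\ee>0}o_w(\hhh^K_\ee)$, which by Theorem \ref{main theorem} equals $Sz(K)$. If $S=\infty$ we are done, so assume $S<\infty$; we must show $S$ is a gamma number, i.e.\ $S=0$ or $S=\omega^\xi$, and that the sup is attained iff $K$ is norm compact. First, $S\geqslant 1$ always since each $o_w(\hhh^K_\ee)\geqslant 1$; and $S=1$ iff every $o_w(\hhh^K_\ee)=1$ (the map $\ee\mapsto o_w(\hhh^K_\ee)$ is non-increasing in $\ee$, so the sup is $\geqslant 2$ as soon as one value is $\geqslant 2$), which by (i) happens iff $K$ is norm compact; note $1=\omega^0$, so this is the gamma-number case with the sup attained. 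Now suppose $S\geqslant 2$, so $K$ is not norm compact. To see $S$ is a gamma number, let $\zeta<S$; I must show $\zeta\cdot 2<S$ — actually I want $\zeta+\zeta<S$, and since for the relevant ordinals $\zeta+\zeta\leqslant\zeta\cdot 2$ this suffices; more carefully a gamma number is characterized by $\zeta,\eta<S\Rightarrow\zeta+\eta<S$, and using (ii) twice (or monotonicity plus $\zeta\oplus\zeta$ bounds) one reduces $\zeta+\eta$ to $\max(\zeta,\eta)\cdot 2$. So: if $\zeta<S$ then $\zeta<o_w(\hhh^K_\ee)$ for some $\ee>0$; if moreover $K$ is convex, (ii) gives $\zeta\cdot 2<o_w(\hhh^K_{\ee/9})\leqslant S$. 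Here is the subtlety: \emph{(iii) is stated without assuming $K$ convex}, so (ii) cannot be applied directly. The fix is to pass to the $w^*$ closed convex hull: let $\widehat K=\overline{\mathrm{co}}^{w^*}(K)$. Part (i) of Theorem \ref{Minkowski} (or a direct slicing argument) is not quite what we need; instead I would prove the auxiliary fact $Sz(K)=Sz(\overline{\mathrm{co}}^{w^*}(K))$, or at least that $o_w(\hhh^K_\ee)$ and $o_w(\hhh^{\widehat K}_{\ee'})$ are cofinal as $\ee,\ee'\to 0$ — in fact $\hhh^K_\ee\subseteq\hhh^{\widehat K}_\ee$ trivially, and conversely by Remark \ref{useful remark} a sequence in $\hhh^{\widehat K}_\ee$ is witnessed by some $x^*\in\widehat K$, which is a $w^*$ limit of convex combinations of elements of $K$, and a routine perturbation shows the sequence lies in $\hhh^K_{\ee-\rho}$ for any $\rho>0$. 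Hence $S=\sup_\ee o_w(\hhh^K_\ee)=\sup_\ee o_w(\hhh^{\widehat K}_\ee)$, and we may assume $K$ convex for the purpose of this item. Then the gamma-number conclusion follows from (ii) as above: $\zeta<S\Rightarrow\zeta\cdot 2<S$, which forces $S\in\{\omega^\xi:\xi\in\ord\}$. Finally, non-attainment: if $S$ were attained, say $S=o_w(\hhh^K_\ee)=\zeta$, then since $S\geqslant 2$ we have $\zeta\geqslant 2$, so $\zeta-1\cdot\text{(or any ordinal }<\zeta)<S$ and (ii) gives a strictly larger value at smaller $\ee$, contradicting maximality of $\zeta=S$ — more precisely, pick $\eta<\zeta$ with, say, $\eta\cdot 2\geqslant\zeta$ (possible since $\zeta\geqslant 2$: take $\eta$ to be $\zeta$ with its Cantor normal form last coefficient decreased, or just note $\zeta\leqslant\zeta'\cdot 2$ for $\zeta'=$ the predecessor if $\zeta$ is a successor, and for limit $\zeta$ any $\eta<\zeta$ with $\eta\cdot2\geqslant\zeta$ exists because $\zeta$ being a limit $\leqslant S$ is a gamma number so $\zeta=\omega^\xi$ and $\eta=\omega^\xi$ is not $<\zeta$... ) — cleanest: since $S=\omega^\xi\geqslant 2$ is a limit gamma number (the case $S=1$ was handled), for any $\eta<S$ we have $\eta\cdot 2<S$ by (ii), so in particular $S$ cannot equal any $o_w(\hhh^K_\ee)$, because that value $\eta:=o_w(\hhh^K_\ee)$ satisfies $\eta\cdot 2<S$, i.e.\ $\eta<S$, contradiction with $\eta=S$. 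Conversely if $K$ is norm compact then $S=1$ is attained by (i).

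The main obstacle I anticipate is the bookkeeping in (iii): reducing the general (non-convex) $K$ to the convex case via the $w^*$ closed convex hull, and then threading the strict inequalities from Lemma \ref{lemma coup de grace}(iii)/part (ii) through the ``limit gamma number, so cannot be attained'' argument without an off-by-one slip. Everything else — (i) and (ii) — is a direct application of the machinery already in place, with (ii) being essentially a one-line invocation of Lemma \ref{lemma coup de grace}(iii) after the scaling identity $o_w(\hhh^{2K}_\eta)=o_w(\hhh^K_{\eta/2})$.
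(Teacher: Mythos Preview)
Your treatment of (i) and (ii) matches the paper's. For (i) you pass through the slicing definition and Theorem~\ref{main theorem}, while the paper argues directly with the weak derivative; both work. For (ii), the paper writes $K=\tfrac12 K+\tfrac12 K$ and uses $\hhh^{\frac12 K}_{\ee/2}=\hhh^K_\ee$, which is the same scaling trick as your $K+K=2K$, and then invokes Lemma~\ref{lemma coup de grace}(iii) exactly as you do.

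In (iii) there are two genuine gaps. First, your reduction to the $w^*$ closed convex hull is wrong: it is \emph{not} true that $\hhh^{\widehat K}_\ee\subset \hhh^K_{\ee-\rho}$. Take $X=\ell_\infty^2$, $K=\{e_1^*,e_2^*\}$; then $\tfrac12(e_1^*+e_2^*)\in\widehat K$ witnesses $(e_1,e_2)\in\hhh^{\widehat K}_{1/2}$, yet no single element of $K$ is positive on both $e_1$ and $e_2$, so $(e_1,e_2)\notin\hhh^K_\ee$ for any $\ee>0$. The paper itself simply applies (ii) in the proof of (iii) without comment, so (iii) is intended to carry the convexity hypothesis of (ii); the remark following the proposition and every later application confirm this. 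You should drop the convex-hull detour and just assume $K$ convex. Second, your ``cleanest'' non-attainment argument is circular: you set $\eta:=o_w(\hhh^K_\ee)=S$ and then invoke the implication ``$\eta<S\Rightarrow \eta\cdot 2<S$'' to deduce $\eta<S$, but $\eta<S$ is precisely the hypothesis you need and do not have. The clean argument is that $o_w(\hhh^K_\ee)$ is always a successor ordinal (since $\varnothing$ lies in every nonempty $(\hhh^K_\ee)_w^\zeta$, the intersection at a limit stage is still nonempty), whereas $\omega^\xi$ with $\xi>0$ is a limit ordinal; hence once $K$ is not norm compact the supremum $\omega^\xi$ cannot be attained.
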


\begin{remark} Item $(iii)$ of Theorem \ref{Minkowski} cannot be non-trivially deduced from results appearing in the literature.  

Part $(iii)$ of Proposition \ref{special form} was shown in \cite{AJO} in the case that $K=B_{X^*}$ where $X$ is a Banach space having separable dual.  We note that the proof given here is not a modification of that proof, which depended on the separability of $X$ and $X^*$.

We next note the origins of some of these results which appear in the literature or which can be deduced from results appearing in the literature which use the slicing definition of the Szlenk index.  Item $(i)$ of Theorem \ref{Minkowski} as well as item $(i)$ of Proposition \ref{special form} were shown by Brooker \cite{B}.  The first part of item $(iii)$ of Proposition \ref{special form} in the case that $K=B_{X^*}$ was shown using the slicing definition by Lancien \cite{L0}, and one can see that the proof applies to any non-empty, $w^*$ compact, convex set $K$.  More generally, this method can be seen to imply $(ii)$ of Theorem \ref{Minkowski}.  Item $(iii)$ in the case that $K=B_{X^*}$ and $L=B_{Y^*}$ for separable Banach spaces $X$ and $Y$, and that $K+L\subset (X\oplus Y)^*$ was treated in \cite{OSZ}.

\end{remark}

\begin{proof}$(i)$ Assume $o_w(\hhh^K_\ee)>1$.  This means $\varnothing\in (\hhh^K_\ee)'_w$, and there exists $(x_U)_{U\in \mmm}\subset B_X$ so that $x_U\in U$ and $(x_U)\in \hhh^K_\ee$ for all $U\in \mmm$.  Choose $(x^*_U)_{U\in \mmm}\subset K$ so that $x^*_U(x_U)\geqslant\ee$. By norm compactness, we may pass to a norm converging subnet $(x_U^*)_{U\in D}$ and note that if $\lim_{U\in D}x^*_U=x^*$, $$\lim_{U\in D} x^*_U(x_U)=\lim_{U\in D} x^*(x_U)=0,$$ since $(x^*_U)_{U\in D}$ is a weakly null net.  This contradiction implies that if $K$ is norm compact, $o_w(K_\ee)=1$ for every $\ee>0$.  Next, if $K$ is not norm compact, we may choose $\ee>0$ and an infinite subset $S$ of $K$ so that if $x^*_1, x^*_2\in S$ are distinct, $\|x^*_1-x^*_2\|> 4\ee$.  We may choose $x^*\in K$ which fails to be $w^*$ isolated in $S$ and, by replacing $S$ with $S\setminus \{x^*\}$, we may assume $x^*\notin S$.  We may choose a net $(x^*_\lambda)_{\lambda\in D}$ in $S$ converging $w^*$ to $x^*$ and, for each $\lambda\in D$, we may choose $x_\lambda\in B_X$ so that $(x^*_\lambda-x^*)(x_\lambda)>4\ee$.  Choose $U\in \mmm$ and, by passing to a subnet of $(x_\lambda)_{\lambda\in D}$ and the corresponding subnet of $(x_\lambda^*)$, we assume that for each $\lambda_1, \lambda_2\in D$, $x_{\lambda_2}-x_{\lambda_1}\in U$ and $|x^*(x_{\lambda_1}-x_{\lambda_2})|<\ee$. Fix $\lambda_1\in D$. Then $$\lim\sup_\lambda x^*_\lambda(x_\lambda- x_{\lambda_1})\geqslant \lim\sup_\lambda (x^*_\lambda-x^*)(x_\lambda-x_{\lambda_1})-\ee \geqslant 3\ee- \lim_\lambda (x^*_\lambda-x^*)(x_{\lambda_1})=3\ee.$$  Then by taking $x_U=(x_\lambda-x_{\lambda_1})/2$ for some $\lambda$, we can guarantee $x^*_\lambda(x_U)> \ee$.  Then the net $(x_U)_{U\in \mmm}$ witnesses the fact that $\varnothing \in (\hhh^K_\ee)_w'$ and $o_w(\hhh^K_\ee)>1$.

$(ii)$  Note that $K=\frac{1}{2}K+ \frac{1}{2}K$. It is obvious that $\hhh^{\frac{1}{2}K}_{\ee/2}= \hhh^K_\ee$, so $\zeta<o_w(\hhh^{\frac{1}{2}K}_{\ee/2})$. By Lemma \ref{lemma coup de grace}$(iii)$ with $K$ and $L$ replaced by $\frac{1}{2}K$ and $\ee_1$ replaced by $\ee/2$, $\zeta\cdot 2 < o_w(\hhh^K_\delta)$.

$(iii)$ Assume $\sup_{\ee>0}o_w(\hhh^K_\ee)<\infty$.  If $\zeta<\sup_{\ee>0}o_w(\hhh^K_\ee)$, we may choose $\ee>0$ with $\zeta< o_w(\hhh^K_\ee)$.  Then $\zeta \cdot 2<o_w(\hhh^K_{\ee/9})$.  In particular, $0<\sup_{\ee>0} o_w(\hhh^K_\ee)$ and $\sup_{\ee>0} o_w(\hhh^K_\ee)$ is a gamma number.  This means $\sup_{\ee>0} o_w(\hhh^K_\ee)=\omega^\xi$ for some ordinal $\xi$, since this supremum cannot be zero.  If $0<\zeta<o_w(\hhh^K_\ee)$, then $\zeta< \zeta \cdot 2 < o_w(\hhh^K_{\ee/9})$.  Since by $(i)$ such a $\zeta$ exists if and only if $K$ fails to be norm compact, we deduce that the supremum is attained if and only if $K$ is norm compact.

\end{proof}

If $(x_i)_{i=1}^n$ is any sequence in the Banach space $X$ and $f,g\in X^*$, are such that $(f+g)(x_i)\geqslant \ee$ for each $1\leqslant i\leqslant n$, then of course there exist $p,q\in \nn_0$ with $p+q=n$ and subsets $A,B$ of $\{1, \ldots,n\}$ with $|A|=p$, $|B|=q$, $f(x_i)\geqslant \ee/2$ and $g(x_j)\geqslant \ee/2$ for all $i\in A$ and $j\in B$.   We will perform a transfinite version of this argument, which will yield most of Theorem \ref{Minkowski} as an easy consequence.  Namely, we will show that if $o_w(\hhh^{K+L}_\ee)>\xi$, there exist ordinals $\eta, \zeta$ with $\eta\oplus \zeta=\xi$ so that $o_w(\hhh^K_{\ee/2})>\eta$ and $o_w(\hhh^L_{\ee/2})>\zeta$.  The execution of this argument is somewhat technical, and similar to the analogous result appearing in \cite{C2} where the family $\aaa_\xi$ was replaced by the fine Schreier family $\fff_\xi$ in the case that $\xi$ is countable.  For this reason, we will omit the details which follow unaltered from the argument appearing there.  

For $\zeta, \xi\in \ord$, if $\theta:\aaa_\zeta\to \aaa_\xi$ and $e:MAX(\aaa_\zeta)\to MAX(\aaa_\xi)$ are any functions, we say the pair $(\theta, e)$ is \emph{extremely nice} provided \begin{enumerate}[(i)]\item $\theta$ is nice, \item for each $\alpha\in MAX(\aaa_\zeta)$, $\theta(\alpha)\preceq e(\alpha)$. \end{enumerate} By an abuse of notation, we write $(\theta, e):\aaa_\zeta\to \aaa_\xi$ rather than $(\theta, e):\aaa_\zeta\times MAX(\aaa_\zeta)\to \aaa_\xi\times MAX(\aaa_\xi)$.   It is easy to see that if $(\theta_1, e_1):\aaa_\zeta\to \aaa_\eta$ and $(\theta_2, e_2):\aaa_\eta\to \aaa_\xi$ are extremely nice, then $(\theta_2\circ\theta_1, e_2\circ e_1)$ is extremely nice. Moreover, we consider the empty map from $\aaa_0$ into $\aaa_\xi$ to be extremely nice for any $\xi$.   

\begin{lemma} \begin{enumerate}[(i)]\item If $\zeta\leqslant \xi$, $\zeta, \xi\in \emph{\ord}$, there exists an extremely nice $(\theta, e):\aaa_\zeta\to \aaa_\xi$.  \item For $\xi\in \emph{\ord}$, if $\ccc^1, \ccc^2\subset MAX(\aaa_\xi)$, then there exists an extremely nice $(\theta, e):\aaa_\xi\to \aaa_\xi$ and $j\in \{1,2\}$ so that $e(MAX(\aaa_\xi))\subset \ccc^j$.  \item If $K_1, K_2\subset X^*$, $(x_\alpha)_{\alpha\in \aaa_\xi}$, $(f_\alpha^1)_{\alpha\in MAX(\aaa_\xi)}\subset K_1$, $(f_\alpha^2)_{\alpha\in MAX(\aaa_\xi)}\subset K_2$, and $\ee>0$ are such that for each $\alpha\in MAX(\aaa_\xi)$ and $1\leqslant i\leqslant |\alpha|$,  $(f^1_\alpha+f^2_\alpha)(x_{\alpha|_i})\geqslant \ee$, there exist $\zeta_1, \zeta_2\in \emph{\ord}$ with $\zeta_1\oplus \zeta_2=\xi$ and for $j\in \{1,2\}$, extremely nice $(\theta_j, e_j):\aaa_{\zeta_j}\to \aaa_\xi$ so that for each $\alpha\in MAX(\aaa_{\zeta_j})$ and each $1\leqslant i\leqslant |\alpha|$, $f^j_{e_j(\alpha)}(x_{\theta_j(\alpha|_i)})\geqslant \ee/2$.   \end{enumerate}

\label{useful lemma}

\end{lemma}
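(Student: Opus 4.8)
The three parts are proved by transfinite induction on $\xi$, with parts (i) and (ii) being the combinatorial backbone and (iii) the main content that will actually be applied. I would handle them in the order (i), (ii), (iii), though in practice (ii) and (iii) are proved by a simultaneous induction on $\xi$ since the successor step of (iii) needs (ii) at the same level and (iii) at lower levels. The guiding principle throughout is that $\aaa_\xi = \ttt_\xi^\mmm$ has a clean recursive structure: for $\xi$ a successor $\eta+1$, a branch of $\ttt_{\eta+1}$ starts with the node $\eta+1$ and then follows a branch of $\ttt_\eta$, so $\aaa_{\eta+1}$ is a disjoint union over $U\in\mmm$ of copies of $\aaa_\eta$ sitting below the nodes $(\eta+1,U)$; for $\xi$ a limit, $\aaa_\xi = \bigcup_{\zeta<\xi}\aaa_{\zeta+1}$ is a totally incomparable union. ``Nice'' maps are exactly the maps that respect this structure together with the directedness of $\mmm$ (the last-coordinate condition $U_m\leqslant V_n$), and ``extremely nice'' additionally tracks where maximal nodes are sent, which is what lets us read off the functional values at the very end of each branch.

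For (i): induct on $\xi$ with $\zeta$ fixed (or rather, prove for all $\zeta\leqslant\xi$ simultaneously). The base case $\zeta=0$ is the empty map. For $\zeta\leqslant\xi$ both successors or $\zeta$ a successor and $\xi$ arbitrary, pick a single initial segment of a branch of $\ttt_\xi$ of the right ordinal ``height'' to land the top node $\zeta$ of $\ttt_\zeta$ onto, then apply the inductive hypothesis below it; the last-coordinate compatibility is arranged by intersecting neighborhoods, i.e. by choosing the image node's directed-set coordinate to refine the source's — since $\mmm$ is directed this is always possible. The limit case for $\xi$ uses that $\aaa_\xi$ contains $\aaa_{\zeta+1}$ (when $\zeta<\xi$) or is handled by the successor case; the limit case for $\zeta$ uses the incomparable-union decomposition of $\aaa_\zeta$ and glues the maps obtained for each $\zeta'+1<\zeta$. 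For (ii): given $\ccc^1,\ccc^2$ partitioning (or just covering) $MAX(\aaa_\xi)$, one shows by induction that one can find an extremely nice self-map whose image of maximal nodes lands entirely in one of the two; at a successor $\xi=\eta+1$, for each $U$ one looks at the induced subsets of $MAX(\aaa_\eta)$ inside the $U$-branch, applies the inductive hypothesis to get a $j(U)\in\{1,2\}$, and then — crucially, since there are only two colors and $\mmm$ is directed — one can select a cofinal sub-directed-set of $\mmm$ on which $j(U)$ is constant, reindex $\mmm$ by that cofinal piece via a nice map, and obtain a global $j$. The limit case is similar using the incomparable union, again exploiting that only finitely many colors are in play so one color must recur cofinally.

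For (iii), the heart of the matter: induct on $\xi$. The base case $\xi=0$ is trivial ($\zeta_1=\zeta_2=0$). For $\xi=\eta+1$: a maximal branch $\alpha\in MAX(\aaa_{\eta+1})$ decomposes as the node $(\eta+1,U)$ followed by a branch of $\aaa_\eta$; at the first node we have $(f^1_\alpha+f^2_\alpha)(x_{\alpha|_1})\geqslant\ee$, so for each $\alpha$ at least one of $f^1_\alpha(x_{\alpha|_1})\geqslant\ee/2$ or $f^2_\alpha(x_{\alpha|_1})\geqslant\ee/2$ holds; this two-coloring of $MAX(\aaa_{\eta+1})$ feeds into part (ii) to produce a single $j\in\{1,2\}$ and an extremely nice self-map after which, say, $f^j_{e(\alpha)}(x_{\theta(\alpha|_1)})\geqslant\ee/2$ for all maximal $\alpha$. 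Having ``spent'' the top node on color $j$, restrict attention below the nodes $(\eta+1,U)$: there the remaining data is a family indexed by $\aaa_\eta$, and applying the inductive hypothesis gives $\zeta_1\oplus\zeta_2=\eta$ (not $\xi$!) with extremely nice maps $\aaa_{\zeta_i}\to\aaa_\eta$; composing with the inclusion $\aaa_\eta\hookrightarrow\aaa_{\eta+1}$ (below a fixed $U$) and bumping the component on side $j$ up by one — so the pair becomes $(\zeta_j+1,\zeta_{3-j})$ on the appropriate side — gives $\oplus$-sum $\eta+1=\xi$, as required; the $+1$ on side $j$ accounts for the top node we already handled. For $\xi$ a limit: each $\alpha\in MAX(\aaa_\xi)$ lives in a unique $\aaa_{\zeta+1}$ with $\zeta<\xi$; applying the inductive hypothesis at each level $\zeta+1$ gives pairs $(\eta^\zeta_1,\eta^\zeta_2)$ with $\eta^\zeta_1\oplus\eta^\zeta_2=\zeta+1$, and one of the two coordinates must be cofinal in $\xi$ (again a finite-color pigeonhole on ``which side carries most of the weight''), so along a cofinal set of $\zeta$'s the maps on that side assemble — using the incomparable-union structure of $\aaa_\xi$ — into an extremely nice map from $\aaa_\xi$ (or $\aaa_{\xi'}$ for a suitable $\xi'$ with $\xi'\oplus(\text{bounded part})=\xi$) witnessing the conclusion. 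Since the combinatorics here is genuinely identical to the treatment with the fine Schreier family $\fff_\xi$ in \cite{C2} — the only change being that $\fff_\xi$ is replaced by $\aaa_\xi=\ttt_\xi^\mmm$ and ``finite subset'' bookkeeping is replaced by the last-coordinate condition on $\mmm$ — I would state the proof at this level of detail and refer to \cite{C2} for the routine verifications of monotonicity, last-coordinate compatibility, and the Cantor-normal-form arithmetic showing the resulting exponents add up correctly under $\oplus$.

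I expect the main obstacle to be the bookkeeping in the successor step of (iii): one must be scrupulous about the fact that handling the top node costs a ``$+1$'' on exactly the side whose color was selected, and that this $+1$ combines with the lower-level $\oplus$-decomposition of $\eta$ to give $\eta+1$ rather than something off by one — the Hessenberg sum interacts with ``$+1$'' in a way that is clean only because $1=\omega^0$ sits at the bottom of every Cantor normal form, so $(\zeta_j+1)\oplus\zeta_{3-j}=(\zeta_j\oplus\zeta_{3-j})+1=\eta+1$ provided the argument at level $\eta$ is set up so that $\zeta_j\oplus\zeta_{3-j}=\eta$ exactly. Getting the inductive statement phrased so that this goes through without an off-by-one error — and so that the limit case's cofinality argument produces an honest $\oplus$-decomposition of $\xi$ — is the delicate point.
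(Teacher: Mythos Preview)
Your plan is essentially the paper's own approach: induction on $\xi$ for all three parts, with (ii) providing the stabilization tool and (iii) using it at the top node before recursing. One step is missing from your successor case of (iii), and without it the construction does not go through. After you apply the inductive hypothesis below each top node $(\eta+1,U)$, the resulting pair $(\zeta_1(U),\zeta_2(U))$ with $\zeta_1(U)\oplus\zeta_2(U)=\eta$ may depend on $U$. Your phrase ``below a fixed $U$'' is fine for the side $3-j$ map, but on side $j$ you need a nice map $\aaa_{\zeta_j+1}\to\aaa_{\eta+1}$; niceness forces $(\zeta_j+1,V)\mapsto(\eta+1,V')$ with $V'\subset V$, and the map below must then land in the copy of $\aaa_\eta$ under $(\eta+1,V')$. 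If the domains $\aaa_{\zeta_j(V')}$ vary with $V'$, there is no single $\zeta_j$ to use. The fix---carried out explicitly in the paper---is a second directed pigeonhole: since there are only finitely many pairs $(a,b)$ with $a\oplus b=\eta$, choose $\phi:\mmm\to\mmm$ with $\phi(U)\subset U$ on which the pair $(\zeta_1(\phi(U)),\zeta_2(\phi(U)))$ is constant, and replace all data by its $\phi$-reindexing before assembling. This is the same trick you already invoked in (ii), so it is bookkeeping rather than a new idea, but it is not optional.

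Two minor remarks. For (i) the paper takes a shortcut you might prefer: it cites \cite{C} for a monotone, length-preserving $\varphi:\ttt_\zeta\to\ttt_\xi$ and sets $\theta(t,\sigma)=(\varphi(t),\sigma)$, which is automatically nice because the $\mmm$-coordinate is untouched; your direct induction works but needs extra care with the last-coordinate condition. For the limit case of (iii), your ``one side cofinal, other side bounded'' picture is morally right but not literally what is used: the paper (again citing \cite{C}) extracts $M\subset[0,\xi)$ and ordinals $\gamma,\delta,(\gamma_\eta)_{\eta\in M}$ with $\zeta_2(\eta)\geqslant\delta$, $\gamma+\gamma_\eta=\zeta_1(\eta)$, and $(\gamma+\sup_\eta\gamma_\eta)\oplus\delta=\xi$, then builds $(\theta_1,e_1):\aaa_{\gamma+\sup_\eta\gamma_\eta}\to\aaa_\xi$ and $(\theta_2,e_2):\aaa_\delta\to\aaa_\xi$ from this data. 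Your deferral to \cite{C2} for this Cantor-normal-form arithmetic matches the paper's own practice.
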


\begin{proof}$(i)$ By \cite{C}, there exists $\varphi:\ttt_\zeta\to \ttt_\xi$ which is monotone and $|t|=|\varphi(t)|$ for all $t\in \ttt_\zeta$.  Then define $\theta:\aaa_\zeta\to \aaa_\xi$ by letting $\theta((t, \sigma))= (\varphi(t), \sigma)$.  It is clear that $\theta$ is nice.   Since $\aaa_\xi$ is well-founded, for each $\alpha\in \aaa_\zeta$, there exists some $\beta\in MAX(\aaa_\xi)$ extending $\theta(\alpha)$.  Let $e(\alpha)=\beta$.  Then $(\theta, e)$ is extremely nice.

$(ii)$ We prove the result by induction.  The $\xi=0$ case is vacuous. Suppose $\ccc^1\cup \ccc^2=MAX(\aaa_1)=\aaa_1=\{(1, U):U\in \mmm\}$.  Choose $\phi:\mmm\to \mmm$ and $j\in \{1,2\}$ so that $\phi(U)\subset U$ and $(1, \phi(U))\in \ccc^j$.  Let $\theta((1,U))=e((1, U))= (1, \phi(U))$.

 Assume the result holds for a given $\xi>0$ and $\ccc^1\cup \ccc^2=MAX(\aaa_{\xi+1})$. For each $U\in \mmm$ and $j\in \{1,2\}$, let $$\ccc^j(U)=\{\alpha\in MAX(\aaa_\xi): (\xi+1, U)\cat \alpha\in \ccc^j\}.$$  Note that for each $U\in \mmm$, $\ccc^1(U)\cup \ccc^2(U)=MAX(\aaa_\xi)$.  For each $U$, choose $j_U\in \{1, 2\}$,  $(\theta_U, e_U):\aaa_\xi \to \aaa_\xi$ extremely nice so that $e_U(MAX(\aaa_\xi))\subset \ccc^{j_U}(U)$.  Choose $\phi:\mmm\to \mmm$  and $j\in \{1,2\}$ so that for all $U\in \mmm$, $U\supset \phi(U)$ and $j_{\phi(U)}=j$.  Define the extremely nice $(\theta, e)$ by letting $$\theta(\xi+1, U)= (\xi+1, \phi(U)),$$ and for $t\in \ttt_\xi$, $\sigma\in \mmm^{<\nn}$ with $|t|=|\sigma|$, let  $$\theta((\xi+1, U)\cat (t, \sigma))= (\xi+1, \phi(U))\cat \theta_{\phi(U)}(t, \sigma).$$    If $\xi=0$, we let $$e(1, U)=(1, \phi(U)).$$  If $\xi>0$, $(\xi+1,U)\cat ( t, \sigma)\in MAX(\aaa_{\xi+1})$ only if $(t,\sigma)\in MAX(\aaa_\xi)$, and we let $$e((\xi+1, U)\cat (t, \sigma))=(\xi+1, \phi(U))\cat e_{\phi(U)}(t, \sigma).$$  

Assume the result holds for all $\zeta<\xi$, $\xi$ a limit ordinal.  Assume $\ccc^1\cup \ccc^2=MAX(\aaa_\xi)$.  For each $\zeta<\xi$ and $j\in \{1,2\}$, let $$\ccc^j(\zeta)=\ccc^j\cap MAX(\aaa_{\zeta+1}).$$  Then $\ccc^1(\zeta)\cup \ccc^2(\zeta)=MAX(\aaa_{\zeta+1})$.  For each $\zeta<\xi$, choose $j_\zeta\in \{1,2\}$ and an extremely nice $(\theta_\zeta, e_\zeta):\aaa_{\zeta+1}\to \aaa_{\zeta+1}$ so that $e_\zeta(\max(\aaa_{\zeta+1}))\subset \ccc^{j_\zeta}(\zeta)$.  Choose $j\in \{1,2\}$ and $\phi:[0,\xi)\to [0,\xi)$ so that for each $\zeta<\xi$, $\zeta\leqslant \phi(\zeta)$ and $j_{\phi(\zeta)}=j$.  By $(i)$, we may choose for each $\zeta<\xi$ some extremely nice $(\theta'_\zeta, e'_\zeta):\aaa_{\zeta+1}\to \aaa_{\phi(\zeta)+1}$.  Then define $\theta$ on $\aaa_\xi$ by letting $\theta|_{\ttt_{\zeta+1}}=\theta_{\phi(\zeta)}\circ \theta'_\zeta$ and define $e$ on $MAX(\aaa_\xi)$ by letting $e|_{MAX(\aaa_{\zeta+1})}= e_{\phi(\zeta)}\circ e'_\zeta$.

$(iii)$ By induction on $\xi$.  The $\xi=0$ case is trivial.  Assume the assertion holds for a given $\xi$ and $(x_\alpha)_{\alpha\in \aaa_{\xi+1}}$, $(f^1_\alpha)_{\alpha\in MAX(\aaa_{\xi+1})}\subset K_1$, $(f^2_\alpha)_{\alpha\in MAX(\aaa_{\xi+1})}\subset K_2$, and $\ee>0$  are as in the statement of $(iii)$. We first claim that we may assume without loss of generality that there exists $k\in \{1,2\}$ so that for each $U\in \mmm$ and each $\alpha\in MAX(\aaa_{\xi+1})$ with $(\xi+1, U)\preceq \alpha$, $f^k_\alpha(x_{(\xi+1, U)})\geqslant \ee/2$.      This is because if we let $\ccc^j$ consist of those $\alpha=(t, (U_1, \ldots, U_n))\in MAX(\aaa_{\xi+1})$ so that $f^j_\alpha(x_{(\xi+1, U_1)})\geqslant \ee/2$, $\ccc^1\cup \ccc^2=MAX(\aaa_{\xi+1})$.  We may then find by $(ii)$ some $k\in \{1,2\}$ and an extremely nice $(\theta, e)$ so that $e(MAX(\aaa_{\xi+1}))\subset \ccc^k$.  Then if we replace $x_\alpha$ by $x_{\theta(\alpha)}$, $f^1_\alpha$ by $f^1_{e(\alpha)}$, and $f^2_\alpha$ by $f^2_{e(\alpha)}$, the resulting collections still satisfy the hypotheses of $(iii)$ and have the additional property.  We therefore assume that $(x_\alpha)_{\alpha\in \aaa_\xi}\subset B_X$, $(f^1_\alpha)_{\alpha\in MAX(\aaa_{\xi+1})}\subset K_1$, and $(f^2_\alpha)_{\alpha\in MAX(\aaa_{\xi+1})}\subset K_2$ are as in the statement of $(iii)$ and that, without loss of generality $k=1$, so that for each $U\in \mmm$ and each $\alpha\in MAX(\aaa_{\xi+1})$ with $(\xi+1, U)\preceq \alpha$, $f^1_\alpha(x_{(\xi+1, U)})\geqslant \ee/2$. If $\xi=0$, we let $\zeta_1=1$, $\zeta_0=0$, $\theta_1(1,U)=e_1(1,U)=(1,U)$ and $\theta_2$, $e_2$ be the empty maps.  One easily checks that this completes the case $\xi+1=1$.  In the case that $\xi>0$, for each $U\in \mmm$ and $(t, \sigma)\in \aaa_\xi$, let $x_{(t, \sigma)}(U)=x_{(\xi+1, U)\cat (t, \sigma)}$.  For $j\in \{1, 2\}$ and $(t, \sigma)\in MAX(\aaa_\xi)$, let $f^j_{(t, \sigma)}(U)=f^j_{(\xi+1, U)\cat (t, \sigma)}$.   Then for each $U\in \mmm$, $(x_\alpha(U))_{\alpha\in \aaa_\xi}$, $(f^1_\alpha(U))_{\alpha\in MAX(\aaa_\xi)}$, and $(f^2_\alpha(U))_{\alpha\in MAX(\aaa_\xi)}$ satisfy the conditions required to apply the inductive hypothesis.  For $U\in \mmm$ and $j\in \{1,2\}$, there exist ordinals $\zeta_j(U)$ and extremely nice $(\theta_j^U, e_j^U):\aaa_{\zeta_j}\to \aaa_\xi$ satisfying the conclusions.  Since there are only finitely many pairs $\zeta_1, \zeta_2$ with $\zeta_1\oplus \zeta_2=\xi$, we may choose $\phi:\mmm\to \mmm$ so that $\phi(U)\subset U$ for all $U\in \mmm$ and ordinals $\zeta_1, \zeta_2$ with $\zeta_1=\zeta_1(\phi(U))$ and $\zeta_2=\zeta_2(\phi(U))$ for all $U\in \mmm$.  By replacing $x_{(t, U\cat \sigma)}$ by $x_{(t, \phi(U)\cat \sigma)}$ for each $(t, U\cat\sigma)\in \aaa_{\xi+1}$ and replacing $f^j_{(t, U\cat \sigma)}$ by $f^j_{(t, \phi(U)\cat\sigma)}$ for $j=1,2$ and all $t\in MAX(\aaa_{\xi+1})$, $\theta^U$ by $\theta^{\phi(U)}$, etc., we may assume that $\zeta_1(U)=\zeta_1$ and $\zeta_2(U)=\zeta_2$ for all $U\in \mmm$. If $\zeta_2=0$, we take $(\theta_2, e_2)$ to be the empty map.  Otherwise fix $V\in \mmm$ and let $(\theta_2, e_2):\aaa_{\zeta_2}\to \aaa_{\xi+1}$ be defined by $\theta_2(t, \sigma)=(\xi+1, V)\cat \theta_2^V(t, \sigma)$.  We similarly define $e_2$ by $e_2((t, \sigma))=(\xi+1, V)\cat e_2^V(t, \sigma)$.  If $\zeta_1=0$, we define $(\theta_1, e_1):\aaa_1\to \aaa_{\xi+1}$ by letting $\theta_1(1, U)=e_1(1,U)=(1,U)$.  If $\zeta_1>0$, we define $(\theta_1, e_1):\aaa_{\zeta_1+1} \to \aaa_{\xi+1}$ by letting $\theta_1(\zeta_1+1, U)= (\xi+1, U)$, $\theta_1((\zeta_1+1, U)\cat (t, \sigma))= (\xi+1, U)\cat \theta^U_1(t, \sigma)$.   It is straightforward to check that these maps are all well-defined and satisfy the conclusions.

Assume the result holds for every $\zeta<\xi$, $\xi$ a limit ordinal.  Assume $(x_\alpha)_{\alpha\in \aaa_\xi}$, $(f^j_\alpha)_{\alpha\in MAX(\aaa_\xi)}$ are as in the statement of $(iii)$.  For each $\eta<\xi$, apply the inductive hypothesis to $(x_\alpha)_{\alpha\in \aaa_{\eta+1}}$, $(f^j_\alpha)_{\alpha\in MAX(\aaa_{\eta+1})}$ to obtain $\zeta_1(\eta)$, $\zeta_2(\eta)$ with $\zeta_1(\eta)\oplus \zeta_2(\eta)=\eta+1$ and extremely nice $(\theta^\zeta_j, e^\zeta_j):\aaa_{\zeta_1(\eta)}\to \aaa_{\eta+1}\subset \aaa_\xi$ satisfying the conclusions.  By \cite{C}, there exist a subset $M\subset [0, \xi)$ and ordinals $\gamma$, $\delta$, and $(\gamma_\eta)_{\eta\in M}$ so that (after switching $K_1$ and $K_2$ if necessary) \begin{enumerate}[(i)]\item for each $\eta\in M$, $\zeta_2(\eta) \geqslant \delta$, \item for each $\eta\in M$, $\gamma+\gamma_\eta=\zeta_1(\eta)$, \item $(\gamma+\sup_{\eta\in M}\gamma_\eta)\oplus \delta=\xi$. \end{enumerate} 

Let $\gamma'=\sup_{\eta\in M}\gamma_\eta$. Note that property (iii) implies $\gamma+\gamma'$ and $\delta$ are both limit ordinals. We will define $(\theta_1, e_1):\aaa_{\gamma+\gamma'}\to \aaa_\xi$ and $(\theta_2, e_2):\aaa_\delta \to \aaa_\xi$ to satisfy the conclusions.  Since $(\gamma+\gamma')\oplus \delta=\xi$, this will finish the proof.  Choose any $\eta\in M$ and any extremely nice $(\theta', e'):\aaa_\delta\to \aaa_{\zeta_2(\eta)}$ and let $(\theta_2, e_2)= (\theta_{\zeta_2(\eta)} \circ \theta', e_{\zeta_2(\eta)}\circ e')$.

Choose $\phi:[0, \gamma+\gamma')\to M$ so that for each $\eta<\gamma+\gamma'$, $\eta+1\leqslant \gamma+\gamma'_{\phi(\eta)}=\zeta_1(\phi(\eta))$.  For $\eta<\gamma+\gamma'$, choose an extremely nice $(\theta'_\eta, e'_\eta):\aaa_{\eta+1}\to \aaa_{\zeta_1(\phi(\eta))}$ and define $\theta$ by letting $\theta|_{\aaa_{\eta+1}}= \theta_{\zeta_1(\phi(\eta))}\circ \theta'_\eta$ and $e|_{MAX(\aaa_{\eta+1})}= e_{\zeta_1(\phi(\eta))} \circ e'_\eta$.  

\end{proof}

\begin{proof}[Proof of Theorem \ref{Minkowski}] $(i)$ It is clear that $Sz(K), Sz(L)\leqslant Sz(K\cup L)$.  Assume $Sz(K\cup L)>\xi$.  By Lemma \ref{lemma2}, choose $(x_\alpha)_{\alpha\in \aaa_\xi}$ and $\ee>0$ so that if $\alpha=(t, (U_1, \ldots, U_n))\in \aaa_\xi$, $x_\alpha\in U_n$ and so that for each $\alpha\in \aaa_\xi$, $(x_{\alpha|_i})_{i=1}^{\alpha}\in \hhh_\ee^{K\cup L}$.  For each $\alpha\in MAX(\aaa_\xi)$, choose $x^*_\alpha\in K\cup L$ so that for each $1\leqslant i\leqslant |\alpha|$, $x^*_\alpha(x_{\alpha|_i})\geqslant \ee$.  Let $\ccc^1$ consist of those $\alpha\in MAX(\aaa_\xi)$ so that $x^*_\alpha\in K$ and $\ccc^2$ consist of those $\alpha\in MAX(\aaa_\xi)$ so that $x^*_\alpha\in L$.  By Lemma \ref{useful lemma}, there exists $j\in \{1, 2\}$ and an extremely nice $(\theta, e):\aaa_\xi\to \aaa_\xi $ so that $e(MAX(\aaa_\xi))\subset \ccc^j$.  If $j=1$, for each $\alpha\in MAX(\aaa_\xi)$, $x^*_{e(\alpha)}\in K$ and $x^*_{e(\alpha)}(x_{\theta(\alpha|_i)})\geqslant \ee$ for each $1\leqslant i\leqslant |\alpha|$.  Since $\theta$ is nice, we deduce that $(x_{\theta(\alpha)})_{\alpha\in \aaa_\xi}$ and $(x^*_{e(\alpha)})_{\alpha\in MAX(\aaa_\xi)}$ witness the fact that $o_w(\hhh^K_\ee)>\xi$.  If $j=2$, we similarly deduce that $o_w(\hhh^L_\ee)>\xi$. Since $\xi$ was arbitrary, this completes $(i)$.

$(ii)$ This is an immediate consequence of Lemma \ref{lemma coup de grace}$(iii)$ and Theorem \ref{main theorem}.

$(iii)$ We will show that $o_w(\hhh^{K+L}_\ee)\leqslant o_w(\hhh^K_{\ee/2})\oplus o_w(\hhh^L_{\ee/2})$ for all $\ee>0$.  Assume $o_w(\hhh^K_{\ee/2})=\eta_1\in \ord$ and $o_w(\hhh^L_{\ee/2})= \eta_2\in \ord$.  Suppose $\xi=\eta_1\oplus \eta_2<o_w(\hhh^{K+L}_\ee)$.  Choose $(x_\alpha)_{\alpha\in \aaa_\xi}$ according to Lemma \ref{lemma2}. For each $\alpha\in MAX(\aaa_\xi)$, choose $f^1_\alpha\in K$ and $f^2_\alpha\in L$ so that for each $1\leqslant i\leqslant |\alpha|$, $(f^1_\alpha+f^2_\alpha)(x_{\alpha|_i})\geqslant \ee$.   By Lemma \ref{useful lemma}, there exist $\zeta_1, \zeta_2$ with $\zeta_1\oplus\zeta_2=\xi$ and for $j=1,2$ some extremely nice $(\theta_j, e_j):\aaa_{\zeta_j}\to \aaa_\xi$.  Then $(x_{\theta_1(\alpha)})_{\alpha\in \aaa_{\zeta_1}}$ and $(f^1_{e_1(\alpha)})_{\alpha\in MAX(\aaa_{\zeta_1})}$ can be used to deduce that $\zeta_1< o_w(\hhh^K_{\ee/2})$.  Similarly, $(x_{\theta_2(\alpha)})_{\alpha\in \aaa_{\zeta_2}}$ and $(f^2_{e_2(\alpha)})_{\alpha\in \aaa_{\zeta_2}}$ used to deduce that $\zeta_2<o_w(\hhh^L_{\ee/2})$.   Then $\zeta_1<\eta_1$ and $\zeta_2<\eta_2$, whence $$\xi=\zeta_1\oplus \zeta_2 < \eta_1\oplus \eta_2=\xi,$$ a contradiction.

$(iv)$ If both sets are norm compact, then so is the sum, and the result follows from Proposition \ref{special form}.  If the maximum is $\infty$, the result follows from $(ii)$.  Otherwise $\max\{Sz(K), Sz(L)\}=\omega^\xi$ for some $\xi>0$, and $o_w(\hhh^K_{\ee/2}), o_w(\hhh^L_{\ee/2})$, and therefore $o_w(\hhh^K_{\ee/2})\oplus o_w(\hhh^L_{\ee/2})$, are less than $\omega^\xi$ by Proposition \ref{special form}.  In this case, the result follows from $(iii)$.

\end{proof}

\begin{remark} We wish to thank P.A.H. Brooker for bringing the following observation to our attention.  Suppose $\phi:\ord\times \ord\to \ord$ is such that $Sz(K+L)\leqslant \phi(Sz(K), Sz(L))$ for arbitrary Banach spaces $X$ and arbitrary, $w^*$ compact, non-empty subsets $K, L\subset X^*$.  Then if $K\subset X^*$ and $L\subset Y^*$ are non-empty and $w^*$ compact, $K\times L= K+L\subset X^*\oplus Y^*$, and we deduce $Sz(K\times L)\leqslant \phi(Sz(K), Sz(L))$.  This is because $K\subset X^*$ has the same Szlenk index as $K+\{0\}\subset X^*\oplus Y^*$, and similarly for $L\subset Y^*$.  Conversely, if $\psi:\ord\times \ord\to \ord$ is such that $Sz(K\times L)\leqslant \psi(Sz(K), Sz(L))$ for arbitrary Banach spaces $X, Y$ and arbitrary $w^*$ compact, non-empty subsets $K\subset X^*$ and $L\subset Y^*$.  Then for any Banach space $X$, let $D:X\to X\oplus X$ be defined by $Dx=(x,x)$, so $D^*(x^*, y^*)=x^*+y^*$.  Then $D^*(K\times L)=K+L\subset X^*$.  It is easy to see (and we will offer a rigorous proof later) that $Sz(K+L)\leqslant Sz(D^*(K\times L))\leqslant Sz(K\times L)$, we deduce that $Sz(K+L)\leqslant \psi(Sz(K), Sz(L))$.  Thus any estimates for the Szlenk index of a Minkowski sum in terms of the indices of the individual summands yield the same estimates of Cartesian products in terms of the individual factors, and conversely.  

\end{remark}

\subsection{Szlenk index of an operator} If $X, Y$ are Banach spaces and $A:X\to Y$ is an operator, we define the Szlenk index of the operator $A$ to be $Sz(A)=Sz(A^*B_{Y^*})$.  The main result of this subsection is the following, the first statement of which was originally shown by Brooker \cite{B}, where the slicing definition of the Szlenk index was used.    The second statement of the Theorem was shown by Hajek and Lancien \cite{HL} using the slicing definition, as well as by Odell, Schlumprecht, and Zs\'{a}k \cite{OSZ} in the case that $X$ and $Y$ are separable.

\begin{theorem} For every $\xi\in \emph{\ord}$, the class of operators having Szlenk index not exceeding $\omega^\xi$ is a closed operator ideal.  In particular, $Sz(X\oplus Y)= \max\{Sz(X), Sz(Y)\}$ for any $X, Y\in \emph{\Ban}$.  

\label{operator ideal}
\end{theorem}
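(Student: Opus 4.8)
\textbf{Proof strategy for Theorem \ref{operator ideal}.}

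The plan is to verify the four operator-ideal axioms for the class $\mathcal{J}_\xi$ of operators $A$ with $Sz(A)\leqslant \omega^\xi$, using Theorem \ref{main theorem} to replace slicings by weak orders $o_w(\hhh^K_\ee)$ throughout, and then deduce the statement about direct sums as a corollary. First I would observe that for an operator $A:X\to Y$, membership $A\in\mathcal{J}_\xi$ is equivalent, via Theorem \ref{main theorem}, to the condition that $\sup_{\ee>0}o_w(\hhh^{A^*B_{Y^*}}_\ee)\leqslant\omega^\xi$. The ideal property under composition on the right, $A\in\mathcal{J}_\xi \Rightarrow AB\in\mathcal{J}_\xi$ for any operator $B:W\to X$, is the easy direction: $(AB)^*B_{W^*}=B^*A^*B_{W^*}\subseteq \|B\|A^*B_{Y^*}$, and scaling a $w^*$ compact set by a constant only rescales the $\ee$'s, so $o_w(\hhh^{(AB)^*B_{W^*}}_\ee)\leqslant o_w(\hhh^{A^*B_{Y^*}}_{\ee/\|B\|})\leqslant \omega^\xi$. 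For composition on the left, $CA$ with $C:Y\to Z$, one uses that $A^*$ restricted to the unit ball maps into $\|A\|B_{X^*}$ and that $(CA)^*B_{Z^*}=A^*C^*B_{Z^*}\subseteq \|C\|A^*B_{Y^*}$, so the same rescaling argument applies. The point is that both one-sided ideal properties reduce to the trivial monotonicity of $o_w$ under inclusion of $w^*$ compact sets together with the scaling behavior in $\ee$.

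The substantive axiom is closure under addition: if $A_1,A_2:X\to Y$ both lie in $\mathcal{J}_\xi$, then $A_1+A_2\in\mathcal{J}_\xi$. Here I would use that $(A_1+A_2)^*B_{Y^*}\subseteq A_1^*B_{Y^*}+A_2^*B_{Y^*}$, a Minkowski sum of two $w^*$ compact sets, and then invoke Theorem \ref{Minkowski}$(iii)$: there is a constant $C$ with $Sz_\ee\big(A_1^*B_{Y^*}+A_2^*B_{Y^*}\big)\leqslant Sz_{\ee/C}(A_1^*B_{Y^*})\oplus Sz_{\ee/C}(A_2^*B_{Y^*})$ for all $\ee>0$. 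Since $A_i\in\mathcal{J}_\xi$, by Theorem \ref{main theorem} and the characterization of which ordinals arise as suprema (Proposition \ref{special form}$(iii)$, noting $B_{Y^*}$, hence $A_i^*B_{Y^*}$, is convex so the relevant sup is $0$ or of the form $\omega^{\eta_i}$ with $\eta_i\leqslant\xi$), each $Sz_{\ee/C}(A_i^*B_{Y^*})<\omega^\xi$ whenever $\omega^\xi>0$; as $\omega^\xi$ is a gamma number, the Hessenberg sum of two ordinals each $<\omega^\xi$ is again $<\omega^\xi$. Taking the sup over $\ee$ gives $Sz(A_1+A_2)\leqslant\omega^\xi$. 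Closedness of $\mathcal{J}_\xi$ in operator norm is proved similarly: if $A_n\to A$ with $A_n\in\mathcal{J}_\xi$, write $A=A_n+(A-A_n)$ and use that $\|A-A_n\|\to 0$; for $\ee$ fixed, once $\|A-A_n\|$ is small enough the set $(A-A_n)^*B_{Y^*}$ is contained in a ball of radius $<\ee/4$ say, which contributes nothing to $s_\ee$ of the Minkowski sum beyond a norm-compact perturbation, so $Sz_\ee(A^*B_{Y^*})\leqslant Sz_{\ee/C}(A_n^*B_{Y^*})\oplus Sz_{\ee/C}((A-A_n)^*B_{Y^*})$ with the second term equal to $1$, hence $<\omega^\xi$; again taking sups finishes it. (One should also note $\mathcal{J}_\xi$ contains all finite-rank operators, since a finite-rank operator has $A^*B_{Y^*}$ norm compact, hence Szlenk index $\leqslant\omega^0=1\leqslant\omega^\xi$; and it is nonzero.)

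For the final assertion about $Sz(X\oplus Y)$, I would identify $(X\oplus Y)^*$ with $X^*\oplus Y^*$ and note $B_{(X\oplus Y)^*}$ is (up to an equivalent renorming, which does not affect the Szlenk index) comparable to $B_{X^*}\times B_{Y^*}=B_{X^*}\times\{0\}+\{0\}\times B_{Y^*}$; by the remark following Theorem \ref{Minkowski} relating Cartesian products and Minkowski sums, together with Theorem \ref{Minkowski}$(iv)$ (both summands are convex), one gets $Sz(B_{X^*}\times B_{Y^*})=\max\{Sz(B_{X^*}),Sz(B_{Y^*})\}=\max\{Sz(X),Sz(Y)\}$. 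Alternatively, and perhaps more cleanly, one deduces it from the ideal property: the inclusions $X\hookrightarrow X\oplus Y$ and $X\oplus Y\twoheadrightarrow X$ show $Sz(X)\leqslant Sz(X\oplus Y)$ (and likewise for $Y$), while $I_{X\oplus Y}=J_X P_X+J_Y P_Y$ expresses the identity on $X\oplus Y$ as a sum of two operators whose Szlenk indices equal $Sz(X)$ and $Sz(Y)$ respectively, so it lies in $\mathcal{J}_\xi$ for $\omega^\xi=\max\{Sz(X),Sz(Y)\}$ (using that each $Sz$ is $0$ or a power of $\omega$ by Proposition \ref{special form}), giving $Sz(X\oplus Y)=Sz(I_{X\oplus Y})\leqslant\omega^\xi=\max\{Sz(X),Sz(Y)\}$. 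The main obstacle I anticipate is bookkeeping the constants and the reduction to the case where the relevant suprema are honest powers of $\omega$ (rather than merely ordinals $<\omega^\xi$) so that the gamma-number closure of $\oplus$ can be applied cleanly; everything else is a routine translation of the classical ideal argument into the $o_w$ language supplied by Theorem \ref{main theorem} and Theorem \ref{Minkowski}.
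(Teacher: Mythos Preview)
Your overall architecture matches the paper's, but there is one genuine error and two places where the paper's route is cleaner.

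The error is in your right-composition argument. With $B:W\to X$ and $A:X\to Y$, you write $(AB)^*B_{W^*}=B^*A^*B_{W^*}\subseteq \|B\|A^*B_{Y^*}$. The correct object is $(AB)^*B_{Y^*}=B^*A^*B_{Y^*}$, which lives in $W^*$, whereas $A^*B_{Y^*}$ lives in $X^*$; there is no inclusion to invoke, and monotonicity of $o_w$ under set inclusion does not apply across different dual spaces. The paper handles this via Lemma \ref{compose}(i), which proves $Sz(B^*K)\leqslant Sz(K)$ by pushing sequences forward: if $(x_i)\in\hhh^{B^*K}_\ee$ then $(Bx_i)\in\hhh^{K}_\ee$, and this respects the weak derivation. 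Your left-composition argument, by contrast, is fine and is exactly how the paper obtains the other half of Lemma \ref{compose}(ii).

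For closure under sums the paper bypasses your detour through Theorem \ref{Minkowski}(iii) and the gamma-number bookkeeping: since $A_1^*B_{Y^*}$ and $A_2^*B_{Y^*}$ are convex, Theorem \ref{Minkowski}(iv) gives $Sz(A_1^*B_{Y^*}+A_2^*B_{Y^*})=\max\{Sz(A_1),Sz(A_2)\}$ directly. For norm-closedness the paper's argument is also more elementary than yours: rather than writing $A=A_n+(A-A_n)$ and feeding through the Minkowski estimate (which forces you to track Proposition \ref{special form}(iii) to ensure $Sz_{\ee/C}(A_n^*B_{Y^*})<\omega^\xi$ strictly), the paper shows the complement is open by observing that if $o_w(\hhh^{A^*B_{Y^*}}_{2\ee})>\omega^\xi$ and $\|A-B\|<\ee$, then $\hhh^{B^*B_{Y^*}}_\ee\supset \hhh^{A^*B_{Y^*}}_{2\ee}$ (an $\ee$-perturbation of a sequence whose convex combinations have image-norm $\geqslant 2\ee$ still has convex combinations of image-norm $\geqslant\ee$), so $Sz(B)>\omega^\xi$ as well. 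Your second argument for $Sz(X\oplus Y)=\max\{Sz(X),Sz(Y)\}$, via $I_{X\oplus Y}=P_X+P_Y$ and the ideal property, is exactly the paper's.
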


By Proposition \ref{special form}, the Szlenk index of an operator must be of the form $\omega^\xi$ for some $\xi\in \ord$, so considering the classes bounded only by gamma numbers $\omega^\xi$ loses no generality.  

Before proceeding to the proof, we separate the following result which was promised above.  

\begin{lemma} Let $X,Y,Z$ be Banach spaces and let $A:X\to Y$, $B:Y\to Z$ be operators.  \begin{enumerate}[(i)]\item For any $K\subset Y^*$ $w^*$ compact and non-empty, $Sz(A^*K)\leqslant Sz(K)$. \item $Sz(AB)\leqslant \min \{Sz(A),Sz(B)\}$.   \end{enumerate}   

\label{compose}
\end{lemma}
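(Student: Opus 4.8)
The plan is to use Theorem \ref{main theorem} to translate everything into statements about the weak orders of the trees $\hhh^K_\ee$, and then exploit the simple relationship between these trees under pullback by an adjoint. The key computation is this: if $A:X\to Y$ is an operator with $\|A\|\leqslant 1$ (we may normalize, replacing $A$ by $A/\|A\|$ at the cost of scaling $\ee$), then for $t=(x_1,\ldots,x_n)\in B_X^{<\nn}$ and any $w^*$-compact $\varnothing\neq K\subset Y^*$, the sequence $t$ lies in $\hhh^{A^*K}_\ee$ if and only if there is $y^*\in K$ with $(A^*y^*)(x_i)=y^*(Ax_i)\geqslant\ee$ for all $i$, i.e.\ if and only if $(Ax_1,\ldots,Ax_n)\in \hhh^K_\ee$ (viewed in $Y$). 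I expect to prove first that $A$ induces, for every $\xi$, an order-preserving correspondence showing $o_w(\hhh^{A^*K}_\ee)\leqslant o_w(\hhh^K_\ee)$ for every $\ee>0$; part $(i)$ then follows by taking the supremum over $\ee$ and invoking Theorem \ref{main theorem}.

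For the induced-correspondence step, I would argue by transfinite induction on $\xi$ that if $o_w(\hhh^{A^*K}_\ee)>\xi$ then $o_w(\hhh^K_\ee)>\xi$, using Lemma \ref{lemma2}: given a collection $(x_\alpha)_{\alpha\in\aaa_\xi}\subset B_X$ witnessing $o_w(\hhh^{A^*K}_\ee)>\xi$ (so $x_\alpha\in U_n$ for $\alpha=(t,(U_1,\ldots,U_n))$, and every branch lies in $\hhh^{A^*K}_\ee$), one sets $y_\alpha=Ax_\alpha\in B_Y$; then every branch $(y_{\alpha|_i})_{i=1}^{|\alpha|}=(Ax_{\alpha|_i})_{i=1}^{|\alpha|}$ lies in $\hhh^K_\ee$ by the computation above. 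The only subtlety is the membership condition $y_\alpha\in V_n$ for weak neighborhoods $V_n$ of $0$ in $Y$: since $A$ is weak-to-weak continuous, $A^{-1}(V_n)$ is a weak neighborhood of $0$ in $X$, so by passing (via a nice map $\aaa_\xi\to\aaa_\xi$, as described before Lemma \ref{useful lemma}) to indices whose last-coordinate neighborhood $U_n$ is contained in $A^{-1}(V_n)$ — equivalently, by reindexing the directed set $\mmm(X)$ along the cofinal family $\{A^{-1}(V):V\in\mmm(Y)\}$ — we obtain a genuine $D$-tree in $\hhh^K_\ee$. Applying Lemma \ref{lemma2} in $Y$ gives $o_w(\hhh^K_\ee)>\xi$, completing the induction.

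Part $(ii)$ follows from $(i)$ together with Lemma \ref{lemma coup de grace}-type monotonicity. For $Sz(AB)\leqslant Sz(B)$: note $(AB)^*B_{Z^*}=B^*A^*B_{Z^*}\subset \|A\|B^*B_{Z^*}$, and the Szlenk index is scale-invariant (replacing $K$ by $cK$ just rescales $\ee$), while $Sz$ is monotone under inclusion of $w^*$-compact sets because the trees $\hhh^K_\ee$ are monotone in $K$; hence $Sz(AB)=Sz(B^*(A^*B_{Z^*}))\leqslant Sz(\|A\|B^*B_{Z^*})=Sz(B^*B_{Z^*})=Sz(B)$, where the middle inequality also uses part $(i)$ applied to $B$ with $K=A^*B_{Z^*}\subset Y^*$. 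For $Sz(AB)\leqslant Sz(A)$: here $(AB)^*B_{Z^*}=B^*A^*B_{Z^*}$, and since $\|B^*\|\leqslant\|B\|$ we have $B^*(A^*B_{Z^*})\subset$ a multiple of... actually more directly, $(AB)^*B_{Z^*}=B^*(A^*B_{Z^*})$ and applying part $(i)$ to the operator $B^*$ — i.e.\ applying $(i)$ with the adjoint pullback by $B$ — gives $Sz(B^*(A^*B_{Z^*}))\leqslant Sz(A^*B_{Z^*})=Sz(A)$. The main obstacle, and the one place requiring care, is the weak-neighborhood bookkeeping in the induction for $(i)$: making sure that the weak-to-weak continuity of $A$ lets one match the directed set $\mmm(X)$ against $\mmm(Y)$ cofinally, so that the image tree is an honest $D$-tree in the sense of Lemma \ref{lemma2}. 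Everything else is a routine application of Theorem \ref{main theorem} and the monotonicity/scale-invariance of the $\hhh^K_\ee$ construction.
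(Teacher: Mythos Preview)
Your proposal is correct. For part $(ii)$ your argument is the paper's: one bound comes from $(i)$, the other from the inclusion $A^*B_{Z^*}\subset B_{Y^*}$ after normalizing $\|A\|\leqslant 1$, together with monotonicity of $Sz$ under inclusion. (Your aside that ``the middle inequality also uses part $(i)$'' is superfluous there; only monotonicity is needed for that bound. There is also a harmless subscript slip: $\|A\|B^*B_{Z^*}$ should read $\|A\|B^*B_{Y^*}$.)

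For part $(i)$ your route works, but the paper's is shorter and sidesteps exactly the bookkeeping you flag as ``the main obstacle.'' You go through Lemma \ref{lemma2}: pull a $\mmm(X)$-tree $(x_\alpha)_{\alpha\in\aaa_\xi}$ witnessing $o_w(\hhh^{A^*K}_\ee)>\xi$, push it forward by $A$, and then reindex via $V\mapsto A^{-1}(V)$ to obtain a genuine $\mmm(Y)$-tree in $\hhh^K_\ee$. This is fine (indeed $A^{-1}(V)\in\mmm(X)$ because the defining functionals of $V$ pull back under $A^*$), but the reindexing between $\mmm(X)$-indexed and $\mmm(Y)$-indexed copies of $\aaa_\xi$ is an avoidable detour. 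The paper instead proves directly, by induction on $\xi$, that $A\bigl((\hhh^{A^*K}_\ee)^\xi_w\bigr)\subset(\hhh^K_\ee)^\xi_w$: in the successor step, if $s\in(\hhh^{A^*K}_\ee)^{\xi+1}_w$ one picks a weakly null net $(x_\lambda)\subset B_X$ with $s\cat x_\lambda\in(\hhh^{A^*K}_\ee)^\xi_w$; then $(Ax_\lambda)\subset B_Y$ is weakly null by weak--weak continuity, and by the inductive hypothesis $A(s)\cat Ax_\lambda\in(\hhh^K_\ee)^\xi_w$, so $A(s)\in(\hhh^K_\ee)^{\xi+1}_w$. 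Working at the level of the derivation rather than the tree witnesses absorbs the weak-continuity issue in a single line (the image of a weakly null net is weakly null), and no matching of directed sets is required. Your approach buys nothing extra here; the paper's is simply cleaner.
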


\begin{proof}$(i)$ Since it is clear that for any $c, \ee>0$ and any $L\subset X^*$ $w^*$ compact and non-empty, $\hhh^L_\ee= \hhh^{cL}_{c\ee}$, we may assume that $\|A\|\leqslant 1$.  Given $\hhh\subset B_X^{<\nn}$, let $A(\hhh)=\{S(t): t\in \hhh\}$, where $A((x_i)_{i=1}^n)= (Ax_i)_{i=1}^n$ and $A(\varnothing)=\varnothing$.  We claim that $A((\hhh^{A^*K}_\ee)^\xi_w)\subset (\hhh^K_\ee)_w^\xi$ for any $\xi\in \ord$, which will finish $(i)$.  We prove the result by induction on $\xi$, noting that the base case and limit ordinal cases are trivial.  Fix $s\in (\hhh^{A^*K}_\ee)_w^{\xi+1}$.  Choose a weakly null net $(x_\lambda)\subset B_X$ so that $s\cat x_\lambda\in (\hhh^{A^*K}_\ee)_w^\xi$ for all $\lambda$.  Then $(Ax_\lambda)_\lambda\subset B_Y$ is weakly null, and by the inductive hypothesis, $A(s)\cat A(x_\lambda)\in (\hhh^K_\ee)_w^\xi$, yielding that $A(s)\in (\hhh^K_\ee)_w^{\xi+1}$.

$(ii)$  By $(i)$, $$Sz(AB)=Sz(B^*A^*B_{Z^*})\leqslant Sz(A^*B_{Z^*})=Sz(A).$$ As in $(i)$, we may assume $\|A\|\leqslant 1$, so $B^*A^*B_{Z^*} \subset B^*B_{Y^*}$, yielding $Sz(AB)\leqslant B$.

\end{proof}

\begin{proof}[Proof of Theorem \ref{operator ideal}]

For $X,Y\in \Ban$, let $\mathfrak{Sz}_\xi(X,Y)$ denote the operators from $X$ to $Y$ having Szlenk index not exceeding $\omega^\xi$.  Let $\mathfrak{Sz}_\xi$ consist of the class of all operators lying in one of the components $\mathfrak{Sz}_\xi(X,Y)$, $X,Y\in\Ban$.

First, we note that for $(x_i)_{i=1}^n\in B_X^{<\nn}$, $(x_i)_{i=1}^n\in \hhh^{A^*B_{Y^*}}_\ee$ if and only if there exists $y^*\in B_{Y^*}$ so that $A^*y^*(x_i)=y^*(Ax_i)\geqslant \ee$ for each $1\leqslant i\leqslant n$.  By the Hahn-Banach theorem, this is equivalent to the condition that every convex combination of $(Ax_i)_{i=1}^n$ has norm at least $\ee$.  We will use this characterization throughout the proof.  

We know from Proposition \ref{special form} and Schauder's theorem that the members of $\mathfrak{Sz}_0(X,Y)$ are precisely the compact operators from $X$ to $Y$.  Thus $\mathfrak{Sz}_\xi$ contains all finite rank operators for any $\xi\in \ord$.  

If $A, B:X\to Y$ both have Szlenk index not exceeding $\omega^\xi$, note that $(A+B)^*B_{Y^*}\subset A^*B_{Y^*}+ B^* B_{Y^*}$, and $A^* B_{Y^*}, B^* B_{Y^*}$ are convex.  By \ref{Minkowski}$(iv)$, $$Sz((A^*+B^*)B_{Y^*})\leqslant Sz(A^*B_{Y^*}+B^*B_{Y^*})=\max\{Sz(A), Sz(B)\}\leqslant \omega^\xi.$$  Thus $\mathfrak{Sz}_\xi(X,Y)$ is closed under finite sums.

By Lemma \ref{compose}, for any $A:W\to X$, $B:X\to Y$, and $C:Y\to Z$, $Sz(ABC)\leqslant Sz(B)$, so that if $B\in \mathfrak{Sz}_\xi$, $ABC\in \mathfrak{Sz}_\xi$.  

Last, assume $A:X\to Y$ is an operator with $Sz(A)>\omega^\xi$.   Then there exists $\ee>0$ so that $o_w(\hhh^{A^*B_{Y^*}}_{2\ee})>\xi$.  Then for any $B:X\to Y$ with $\|A-B\|< \ee$, $\hhh^{B^*B_{Y^*}}_\ee \supset \hhh^{A^*B_{Y^*}}_{2\ee}$.  This is because if $(x_i)_{i=1}^n\in B_X^{<\nn}$ is such that all convex combinations of $(Ax_i)_{i=1}^n$ have norm at least $2\ee$, then since $(Bx_i)_{i=1}^n$ is an $\ee$-perturbation of $(Ax_i)_{i=1}^n$, all convex combinations of $(Bx_i)_{i=1}^n$ have norm at least $\ee$.  Of course, this means that for all $\zeta\in \ord$, $(\hhh^{B^*B_{Y^*}}_\ee)_w^\zeta\supset (\hhh^{A^*B_{Y^*}}_{2\ee})_w^\zeta$, and $\omega^\xi< o_w(\hhh^{A^*B_{Y^*}}_{2\ee}) \leqslant o_w(\hhh^{B^*B_{Y^*}}_\ee)$.  Thus we have shown that the complement $\mathfrak{B}(X,Y)\setminus \mathfrak{Sz}_\xi(X,Y)$ of $\mathfrak{Sz}_\xi(X,Y)$ in the space of operators $\mathfrak{B}(X,Y)$ from $X$ to $Y$ is norm open, whence $\mathfrak{Sz}_\xi(X,Y)$ is norm closed.  

The second statement follows from the fact that for any $X, Y\in \Ban$, if $P_X, P_Y$ are the projections from $X\oplus Y$ to $X, Y$, respectively, $Sz(P_X), Sz(P_Y)\leqslant \max\{Sz(X), Sz(Y)\}$ and $$Sz(X\oplus Y)= Sz(P_X+P_Y)\leqslant \max\{Sz(P_X), Sz(P_Y)\}\leqslant \max\{Sz(X), Sz(Y)\}.$$  Since $I_X, I_Y$ both factor through $I_{X\oplus Y}$, the ideal property gives that $Sz(X), Sz(Y)\leqslant Sz(X\oplus Y)$.  

\end{proof}

\subsection{Combinatorial interpretation of sums}

In this subsection, we discuss the results above in terms of finite colorings, generalizing the specific applications above.  We omit the proofs, since they are inessential modifications of the results above.  

\begin{proposition} Let $D$ be a directed set, $\xi\in \emph{\ord}$, $n\in \nn$.  \begin{enumerate}[(i)]\item Suppose that for $1\leqslant j\leqslant n$, $\ccc^j\subset MAX(\ttt^D_\xi)$, and $\cup_{j=1}^n \ccc^j=MAX(\ttt_\xi^D)$.  Then there exists an extremely nice $(\theta, e):\ttt_\xi^D\to \ttt_\xi^D$ and $j\in \{1, \ldots, n\}$ so that $e(MAX(\ttt_\xi^D))\subset \ccc^j$. 

\item If for each $1\leqslant j\leqslant n$, $\ccc^j\subset \ttt^D_\xi$ is downward closed with respect to $\preceq$, and if $\cup_{j=1}^n \ccc^j=\ttt_\xi^D$, then there exists $1\leqslant j\leqslant n$ and a nice $\theta:\ttt^D_\xi\to \ttt^D_\xi$ so that $\theta(\ttt^D_\xi)\subset \ccc^j$. 

\item Suppose that for each $\tau\in \ttt^D_\xi$, $\ccc^j(\tau)$, $1\leqslant j\leqslant n$ are such that $\cup_{n=1}^n \ccc^j(\tau)=\{\tau_0\in MAX(\ttt^D_\xi): \tau\preceq \tau_0\}$.  Then there exist $\zeta_1, \ldots, \zeta_n$ so that $\zeta_1\oplus \ldots \oplus \zeta_n=\xi$ and for each $1\leqslant j\leqslant n$ extremely nice $(\theta_j, e_j):\ttt^D_{\zeta_j}\to \ttt^D_\xi$ so that for each $1\leqslant j\leqslant n$, each $\tau\in MAX(\ttt^D_{\zeta_j})$, $e_j(\tau)\in \cap_{i=1}^{|\tau|} \ccc^j(\theta(\tau|_i))$. 

\item Suppose that for each $1\leqslant j\leqslant n$, $\ccc^j\subset \ttt^D_\xi$, and $\cup_{j=1}^n \ccc^j=\ttt_\xi^D$.  Then there exist $\zeta_1, \ldots, \zeta_n$ so that $\zeta_1\oplus \ldots \oplus\zeta_n= \xi$ and nice $\theta_j:\ttt^D_{\zeta_j}\to \ttt^D_\xi$ so that for each $1\leqslant j\leqslant n$, $\theta_j(\ttt^D_{\zeta_j})\subset \ccc^j$. \end{enumerate}

\label{sum summary}

\end{proposition}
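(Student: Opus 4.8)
\emph{Proof plan.} The plan is to deduce all four items from the two‑colour (and trivial one‑colour) versions already established in the paper; no new transfinite induction on $\xi$ is needed, which is precisely what is meant above by ``inessential modifications''. I would prove (i) by induction on the number of colours $n$: the case $n=1$ is trivial, and the case $n=2$ is Lemma \ref{useful lemma}(ii), whose proof uses $\mmm$ only through its being a directed set and hence applies with $\mmm$ replaced by an arbitrary directed $D$. For the step, set $\ddd^2=\bigcup_{j=2}^n\ccc^j$ and apply the case $n=2$ to $\{\ccc^1,\ddd^2\}$: if the resulting extremely nice $(\theta,e)$ satisfies $e(MAX(\ttt^D_\xi))\subset\ccc^1$ we are done, and otherwise $e(MAX(\ttt^D_\xi))\subset\ddd^2$, so the $(n-1)$‑colouring $\widetilde{\ccc}^j:=\{\beta\in MAX(\ttt^D_\xi):e(\beta)\in\ccc^j\}$ ($2\leqslant j\leqslant n$) covers $MAX(\ttt^D_\xi)$; the inductive hypothesis then provides an extremely nice $(\theta',e')$ and a $j_0$ with $e'(MAX(\ttt^D_\xi))\subset\widetilde{\ccc}^{j_0}$, and $(\theta\circ\theta',e\circ e')$ — extremely nice, since compositions of extremely nice pairs are — witnesses (i).

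Item (ii) then follows from (i) with no further induction. Given downward closed $\ccc^1,\ldots,\ccc^n$ covering $\ttt^D_\xi$, their traces on $MAX(\ttt^D_\xi)$ cover it, so (i) yields an extremely nice $(\theta,e):\ttt^D_\xi\to\ttt^D_\xi$ and a $j$ with $e(MAX(\ttt^D_\xi))\subset\ccc^j$. For arbitrary $\sigma\in\ttt^D_\xi$, choose (using well‑foundedness of $\ttt^D_\xi$) a maximal $\sigma_0\succeq\sigma$; monotonicity of $\theta$ together with the inequality $\theta(\sigma_0)\preceq e(\sigma_0)$ gives $\theta(\sigma)\preceq\theta(\sigma_0)\preceq e(\sigma_0)\in\ccc^j$, and downward closedness of $\ccc^j$ forces $\theta(\sigma)\in\ccc^j$; hence $\theta(\ttt^D_\xi)\subset\ccc^j$.

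For (iii) I would again induct on $n$. The case $n=1$ is trivial ($\zeta_1=\xi$, $\theta_1$ the identity, $e_1$ an extension‑to‑maximal map), and the case $n=2$ is exactly the transfinite induction carried out in the proof of Lemma \ref{useful lemma}(iii), which uses the functionals $f^j_\alpha$ only through the colour classes they determine and hence proves the abstract per‑node two‑colouring statement verbatim. For the step, put $\ddd^2(\tau)=\bigcup_{j=2}^n\ccc^j(\tau)$, apply the case $n=2$ to $\{\ccc^1,\ddd^2\}$ to obtain $\eta_1\oplus\eta_2=\xi$ with extremely nice $(\theta_1,e_1):\ttt^D_{\eta_1}\to\ttt^D_\xi$ — already the colour‑$1$ output, with $\zeta_1=\eta_1$ — and $(\theta_2,e_2):\ttt^D_{\eta_2}\to\ttt^D_\xi$ such that $e_2(\sigma)\in\bigcap_{i=1}^{|\sigma|}\ddd^2(\theta_2(\sigma|_i))$ for every $\sigma\in MAX(\ttt^D_{\eta_2})$. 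On $\ttt^D_{\eta_2}$ define the per‑node colouring $\eee^j(\sigma)=\{\sigma_0\in MAX(\ttt^D_{\eta_2}):\sigma\preceq\sigma_0,\ e_2(\sigma_0)\in\ccc^j(\theta_2(\sigma))\}$, $2\leqslant j\leqslant n$; instantiating the displayed membership of $e_2(\sigma_0)$ at $i=|\sigma|$ shows $\bigcup_{j=2}^n\eee^j(\sigma)=\{\sigma_0\in MAX(\ttt^D_{\eta_2}):\sigma\preceq\sigma_0\}$, so the inductive hypothesis supplies $\zeta_2\oplus\cdots\oplus\zeta_n=\eta_2$ and extremely nice $(\psi_j,d_j):\ttt^D_{\zeta_j}\to\ttt^D_{\eta_2}$ with the required property; then $(\theta_2\circ\psi_j,\,e_2\circ d_j)$ is the colour‑$j$ output, and $\zeta_1\oplus\cdots\oplus\zeta_n=\eta_1\oplus\eta_2=\xi$ by associativity of $\oplus$ (immediate from the Cantor normal form). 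Finally, (iv) is the special case of (iii) obtained by taking $\ccc^j(\tau)=\{\tau_0\in MAX(\ttt^D_\xi):\tau\preceq\tau_0\}$ if $\tau\in\ccc^j$ and $\ccc^j(\tau)=\varnothing$ otherwise — the covering hypothesis of (iii) holds because each $\tau\in\ttt^D_\xi$ lies in some $\ccc^j$ — since then $e_j(\tau)\in\bigcap_{i=1}^{|\tau|}\ccc^j(\theta_j(\tau|_i))$ forces $\theta_j(\tau|_i)\in\ccc^j$ for all maximal $\tau$ and all $i$, i.e.\ $\theta_j(\ttt^D_{\zeta_j})\subset\ccc^j$.

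I do not expect a genuine obstacle. The one point requiring care is the verification, in the step for (iii), that $\bigcup_{j=2}^n\eee^j(\sigma)$ really covers the maximal extensions of $\sigma$: this is exactly where the hypothesis that $\bigcup_j\ccc^j(\tau)$ covers the maximal extensions of \emph{every} node $\tau$ (not merely of $\varnothing$) is used. The rest is routine bookkeeping — checking that each composite $(\theta\circ\theta',e\circ e')$ and $(\theta_2\circ\psi_j,e_2\circ d_j)$ is still extremely nice and that the index data match up — and, as emphasized above, no new ordinal combinatorics arises, since the only substantive transfinite step, the limit‑ordinal case, is already carried out in the $n=2$ base of (iii) (appearing in \cite{C} and in the proof of Lemma \ref{useful lemma}(iii)).
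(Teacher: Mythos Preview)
Your proposal is correct and follows essentially the same approach the paper sketches: reduce the $n$-colour versions of (i) and (iii) to the two-colour cases already proved (as Lemma~\ref{useful lemma}(ii) and (iii), whose arguments use $\mmm$ only as a directed set), then deduce (ii) from (i) via downward closedness and (iv) from (iii) by the obvious per-node colouring. The details you supply---pulling back colours along $(\theta,e)$ for the inductive step in (i), and defining the auxiliary per-node colouring $\eee^j$ on $\ttt^D_{\eta_2}$ for the step in (iii)---are exactly the bookkeeping the paper elides when it says the general case follows by ``iterating the result for two colors.''
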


We first note that the result for any number of colors follow by iterating the result for two colors.  We note that $(ii)$ is an easy consequence of $(i)$ and $(iv)$ is an easy consequence of $(iii)$.  We proved $(i)$ in the case that $\ttt^D_\xi=\aaa_\xi$ in Lemma \ref{useful lemma}$(ii)$.  The general case is essentially the same.  Similarly, Lemma \ref{useful lemma}$(iii)$ is a special case of $(iii)$ of Proposition \ref{sum summary}.

\section{Product estimate applications}

\subsection{Relation to the Bourgain $\ell_1$-index}

In \cite{Bo}, Bourgain defined the Bourgain $\ell_1$ index of a Banach space.  This index measures the local complexity of $\ell_1$ structure within a given Banach space in terms of the orders of trees the branches of which are equivalent to the $\ell_1$ basis with a uniform constant of equivalence.  A given Banach space contains an isomorphic copy of $\ell_1$ if if and only if one of these trees is ill-founded.  The following definition of the Bourgain $\ell_1$-index of an operator was defined in \cite{BCF}.  For an operator $A:X\to Y$ and $\ee>0$, we let $T(A,X,Y,\ee)$ consist of all $(x_i)_{i=1}^n\in B_X^{<\nn}$ so that for all scalars $(a_i)_{i=1}^n$, $\|\sum_{i=1}^n a_iAx_i\|\geqslant \ee\sum_{i=1}^n |a_i|$.  By convention, we include the empty sequence in $T(A,X,Y,\ee)$.  We let $I(A)=\sup_{\ee>0}o(T(A,X,Y,\ee))$.  This index measures the complexity of local $\ell_1$ structures in $X$ which are preserved by $A$.  Then $I(A)<\infty$ if and only if $A$ does not preserve an isomorph of $\ell_1$. This index generalizes the $\ell_1$ index of a Banach space, since the $\ell_1$ index of a Banach space coincides with the index of the identity operator of that Banach space.  We remark here that if $(x_i)_{i=1}^n\in T(A,X,Y, \ee)^\xi$ and $y_j=\sum_{i=p_{j-1}+1}^{p_j} a_ix_i$ for some $0=p_0<\ldots <p_m=n$ and scalars $(a_i)_{i=1}^n$ so that $\sum_{i=p_{j-1}+1}^{p_j}|a_i|=1$ for each $1\leqslant j\leqslant m$, then $(y_j)_{j=1}^m\in T(A,X,Y,\ee)^\xi$.  This can be easily shown by induction on $\xi$.  

The main result of this subsection is the following.  We draw the reader's attention to \cite{AJO}, where a similar result was shown for the Szlenk and Bourgain $\ell_1$ indices of a Banach space, not an operator, assuming the space is separable and has a sequentially ordered basis.  

\begin{theorem} Suppose $A:X\to Y$ is an operator and $Y$ has an unconditional basis $(e_i)_{i\in I}$.  Then $Sz(A)\leqslant I(A)\leqslant \omega Sz(A).$  In particular, if $Sz(A)\geqslant \omega^\omega$, $Sz(A)=I(A)$.  

\label{Bourgain}
\end{theorem}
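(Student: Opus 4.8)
The plan is to prove the two inequalities $Sz(A)\leq I(A)$ and $I(A)\leq\omega\,Sz(A)$ separately and then deduce the last assertion. First some reductions. We may assume $Sz(A)<\infty$, since otherwise $I(A)\leq\omega\,Sz(A)=\infty$ trivially, and the first inequality (once proved) forces $I(A)=\infty$ too; in the finite case Proposition~\ref{special form}(iii), applied to the convex $w^*$ compact set $A^*B_{Y^*}$, gives $Sz(A)=\omega^\xi$ for some $\xi$, and Theorem~\ref{main theorem} identifies $Sz(A)$ with $\sup_{\ee>0}o_w(\hhh^{A^*B_{Y^*}}_\ee)$. Since $Sz(A)$ and $I(A)$ are each defined as a supremum over $\ee>0$, they are unaffected by renorming $Y$, so we may assume $(e_i)_{i\in I}$ is $1$-unconditional. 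Finally, since $A^*B_{Y^*}$ is symmetric and convex, Remark~\ref{useful remark} and the Hahn--Banach discussion preceding it show $(x_i)_{i=1}^n\in\hhh^{A^*B_{Y^*}}_\ee$ exactly when every convex combination of $(Ax_i)_{i=1}^n$ has norm at least $\ee$; thus $\hhh^{A^*B_{Y^*}}_\ee$ is precisely the ``positive'' Bourgain $\ell_1$ tree, and in particular $T(A,X,Y,\ee)\subset\hhh^{A^*B_{Y^*}}_\ee$.

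For $Sz(A)\leq I(A)$ I would show that for each $\ee>0$ and each $\xi<o_w(\hhh^{A^*B_{Y^*}}_\ee)$ one has $o(T(A,X,Y,\ee/2))>\xi$; taking suprema then finishes. To this end, re-run the construction proving $(i)\Rightarrow(ii)$ of Lemma~\ref{lemma2} with $D=\mmm$, producing $(x_\alpha)_{\alpha\in\ttt_\xi^\mmm}$ whose branches lie in $\hhh^{A^*B_{Y^*}}_\ee$, but exploiting the extra freedom at each choice of a successor vector: whenever a point of a prescribed $U\in\mmm$ must be chosen, choose it in $U$ intersected with a further weak neighbourhood of the form $\{x:|(A^*e_i^*)(x)|<\delta,\ i\in F\}$ for any finite $F$ fixed in advance. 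Recording together with each $x_\alpha$ a finite set $F_\alpha$ disjoint from the (finitely many) coordinate sets recorded at the ancestors of $\alpha$ and with $\|Ax_\alpha-P_{F_\alpha}Ax_\alpha\|<\ee/4$ (possible since $Ax_\alpha$ was chosen small on those ancestor coordinates and has countable, unconditionally convergent support), we arrange that along each branch the vectors $P_{F_{\alpha|_1}}Ax_{\alpha|_1},\dots,P_{F_{\alpha|_n}}Ax_{\alpha|_n}$ are successive, disjointly supported blocks forming a small perturbation of $Ax_{\alpha|_1},\dots,Ax_{\alpha|_n}$. The crucial elementary point is then: for disjointly supported $u_1,\dots,u_n$ with respect to a $1$-unconditional basis, if every convex combination of $(u_j)$ has norm $\geq\delta$, then $\|\sum_j a_ju_j\|\geq\delta\sum_j|a_j|$ for all scalars $a_j$, because $1$-unconditionality gives $\|\sum_j a_ju_j\|=\|\sum_j|a_j|\widetilde u_j\|$ where $\widetilde u_j$ has the absolute values of the coordinates of $u_j$, and the convex-combination hypothesis transfers verbatim to the $\widetilde u_j$. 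Applied to the blocks, this makes every branch of $(x_\alpha)_{\alpha\in\ttt_\xi^\mmm}$ an element of $T(A,X,Y,\ee/2)$, and restricting to a single neighbourhood at each level we get a $\ttt_\xi$-tree in $T(A,X,Y,\ee/2)$, whence $o(T(A,X,Y,\ee/2))>\xi$ by Lemma~\ref{lemma2} applied with $D=\{X\}$ (for which $o_D$ is the ordinary order).

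For $I(A)\leq\omega\,Sz(A)$ it is enough to prove, for each fixed $\ee$, that $o(T(A,X,Y,\ee))\leq\omega\cdot o_w(\hhh^{A^*B_{Y^*}}_\ee)$, which follows (take $\gamma=o_w(\hhh^{A^*B_{Y^*}}_\ee)$) from
\[
o(T(A,X,Y,\ee))>\omega\cdot\gamma\ \Longrightarrow\ o_w(\hhh^{A^*B_{Y^*}}_\ee)>\gamma ,
\]
proved by transfinite induction on $\gamma$ (the limit step is immediate from $\omega\cdot\gamma=\sup_{\gamma'<\gamma}\omega\cdot\gamma'$ and the definition of the weak derivative; the base step is trivial). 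For the successor step, from $o(T(A,X,Y,\ee))>\omega\cdot(\gamma+1)$ one builds a witness $(z_\alpha)_{\alpha\in\ttt_\gamma^\mmm}$ for $o_w(\hhh^{A^*B_{Y^*}}_\ee)>\gamma$ by \emph{sign-balanced averaging}: the ``bottom'' $\omega$ worth of order of $T(A,X,Y,\ee)$ is spent producing, for each weak neighbourhood $V$ of $0$ in $X$, a vector $z_V=\frac1N\sum_{i=1}^N\theta_i x_i\in V$, where $(x_1,\dots,x_N)$ is a long $\ell_1^\ee$-segment lying in the derived tree $T(A,X,Y,\ee)^{\omega\cdot\gamma}$ and the signs $\theta_i\in\{\pm1\}$ are chosen by the first-moment estimate $\mathbb{E}\,|f(\frac1N\sum_i\theta_ix_i)|\leq\|f\|/\sqrt N$ over the finitely many functionals defining $V$; since $(Ax_i)_{i=1}^N$ is $\ell_1^\ee$ one still has $\|Az_V\|\geq\ee$, and $z_V$, being an $\ell_1$-normalized block of an $\ell_1^\ee$-segment, keeps the whole branch an $\ell_1^\ee$-blocking of a branch of $T(A,X,Y,\ee)$, hence inside $\hhh^{A^*B_{Y^*}}_\ee$. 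Below each $z_V$ there remains $\omega\cdot\gamma$ worth of order of $T(A,X,Y,\ee)$ --- here one invokes the block-replacement invariance of the derived trees $T(A,X,Y,\ee)^\zeta$ recorded before Theorem~\ref{Bourgain}, which lets the average $z_V$ stand in for the segment it replaces --- so the inductive hypothesis furnishes the $\gamma$-deep weakly null continuation, completing the step. (This direction uses nothing about the basis of $Y$; the unconditional basis enters only through the core point of the previous paragraph.)

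Finally, the last assertion: if $Sz(A)\geq\omega^\omega$ then, since $Sz(A)$ is $\infty$ or a gamma number $\omega^\xi$ by Proposition~\ref{special form}(iii), either $Sz(A)=\infty$ --- in which case $I(A)=\infty$ by the first inequality --- or $Sz(A)=\omega^\xi$ with $\xi\geq\omega$, so $\omega\,Sz(A)=\omega^{1+\xi}=\omega^\xi=Sz(A)$ and the two inequalities force $I(A)=Sz(A)$. I expect the only genuinely delicate point to be the successor step of the last induction: the averaging that collapses $\omega$ levels of the Bourgain $\ell_1$ tree into one weak-derivative level destroys the node structure of $T(A,X,Y,\ee)$, so the inductive hypothesis must be phrased in a node-localised form carrying enough slack, and the block-replacement stability of $T(A,X,Y,\ee)^\zeta$ must be used to re-anchor the recursion below every average $z_V$. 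This bookkeeping runs parallel to the corresponding argument in \cite{C2} (and \cite{AJO}), and I would carry it out by first isolating the correct slack-bearing statement and only then performing the sign-balanced averaging.
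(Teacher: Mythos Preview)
Your proposal is correct, and for $Sz(A)\leq I(A)$ it is essentially the paper's argument (the paper also passes to disjointly supported blocks via finite truncation sets and uses $1$-unconditionality to upgrade the positive-coefficient lower bound to a signed one; it ends with the constant $\ee/5$ rather than $\ee/2$, which is immaterial).

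For $I(A)\leq\omega\,Sz(A)$, however, the paper's route is both shorter and avoids the very difficulty you flag. Two differences are worth noting. First, instead of proving the order implication $o(T(A,X,Y,\ee))>\omega\gamma\Rightarrow o_w(\hhh^{A^*B_{Y^*}}_\ee)>\gamma$ by building a witness $(z_\alpha)_{\alpha\in\aaa_\gamma}$, the paper proves the \emph{containment} $T(A,X,Y,\ee)^{\omega\xi}\subset(\hhh^{A^*B_{Y^*}}_\ee)^\xi_w$ by induction on $\xi$; this is a statement about individual sequences, so the successor step only asks: given $t\in (T^{\omega\xi})^\omega$ and $U\in\mmm$, find $x\in U$ with $t\cat x\in T^{\omega\xi}$, and then invoke the inductive hypothesis. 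No global witness tree needs to be assembled, so the ``re-anchoring below every average $z_V$'' bookkeeping you anticipate simply does not arise. Second, in place of your sign-balanced averaging with the Khintchine-type estimate, the paper uses a one-line dimension argument: if $U$ is defined by a finite set $F\subset X^*$, take $n>|F|$, pick $(x_i)_{i=1}^n$ with $t\cat(x_i)_{i=1}^n\in T^{\omega\xi}$, and choose scalars $(a_i)$ with $\sum|a_i|=1$ and $x^*(\sum a_ix_i)=0$ for every $x^*\in F$ (possible since the map $\rr^n\to\rr^{|F|}$ has nontrivial kernel). Then $x=\sum a_ix_i\in U$, and block-replacement stability gives $t\cat x\in T^{\omega\xi}$. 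Your random-sign device would work, but this linear-algebra trick is cleaner and makes the step genuinely routine.

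Your deduction of the final assertion is the same as the paper's.
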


\begin{proof} By renorming $Y$, we may assume the basis $(e_i)_{i\in I}$ is $1$-unconditional, noting that this does not change $Sz(A)$ or $I(A)$.  This is because by Theorem \ref{operator ideal} the Szlenk index is unchanged by composing $A$ with an isomorphism on $Y$, and the same is true of $I(A)$ by results from \cite{BCF}.  

We first prove that $I(A)\leqslant \omega Sz(A)$.  To do this, we will prove that $$T(A, X, Y, \ee)^{\omega\xi}\subset (\hhh^{A^*B_{Y^*}}_\ee)^\xi_w$$ for each $\xi\in \ord$.  As we remarked in the previous subsection, a non-empty sequence $(x_i)_{i=1}^n$ lies in $\hhh^{A^*B_{Y^*}}$ if all convex combinations of $(Ax_i)_{i=1}^n$ have norm at least $\ee$.  This easily implies that $T(A,X,Y,\ee)\subset \hhh^{A^*B_{Y^*}}_\ee$, which is the $\xi=0$ case.  The limit ordinal case is trivial.  Assume $T(A,X,Y, \ee)^{\omega\xi}\subset (\hhh^{A^*B_{Y^*}}_\ee)^\xi_w$.  If $T(A,X,Y, \ee)^{\omega(\xi+1)}=\varnothing\subset (\hhh^{A^*B_{Y^*}}_\ee)^{\xi+1}_w$, we are done.  So assume $t\in T(A,X,Y,\ee)^{\omega(\xi+1)}=(T(A,X,Y,\ee)^{\omega\xi})^\omega$.  This simply means that for any $n\in \nn$, there exists $(x_i)_{i=1}^n\in B_X^{<\nn}$ so that $t\cat (x_i)_{i=1}^n\in T(A,X,Y,\ee)^{\omega\xi}$.  Fix $U\in \mmm$ and write $U=\{x: |x^*(x)|<\delta \text{\ }\forall x^*\in F\}$, where $F$ is finite.  Fix $n>|F|$, and $(x_i)_{i=1}^n$ so that $t\cat (x_i)_{i=1}^n\in T(A,X,Y,\ee)^{\omega\xi}$.  By a dimension argument, we may choose $x=\sum_{i=1}^n a_ix_i$ where $\sum_{i=1}^n |a_i|=1$ and so that $x^*(x)=0$ for each $x^*\in F$.  Thus $x\in U\cap B_X$.  By our remark in the paragraph preceding the statement of the theorem, since $t\cat (x_i)_{i=1}^n\in T(A,X,Y, \ee)^{\omega\xi}$, $t\cat x\in T(A,X,Y,\ee)^{\omega\xi}$ and, by the inductive hypothesis, $t\cat x\in (\hhh^{A^*B_{Y^*}}_\ee)_w^\xi$.  Since $U$ was arbitrary, this guarantees that $t\in (\hhh^{A^*B_{Y^*}}_\ee)_w^{\xi+1}$.  This completes the claim and shows that $I(A)\leqslant \omega Sz(A)$.

Next, assume $o_w(\hhh^{A^*B_{Y^*}}_\ee)>\xi$ for some $\xi\in \ord$ and $\ee>0$.  By Lemma \ref{lemma2}, we may choose $(x_\alpha)_{\alpha\in \aaa_\xi}$ so that for $(t, (U_1, \ldots, U_n))\in \aaa_\xi$, $x_\alpha\in U_n$, and for each $\alpha\in \aaa_\xi$, $(x_{\alpha|_i})_{i=1}^{|\alpha|}\in \hhh^{A^*B_{Y^*}}_\ee$.   For $J\subset I$, let $P_J:Y\to Y$ be the projection $P_J\sum_{i\in I}a_ie_i=\sum_{i\in J}a_ie_i$.  Define a monotone $\theta:\ttt_\xi\to \aaa_\xi$ so that for each $t\in \ttt_\xi$, there exists $\sigma\in \mmm^{<\nn}$ so that $\theta(t)=(t, \sigma)$ and a finite set $I_t\subset I$ so that \begin{enumerate}[(i)]\item $\|Ax_{\theta(t)} - P_{I_t}Ax_{\theta(t)}\|<\ee/5$, \item for each $t\in \ttt_\xi$, $\|P_{\cup_{i=1}^{|t|-1} I_{t|_i}}Ax_{\theta(t)}\|<\ee/5$, \item $I_s\subset I_t$ for each $s\in \ttt_\xi$ with $s\prec t$.   \end{enumerate} More precisely, for $t\in \ttt_\xi$ with $|t|=1$, let $\theta(t)=(t, U)$ for some $U\in \mmm$ and choose $I_t\subset I$ finite so that $\|Ax_{\theta(t)} - P_{I_t}Ax_{\theta(t)}\|<\ee/5$.  Next, if $\theta(s)$ and $I_s$ have been defined for each $s\in \ttt_\xi$ with $|s|<n$, and if $t\in \ttt_\xi$ with $|t|=n$, let $\sigma\in \mmm^{<\nn}$ be such that $\theta(p(t))=(p(t), \sigma)$.   Since $(x_{(t, \sigma\cat U)})_{U\in \mmm}$ is a weakly null net, and since $P_{\cup_{i=1}^{n-1} I_{t|_i}}A$ is compact, there exists $U\in \mmm$ so that $\|P_{\cup_{i=1}^{n-1}I_{t|_i}}Ax_{(t, \sigma\cat U)}\|<\ee/5$.  Define $\theta(t)=(t, \sigma\cat U)$.  Choose $I_t$ so that $\|Ax_{\theta(t)}- P_{I_t}Ax_{\theta(t)}\|<\ee/5$.  This completes the recursive definition of $\theta(t)$ and $I_t$.  

For $t\in \ttt_\xi$, let $y_t=P_{I_t\setminus \cup_{i=1}^{|t|-1}I_{t|_i}} Ax_{\theta(t)}$.  Note that for each $t\in \ttt_\xi$, $(y_{t|_i})_{i=1}^{|t|}$ is disjointly supported in $Y$ and $\|y_t-Ax_{\theta(t)}\|\leqslant \|P_{I\setminus I_t} x_{\theta(t)}\| + \|P_{\cup_{j=1}^{|t|-1} I_{t|_j}} x_{\theta(t)}\|<2\ee/5$.  Fix $t\in \ttt_\xi$ and positive scalars $(a_i)_{i=1}^{|t|}$.  Then $$\|\sum_{i=1}^{|t|} a_iy_{t|_i}\| \geqslant \|\sum_{i=1}^{|t|} a_iAx_{\theta(t|_i)}\| - \sum_{i=1}^{|t|} a_i2\ee/5 \geqslant 3\ee/5\sum_{i=1}^{|t|}a_i.$$  Here we have used the fact that $(x_{\theta(t|_i)})_{i=1}^{|t|}\in \hhh^{A^*B_{Y^*}}_\ee$, so that any convex combination of $(Ax_{\theta(t|_i)})_{i=1}^{|t|}$ has norm at least $\ee$, and by homogeneity $\|\sum_{i=1}^{|t|} a_i Ax_{\theta(t|_i)}\|\geqslant \ee \sum_{i=1}^{|t|}a_i$ for any positive scalars $(a_i)_{i=1}^{|t|}$.   But since $(y_{t|_i})_{i=1}^{|t|}$ is a disjointly supported sequence in a $1$-unconditional basis, $$\|\sum_{i=1}^{|t|}a_i y_{t|_i}\|\geqslant 3\ee/5\sum_{i=1}^{|t|}|a_i|$$ for any scalars $(a_i)_{i=1}^{|t|}$.  But then $$\|\sum_{i=1}^{|t|} a_iAx_{\theta(t|_i)} \|\geqslant \|\sum_{i=1}^{|t|} a_iy_{t|_i}\|-2\ee/5\sum_{i=1}^{|t|}|a_i| \geqslant \ee/5\sum_{i=1}^{|t|}|a_i|$$ for any scalars $(a_i)_{i=1}^{|t|}$.  Then by \cite{C}, $(x_{\theta(t)})_{t\in \ttt_\xi}$ witnesses the fact that $I(A)>\xi$, which shows $Sz(A)\leqslant I(A)$.  

We turn now to the second statement.  We have shown that if $I(A)=\infty$ if and only if $Sz(A)=\infty$. If $Sz(A)\geqslant \omega^\omega$, then $A$ cannot be compact.     Therefore we must only deal with the case that $I(A), Sz(A)<\infty$ and $A$ is not compact.  But it is known in this case \cite{BCF}   that there exists $\eta\in \ord$ so that $I(A)=\omega^\eta$. By Proposition \ref{special form}, there exists $\xi\in \ord$ so that $Sz(A)=\omega^\xi$.   The inequalities above guarantee that $\xi\leqslant \eta\leqslant 1+\xi$ and, if $\xi\geqslant \omega$, $1+\xi=\xi=\eta$.

\end{proof}

\subsection{Infinite direct sums} Suppose that $(e_i)_{i\in I}$ is a $1$-unconditional  basis for $E$.  Assume also that for each $i\in I$, $X_i$ is a Banach space, and let $X=(\oplus_{i\in I}X_i)_E$. Let $\pi:X\to E$ be the function taking $(x_i)_{i\in I}$ to $\sum_{i\in I}\|x_i\|e_i$.  Let $\pi_*:X^*\to E^*$ be defined by $\pi_*(x_i^*)_{i\in I}= \sum_{i\in I}\|x_i^*\|e_i^*$. Recall that $\pi_*$ is well-defined, although $\pi_*(x_i^*)_{i\in I}$ is only guaranteed to be a formal series, and not necessarily countably non-zero or norm convergent. We note that $\|x\|=\|\pi(x)\|$ for all $x\in X$ and $\|x^*\|= \|\pi_*(x^*)\|$ for all $x^*\in X^*$.   For $J\subset I$, we let $P_J$ denote both the projection $P_J:E\to E$ defined by $P_J\sum_{i\in I}a_ie_i=\sum_{i\in J}a_ie_i$, as well as the projection $P_J:X\to X$ defined by $P_J(x_i)_{i\in I}=(1_J(i)x_i)_{i\in I}$.   For each $i\in I$, let $L_i$ be a symmetric, non-empty, convex, $w^*$ compact subset of $X_i^*$.  Assume also that $L\subset [e_i^*:i\in I]\subset E^*$ is $w^*$ compact, unconditional, convex, and non-empty.  By unconditional, we mean that $\sum_{i\in I}a_i e_i^*\in L$ if and only if $\sum_{i\in I}\ee_i a_i e_i^*\in L$ for all $(\ee_i)_{i\in I}\in \{\pm 1\}^I$. We let $K=\{x^*\in X^*\cap \prod_{i\in I}L_i: \pi_* (x^*)\in L\}$.  It is easy to see that this set is $w^*$ compact, convex, symmetric, and non-empty.  

The main result of this subsection is the following.  We draw the reader's attention to \cite{B2}, where a similar result was shown in the case that $E=F=\ell_p(I)$ for $1\leqslant p\leqslant \infty$ or $E=F=c_0(I)$, where $(e_i)_{i\in I}=(f_i)_{i\in I}=(1_{\{i\}})_{i\in I}$.

\begin{theorem} With $X$, $L$, $L_i$, and $K$ as above, there exists a constant $C>1$ so that $$Sz_\ee(K)\leqslant (\sup_{i\in I} Sz(L_i)) Sz_{\ee/C}(L).$$  Consequently, $Sz(K)\leqslant (\sup_{i\in I}Sz(L_i))Sz(L)$.  

\label{infinite direct sum}

\end{theorem}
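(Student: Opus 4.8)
The plan is to combine Theorem~\ref{main theorem} with a transfinite induction that, at stage $\gamma$, records that $\alpha\cdot\gamma$ weak derivations of $\hhh^K_\ee$ force the ``$E$‑profile'' of the witnessing functionals into the $\gamma$‑th Szlenk derived set of $L$; the factor $\alpha:=\sup_{i\in I}Sz(L_i)$ enters because advancing the $E$‑profile by one Szlenk step of $L$ costs (up to an absolute constant) $\alpha$ weak derivations of $K$. First I would reduce to an $o_w$‑statement. If $\alpha=\infty$ or $Sz(L)=\infty$ the right‑hand side is $\infty$ and there is nothing to prove, so assume both are ordinals. By Theorem~\ref{main theorem}, together with the quantitative form recorded in its proof (namely $Sz_\ee(K)\leqslant o_w(\hhh^K_{\ee/5})$, obtained from Lemma~\ref{lemma coup de grace}$(i)$ with $s=\varnothing$, $K=\{0\}$), it suffices to produce a constant $C$ with $o_w(\hhh^K_\ee)\leqslant\alpha\cdot Sz_{\ee/C}(L)$ for every $\ee>0$: the displayed estimate then holds with $C$ replaced by $5C$, and taking the supremum over $\ee$ and invoking Proposition~\ref{special form}$(iii)$ — so that $Sz(L)$ is $0$, $1$, a gamma number, or $\infty$, with the supremum $\sup_\ee Sz_{\ee/C}(L)=Sz(L)$ attained exactly when $L$ is norm compact — together with continuity of $\beta\mapsto\alpha\cdot\beta$ at limit ordinals yields $Sz(K)\leqslant\alpha\cdot Sz(L)$. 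The one computation I would isolate at the outset uses $1$‑unconditionality: if $x^*=(x^*_i)_i\in K$ and $x=(x_i)_i\in B_X$ then, with $\pi(x)=\sum_i\|x_i\|e_i\in B_E$ and $\pi_*(x^*)=\sum_i\|x^*_i\|e_i^*\in L$, one has $\langle\pi_*(x^*),\pi(x)\rangle=\sum_i\|x^*_i\|\,\|x_i\|\geqslant\bigl|\sum_i x^*_i(x_i)\bigr|=|x^*(x)|$; hence every $t\in\hhh^K_\ee$ yields $(\pi(x))_{x\in t}\in\hhh^L_\ee$, witnessed by the single functional $\pi_*(x^*)$. Thus profiles of trees in $\hhh^K_\ee$ are trees in $\hhh^L_\ee$ \emph{with no loss in $\ee$}; the sole defect is that $\pi$ does not carry weakly null nets of $X$ to weakly null nets of $E$, and repairing this defect is exactly where $\alpha$ will be spent.

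Fixing $\ee':=\ee/C$, I would set, for each $\gamma\in\ord$,
$$K_\gamma:=\Bigl\{x^*\in X^*\cap\textstyle\prod_{i\in I}L_i:\ \exists\,\ell^*\in s^\gamma_{\ee'}(L)\ \text{with}\ \|x^*_i\|\leqslant\langle\ell^*,e_i\rangle\ \text{for all}\ i\in I\Bigr\},$$
so that $K_0=K$, the $K_\gamma$ decrease, and $K_\gamma=\varnothing$ once $\gamma\geqslant Sz_{\ee'}(L)$. The domination formulation (rather than the naive $\pi_*(x^*)\in s^\gamma_{\ee'}(L)$) is chosen so that each $K_\gamma$ is $w^*$ compact: a $w^*$‑cluster point of a net in $K_\gamma$ lies coordinatewise in $\prod_iL_i$, and a simultaneous $w^*$‑cluster point of the associated dominating functionals lies in the $w^*$ compact set $s^\gamma_{\ee'}(L)$ and still dominates, by $w^*$‑lower semicontinuity of each coordinate norm; the same argument gives $\bigcap_{\zeta<\gamma}K_\zeta=K_\gamma$ for limit $\gamma$. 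The heart of the proof is the claim, proved by induction on $\gamma$, that $(\hhh^K_\ee)^{\alpha\cdot\gamma}_w\subseteq\hhh^{K_\gamma}_{\ee'}$. The case $\gamma=0$ holds since $\ee\geqslant\ee'$; the limit case follows from the elementary fact that $\bigcap_\gamma\hhh^{M_\gamma}_{\ee'}=\hhh^{\bigcap_\gamma M_\gamma}_{\ee'}$ for a decreasing net of $w^*$ compact sets $M_\gamma$ (pass to a $w^*$‑cluster point of witnessing functionals, using that $\{x^*:x^*(x)\geqslant\ee'\}$ is $w^*$ closed). Granting the claim, evaluating at $\gamma=Sz_{\ee'}(L)$ gives $(\hhh^K_\ee)^{\alpha\cdot Sz_{\ee'}(L)}_w\subseteq\hhh^{K_{Sz_{\ee'}(L)}}_{\ee'}=\varnothing$ (as $K_{Sz_{\ee'}(L)}=\varnothing$), i.e.\ $o_w(\hhh^K_\ee)\leqslant\alpha\cdot Sz_{\ee/C}(L)$, as required.

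The successor step $(\hhh^{K_\gamma}_{\ee'})^{\alpha}_w\subseteq\hhh^{K_{\gamma+1}}_{\ee'}$ is the main obstacle (the successor case of the induction is then immediate: $(\hhh^K_\ee)^{\alpha\cdot(\gamma+1)}_w=\bigl((\hhh^K_\ee)^{\alpha\cdot\gamma}_w\bigr)^\alpha_w\subseteq(\hhh^{K_\gamma}_{\ee'})^\alpha_w$). Given $s$ in the left‑hand side, write $\hhh^{K_\gamma}_{\ee'}(s)=\hhh^{K^s_\gamma}_{\ee'}$, where $K^s_\gamma:=K_\gamma\cap\{x^*:x^*(z)\geqslant\ee'\ \forall z\in s\}$ is $w^*$ compact; then Lemma~\ref{lemma coup de grace}$(ii)$ gives, for $0<\delta<\ee'$, that $s^{\alpha}_\delta(K^s_\gamma)\neq\varnothing$, and one picks $x^*$ in it. Since $x^*(z)\geqslant\ee'$ for $z\in s$, all that remains is to show $x^*\in K_{\gamma+1}$, i.e.\ that $\pi_*(x^*)$ is dominated coordinatewise by some element of $s^{\gamma+1}_{\ee'}(L)=s_{\ee'}\bigl(s^\gamma_{\ee'}(L)\bigr)$. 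This is where $\alpha=\sup_iSz(L_i)$ must do its work: an element cannot survive $\alpha$ Szlenk derivations of $K^s_\gamma$ ``within a single $\pi_*$‑fibre'', because the fibre over a fixed norm‑profile $(c_i)_i$ is contained in $\prod_i\{v^*\in L_i:\|v^*\|\leqslant c_i\}$, whose $w^*$‑to‑norm fragmentation is governed, coordinate by coordinate, by the indices $Sz(L_i)\leqslant\alpha$; consequently the slicing must, within $\alpha$ steps, genuinely move the $E$‑profile, producing in every $w^*$‑neighbourhood in $E^*$ of an appropriate dominating functional a pair of elements of $s^\gamma_{\ee'}(L)$ at norm distance exceeding $\ee'$ (after adjusting $C$).

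Turning the dichotomy ``either separate the $E$‑profile and halt, or refine inside one coordinate $X^*_i$ — an operation bounded by a weakly null tree in $\hhh^{L_i}_{\cdot}$, hence of order $<\alpha$'' into an honest recursion over the $\alpha$ many derivations is the technical crux, and is where essentially all of the care goes; in particular it is there that one reconciles the norm structure, which $\pi$ and $\pi_*$ respect exactly, with the $w^*$ structure, which they do not. Modulo this step the theorem follows from the bookkeeping above, and I would expect the resulting constant $C$ to be absolute once the basis is normalised to be $1$‑unconditional.
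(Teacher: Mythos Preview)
Your framework is clean and the reductions at the outset are fine, but the successor step $(\hhh^{K_\gamma}_{\ee'})^{\alpha}_w\subseteq\hhh^{K_{\gamma+1}}_{\ee'}$ is not proved; you explicitly describe it as ``the technical crux'' and offer only a heuristic dichotomy. The difficulty is real: you want to deduce from $x^*\in s^\alpha_\delta(K^s_\gamma)$ that $\pi_*(x^*)$ is dominated by some element of $s_{\ee'}(s^\gamma_{\ee'}(L))$, but Szlenk derivations of $K^s_\gamma$ do not decompose into ``profile-moving'' and ``within-coordinate'' parts in any evident way. The map $\pi_*$ is nonlinear, so a $w^*$-nearby pair in $K^s_\gamma$ at large norm distance need not yield a $w^*$-nearby pair in $s^\gamma_{\ee'}(L)$ at large norm distance, nor does it obviously localise to a single coordinate. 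Your fibre heuristic (``the fibre over a fixed profile is contained in $\prod_i\{v^*\in L_i:\|v^*\|\leqslant c_i\}$'') does not control the situation, because the profile is not fixed along a Szlenk derivation --- it can drift by small amounts at every stage without ever producing an $\ee'$-separation in $E^*$. Making the dichotomy precise would require exactly the kind of tree-by-tree coordinate control that you have not supplied, and it is not clear the dual-side approach can be salvaged without essentially reinventing the primal argument.

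The paper proceeds quite differently: it works entirely on the primal side with trees in $B_X$ indexed by $\Gamma_\zeta=\ttt^{[I]^{<\nn}}_\zeta$, where the directed set is finite subsets of $I$. The inductive claim is that $s\in(\hhh^K_\ee)^{\xi\zeta}_w$ yields a tree $(x_\gamma)_{\gamma\in\Gamma_\zeta}$ in $\hhh^K_\ee(s)$ with $|P_{J_n}x_\gamma|_K<2^{-|J_n|}$ whenever $\gamma=(t,(J_1,\ldots,J_n))$. The successor step is accomplished by a convex-combination trick: given an $\mmm$-tree $(u_\alpha)_{\alpha\in\aaa_\xi}$ witnessing $o_w>\xi$, for each finite $J\subset I$ one has $Sz(P_J^*K)\leqslant\xi$ by the Minkowski sum estimate (Theorem~\ref{Minkowski}), so some convex block $x^J$ of some branch satisfies $|P_Jx^J|_K<2^{-|J|}$. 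One then blocks the resulting tree to be nearly coordinatewise disjoint, pushes through $\pi$ to $B_E$, and uses that coordinatewise-null nets are pointwise null on $L\subset[e_i^*:i\in I]$ --- hence eventually in every $U\in(L,\delta)$ --- to conclude $o_{(L,\delta)}(\hhh^L_{\ee_0})>\zeta$. The point is that $\pi$ (unlike $\pi_*$) interacts well with the $(L,\delta)$-derivation once the coordinate supports are controlled, and the $[I]^{<\nn}$-indexing is exactly the device that arranges this control.
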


\begin{proof} The result follows from Theorem \ref{operator ideal} if $|I|<\infty$, so assume $I$ is infinite.  Recall that $[I]^{<\nn}$ denotes the finite subsets of $I$, and let this set be directed by inclusion. Recall also that $|x|_K=\sup_{x^*\in K}|x^*(x)|$, and that for $(x_j)_{j=1}^n\in B_X^{<\nn}$, $(x_j)_{j=1}^n\in \hhh^K_\ee$ if and only if $|x|_K\geqslant \ee$ for all convex combinations $x$ of $(x_j)_{j=1}^n$.  

Note that since $L\subset [e_i^*:i\in I]$, and since $K$ is also unconditional and convex, the set of tuples $(x^*_i)_{i\in I}\in K$ so that $x^*_i=0$ for all but finitely many $i\in I$ is norm dense in $K$.   For this reason, if $(x_J)_{J\in [I]^{<\nn}}\subset B_X$ is such that $|P_J x_J|_K< 2^{-|J|}$ for all $J\in [I]^{<\nn}$, then for any $f\in K$, the net $(f(x_J))_{J\in [I]^{<\nn}}$ converges to zero. 

Choose $R>0$ so that $L\subset R B_{E^*}$, and note that $K\subset R B_{X^*}$.  Let $\xi=\sup_{i\in I}Sz(B_i)$. If $\xi=\infty$, there is nothing to prove, so assume $\xi\in \ord$.  For $\zeta\in \ord$, let $\Gamma_\zeta=\ttt^{[I]^{<\nn}}_\zeta$.   We prove by induction on $\zeta\in \ord$ that if $s\in (\hhh^K_\ee)_w^{\xi\zeta}$, then there exists $(x_\gamma)_{\gamma \in \Gamma_\zeta}$ so that for all $\gamma\in \Gamma_\zeta$, $(x_{\gamma|_i})_{i=1}^{|\gamma|}\in \hhh^K_\ee(s)$ and, if $\gamma=(t, (J_1, \ldots, J_n))\in \Gamma_\zeta$, $|P_{J_n} x_\gamma|_K< 2^{-|J_n|}$.  In particular, for each $t\in \ttt_\zeta$ and $\sigma\in ([I]^{<\nn})^{<\nn}$ with $|t|=|\sigma|+1$, $(x_{(t, \sigma\cat J)})_{J\in [I]^{<\nn}}\subset B_X$ is a net which is pointwise null on $K$ by our remark above.  The only non-trivial case of the induction is the successor case.  Assume the result holds for some $\zeta$ and assume $s\in (\hhh^K_\ee)^{\xi(\zeta+1)}_w$.  Since $\hhh:=(\hhh^K_\ee(s))_w^{\xi\zeta}$ is such that $o_w(\hhh)>\xi$, by Lemma \ref{lemma2} we may select an $\mmm$ tree $(u_\alpha)_{\alpha\in \aaa_\xi}$ in $\hhh$.  Fix $J\in [I]^{<\nn}$.  Note that $P_J^* K$ is contained in the Minkowski sum $R(L_{j_1}+\ldots +L_{j_k})$, where $J=\{j_1, \ldots, j_k\}$. By Theorem \ref{Minkowski}, $Sz(P_J^* K)\leqslant \xi$.  If for every $\alpha\in\aaa_\xi$, every convex combination $x$ of $(u_{\alpha|_i})_{i=1}^{|\alpha|}$ satisfies $|P_J x|_K \geqslant 2^{-|J|}$, then $(P_J x_\alpha)_{\alpha\in \aaa_\xi}\subset B_{\oplus_{i\in J} X_i}$ would give that $Sz(P_J^* K)>\xi$, a contradiction. Here we have used that $|P_Jx|_K = |P_Jx|_{P^*_JK}$.  Therefore there must exist some $\alpha\in \aaa_\xi$ and some convex combination $x^J$ of $(u_{\alpha|_i})_{i=1}^{|\alpha|}$ so that $|P_J x^J|_K< 2^{-|J|}$.  Since $x^J$ is a convex combination of a member of $\hhh$, the length $1$ sequence $(x^J)$ is a member of $\hhh$.  This means that $s\cat x^J \in (\hhh_\ee^K)_w^{\xi\zeta}$.  By the inductive hypothesis, there exists $(x^J_\gamma)_{\gamma\in \Gamma_\zeta}$ satisfying the conclusions with $s$ replaced by $s\cat x^J$.  We then define $(x_\gamma)_{\gamma\in \Gamma_{\zeta+1}}$ by $x_{(\zeta+1, J)}=x^J$ and, if $\zeta>0$, $x_{(\zeta+1,J)\cat (t,\sigma)}= x^J_{(t, \sigma)}$.  This completes the induction.

Next, fix $0<\delta<\ee_0<\ee$ and $0<\mu< (\ee-\ee_0)/2$.  Let $\zeta=o_{(L, \delta)}(\hhh^L_{\ee_0})$ and assume $\zeta\in \ord$, otherwise the result is trivial.  To obtain a contradiction, assume $o_w(\hhh^K_\ee)>\xi\zeta$.  By the induction above, there exists $(x_\gamma)_{\gamma\in \Gamma_\zeta}$ so that for each $\gamma=(t, \sigma\cat J)\in \Gamma_\zeta$, $|P_J x_\gamma|_K< 2^{-|J|}$.  Define $m:\Gamma_\zeta\to [I]^{<\nn}$ and a nice $\theta:\Gamma_\zeta\to \Gamma_\zeta$ by induction on $|\gamma|$ as follows: If $|\gamma|=1$, write $\gamma=(t, J_0)$ and let $\theta(\gamma)=(t, J)$, where $J_0\subset J$ and $|J|> \log_2(\mu^{-1})$.  Choose $m(\gamma)\in [I]^{<\nn}$ so that $\|P_{I\setminus m(\gamma)}x_\gamma\|< \mu/R$.  Since $K\subset RB_{X^*}$, $|P_{I\setminus m(\gamma)} x_\gamma|_K< \mu$.  

Next, assume $m(\gamma)$ and $\theta(\gamma)$ have been defined for each $\gamma\in \Gamma_\zeta$ with $|\gamma|<n$ so that if $\gamma=(t, \sigma)$, $\theta(\gamma)=(t, \sigma_0)$ for some $\sigma_0\in ([I]^{<\nn})^{<\nn}$.   Fix $\gamma\in \Gamma_\zeta$ with $|\gamma|=n$ (if such a $\gamma$ exists, otherwise we are already done with the definitions of $m$ and $\theta$).  Write $\gamma=(t, \sigma \cat J_0)$ and $\theta(p(\gamma))=(p(t), \sigma_0)$.  Choose $J\in [I]^{<\nn}$ so that $J_0 \subset J$, $\cup_{j=1}^{n-1} m_{\gamma|_j} \subset J$, and $|J|>\log_2(\mu^{-1})$.  Let $\theta(\gamma)=(t, \sigma_0\cat J)$.  Choose $m(\gamma)\in [I]^{<\nn}$ so that $m(\gamma|_j)\subset m(\gamma)$ for each $1\leqslant j<|\gamma|$ and so that $\|P_{I\setminus m(\gamma)} x_{\theta(\gamma)}\|<\mu/R$.  Note that $|P_{I\setminus m(\gamma)} x_{\theta(\gamma)}|_K< \mu$. This completes the recursive construction of $m$ and $\theta$.  Note that for any $\gamma\in \Gamma_\zeta$, if $\theta(\gamma)=(t, (J_1, \ldots, J_n))$, then $J_n\supset \cup_{j=1}^{n-1} m(\gamma|_j)$.   Then $|P_{\cup_{j=1}^{n-1}m(\gamma|_j)} x_{\theta(\gamma)}|_K \leqslant |P_{J_n} x_{\theta(\gamma)}|_K < 2^{-|J_n|}< \mu$.  Here we have used that $\cup_{j=1}^{n-1} m(\gamma|_j)\subset J_n$ and $K$ is unconditional, so that for any $J\subset I$ and $x\in X$, $|P_J x|_K\leqslant |x|_K$.  

Let $y_\gamma= P_{m(\gamma)\setminus \cup_{j=1}^{|\gamma|-1} m(\gamma|_j)} x_{\theta(\gamma)}$ and $z_\gamma= \pi(y_\gamma)$.  Note that $\|y_\gamma\|$, $\|z_\gamma\|\leqslant 1$.  We claim that for all $0\leqslant \eta\leqslant \zeta$ and all $\gamma\in (\Gamma_\zeta \cup\{\varnothing\})^\eta$, $(z_{\gamma|_j})_{j=1}^{|\gamma|}\in (\hhh_{\ee_0}^L)_{(L, \delta)}^\eta$.  This will imply that $\zeta<o_{(L, \delta)}(\hhh^L_{\ee_0})=\zeta$, and this contradiction will finish the proof.  We prove the result by induction on $\eta$.

Fix $\gamma\in \Gamma_\zeta\cup \{\varnothing\}$.  If $\gamma=\varnothing$, of course $\varnothing \in \hhh^L_{\ee_0}$.  If $\gamma\neq \varnothing$ and $f\in K$ is such that $f(x_{\theta(\gamma)})\geqslant \ee$, \begin{align*} f(y_\gamma) & \geqslant f(x_{\theta(\gamma)}) - |f(x_{\theta(\gamma)}- y_\gamma)| \\ & \geqslant \ee - |f(P_{\cup_{j=1}^{|\gamma|-1} m(\gamma|_j)} x_{\theta(\gamma)})| - |f(P_{I\setminus m(\gamma)}x_{\theta(\gamma)})| \\ & \geqslant \ee - |P_{\cup_{j=1}^{|\gamma|-1} m(\gamma|_j)} x_{\theta(\gamma)}|_K - |P_{I\setminus m(\gamma)}x_{\theta(\gamma)}|_K> \ee- 2\mu = \ee_0.\end{align*} Therefore if $f\in K$ is such that $f(x_{\theta(\gamma|_j)})\geqslant \ee$ for each $1\leqslant j\leqslant |\gamma|$, $f(y_{\gamma|_j})\geqslant \ee_0$ for each $1\leqslant j\leqslant |\gamma|$.  Next, note that if $x=(x_i)_{i\in I}\in X$ and $x^*= (x_i^*)_{i\in I}\in K$, $$x^*(x)=\sum_{i\in I}x^*_i(x_i) \leqslant \sum_{i\in I}\|x^*_i\|\|x_i\|e_i^*(e_i)= \pi_*(x^*)(\pi(x)).$$  Therefore if $f\in K$ is such that $f(y_{\gamma|_j})\geqslant \ee_0$ for each $1\leqslant j\leqslant |\gamma|$, $\pi_*(f)\in L$ and $$\pi_*(f)(z_{\gamma|_j})= \pi_*(f) (\pi (y_{\gamma|_j})) \geqslant \ee_0.$$  This proves that $(z_{\gamma|_j})_{j=1}^{|\gamma|}\in \hhh^L_{\ee_0}$ for each $\gamma\in \Gamma_\zeta$, and gives the base case of the induction.  The limit ordinal case of the induction is trivial.  Assume the result holds for a given $\eta<\zeta$ and suppose $\gamma\in (\Gamma_\zeta \cup\{\varnothing\})^{\eta+1}$.  If $\gamma\neq \varnothing$, write $\gamma=(t, \sigma)$, let $t_0$ be an immediate successor of $t$ in $\mt_\zeta^\eta$, and let $s=(z_{\gamma|_j})_{j=1}^{|\gamma|}$.  If $\gamma=\varnothing$, let $t_0$ be an immediate successor of $t$ in $\mt_\zeta^\eta$ and let $s=\varnothing$.   Then by the inductive hypothesis, for each $J\in [I]^{<\nn}$, $s\cat z_{(t_0, \sigma\cat J)}\in (\hhh^L_{\ee_0})_{(L, \delta)}^\eta$.  But by construction, $P_J z_{(t_0, \sigma \cat J)}=0$, so the net $(z_{(t_0, \sigma\cat J)})_{J\in [I]^{<\nn}}\subset B_E$ is coordinate-wise null.  Since $L\subset [e_i^*:i\in I]$, coordinate-wise nullity of the net $(z_{(t_0, \sigma\cat J)})_{J\in [I]^{<\nn}}$ implies it is pointwise null on $L$.  Therefore for any $U\in (L, \delta)$, the net $(z_{(t_0, \sigma\cat J)})_{J\in [I]^{<\nn}}$ is eventually in $U$, whence there exists $z\in U$ so that $s\cat z\in (\hhh^L_{\ee_0})_{(L, \delta)}^\eta$.  Since this holds for any $U\in (L, \delta)$, $s\in (\hhh^L_{\ee_0})_{(L, \delta)}^{\eta+1}$, finishing the induction.

\end{proof}

\begin{corollary} Let $(e_i)_{i\in I}$, $(f_i)_{i\in I}$ be $1$-unconditional bases for the Banach spaces $E, F$.  Suppose also that for each $i\in I$, $B_i:X_i\to Y_i$ is an operator so that the function $e_i\mapsto \|B_i\|f_i$ extends to an operator $B:E\to F$.  Then $A:(\oplus_{i\in I}X_i)_E\to (\oplus_{i\in I}Y_i)_F$ defined by $A(x_i)_{i\in I}=(B_ix_i)_{i\in I}$ is an operator satisfying $Sz(A)\leqslant (\sup_{i\in I} Sz(B_i))Sz(A)$.  In particular, if $E=F$ and $(e_i)_{i\in I}=(f_i)_{i\in I}$, and if $\sup_{i\in I} \|B_i\|<\infty$, $Sz(A)\leqslant (\sup_{i\in I}Sz(B_i)) Sz(E)$.

\end{corollary}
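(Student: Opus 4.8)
The plan is to reduce the Corollary to Theorem~\ref{infinite direct sum}. Write $X=(\oplus_{i\in I}X_i)_E$, $Y=(\oplus_{i\in I}Y_i)_F$, $\pi(x)=\sum_i\|x_i\|e_i$, and $\pi(Ax)=\sum_i\|B_ix_i\|f_i$. From $\|B_ix_i\|\leqslant\|B_i\|\|x_i\|$ together with $Be_i=\|B_i\|f_i$ one obtains the coordinatewise domination $\pi(Ax)\leqslant B\pi(x)$ in $F$, so $1$-unconditionality of $(f_i)$ shows $A$ is a bounded operator with $\|A\|\leqslant\|B\|$; and a direct computation gives $A^*(y_i^*)_{i\in I}=(B_i^*y_i^*)_{i\in I}$ for $(y_i^*)_{i\in I}\in Y^*$. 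Since $A^*B_{Y^*}$ is $w^*$ compact and non-empty, the task is to produce symmetric, convex, $w^*$ compact, non-empty sets $L_i\subset X_i^*$ and a $w^*$ compact, convex, unconditional, non-empty $L\subset[e_i^*:i\in I]$ such that, writing $K=\{x^*\in X^*\cap\prod_i L_i:\pi_*(x^*)\in L\}$, one has $A^*B_{Y^*}\subset K$, $Sz(L_i)=Sz(B_i)$ for each $i$, and $Sz(L)=Sz(B)$. Granting this, Theorem~\ref{infinite direct sum} together with the monotonicity $Sz(K_1)\leqslant Sz(K_2)$ for $K_1\subset K_2$ (immediate from $\hhh^{K_1}_\ee\subset\hhh^{K_2}_\ee$ and Theorem~\ref{main theorem}) yields $Sz(A)=Sz(A^*B_{Y^*})\leqslant Sz(K)\leqslant(\sup_i Sz(L_i))Sz(L)=(\sup_i Sz(B_i))Sz(B)$, the asserted bound (the right-hand factor being $Sz(B)$, in parallel with Theorem~\ref{infinite direct sum}).

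To build $L$: here $B$ is the diagonal operator $E\to F$ with $Be_i=\|B_i\|f_i$, so by the dichotomy for diagonal operators between spaces with unconditional bases recalled earlier, either $B$ preserves an isomorphic copy of $\ell_1$, or $B^*F^*\subset[e_i^*:i\in I]$. In the first case $I(B)=\infty$ (a normalized $\ell_1$-basis of a subspace fixed by $B$ is an infinite branch of $T(B,E,F,\ee)$ for suitable $\ee>0$), hence $Sz(B)=\infty$ by Theorem~\ref{Bourgain} and the Corollary is trivial; so assume the second case. Then $L:=B^*B_{F^*}$ is $w^*$ compact, convex, symmetric, non-empty, contained in $[e_i^*:i\in I]$, with $Sz(L)=Sz(B)$ by the definition of the Szlenk index of an operator; and $L$ is unconditional because, for signs $(\ee_i)_{i\in I}$, the coordinate-multiplier on $E^*$ commutes with $B^*$ through the corresponding multiplier on $F^*$, which has norm $\leqslant 1$. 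For each $i$ I take $L_i$ to be a suitable positive scalar multiple of $B_i^*B_{Y_i^*}$ — harmless, since $Sz$ is scale-invariant, so still $Sz(L_i)=Sz(B_i)$ — chosen large enough that every $i$-th coordinate of a point of $A^*B_{Y^*}$ lies in $L_i$ (concretely $\|f_i^*\|^{-1}B_i^*B_{Y_i^*}$ works, as the set of $i$-th coordinates of points of $B_{Y^*}$ is exactly $\|f_i^*\|^{-1}B_{Y_i^*}$).

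It remains to check $A^*B_{Y^*}\subset K$. Fix $y^*=(y_i^*)_{i\in I}\in B_{Y^*}$. Each coordinate $B_i^*y_i^*$ lies in $L_i$ by construction, so $A^*y^*\in\prod_i L_i$. The formal series $g:=\sum_i\|y_i^*\|f_i^*$ is an element of $B_{F^*}$, its $F^*$-norm being $\|y^*\|_{Y^*}\leqslant1$; hence $B^*g=\sum_i\|B_i\|\|y_i^*\|e_i^*\in L$, and $\pi_*(A^*y^*)=\sum_i\|B_i^*y_i^*\|e_i^*$ is the image of $B^*g$ under the multiplier on $E^*$ with symbol $\lambda_i=\|B_i^*y_i^*\|/(\|B_i\|\,\|y_i^*\|)\in[0,1]$ (with the convention $0/0=0$). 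Since this multiplier again equals $B^*$ post-composed with the corresponding norm-$\leqslant1$ multiplier on $F^*$, we get $\pi_*(A^*y^*)\in L$, so $A^*y^*\in K$; this proves the main inequality. For the final clause, when $E=F$, $(e_i)=(f_i)$ and $\sup_i\|B_i\|<\infty$, the map $B$ is the bounded diagonal operator $e_i\mapsto\|B_i\|e_i$ on $E$, and $Sz(B)=Sz(B^*B_{E^*})\leqslant Sz(B_{E^*})=Sz(E)$ by Lemma~\ref{compose}(i); substituting yields $Sz(A)\leqslant(\sup_i Sz(B_i))Sz(E)$.

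The delicate step is the construction of $L$ and the verification that it and the $L_i$ meet the hypotheses of Theorem~\ref{infinite direct sum}: the requirement $L\subset[e_i^*:i\in I]$ is precisely what forces the $\ell_1$-preservation dichotomy to be dealt with at the outset, and the unconditionality of $L$ (and the solidity of $L$ implicitly used in passing from $B^*g$ to $\pi_*(A^*y^*)$) must be derived from the multiplier/adjoint commutation rather than assumed. The remaining verifications — that $A$ is bounded, the formula for $A^*$, and the identification of the coordinates of $A^*B_{Y^*}$ — are routine manipulations with the direct-sum identifications.
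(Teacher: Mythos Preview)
Your proof is correct and follows essentially the same route as the paper: set $L=B^*B_{F^*}$, dispose of the case $L\not\subset[e_i^*:i\in I]$ via the diagonal-operator dichotomy (concluding $Sz(B)=\infty$), and then verify the inclusion $A^*B_{Y^*}\subset K$ by the pointwise-domination/suppression argument so that Theorem~\ref{infinite direct sum} applies. You are in fact slightly more careful than the paper in two places: you explicitly check unconditionality of $L$ (which Theorem~\ref{infinite direct sum} requires but the paper does not mention), and you scale $L_i$ by $\|f_i^*\|^{-1}$ to guarantee the coordinate containment $B_i^*y_i^*\in L_i$, whereas the paper takes $L_i=B_i^*B_{Y_i^*}$ without comment (tacitly assuming $\|y_i^*\|\leqslant 1$, which need not hold unless the basis is normalized).
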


\begin{proof}   Let $L_i=B^*_iB_{Y_i^*}$ and $L=B^*B_{F^*}$.  Assume first that $L\subset [e_i^*:i\in I]$.  We can apply Theorem \ref{infinite direct sum}, since $A^*B_{(\oplus_{i\in I} Y_i)^*_F}\subset\{x^* \in X^*\cap \prod_{i\in I}L_i: \pi_*(x^*)\in L\}$, and we finish immediately. To see this inclusion, we first fix $(y_i^*)_{i\in I}\subset B_{(\oplus_{i\in I}Y_i)^*_F}$ and note that the formal series $\sum_{i\in I} \|y^*_i\|f_i^*\in B_{F^*}$.  It is easy to see that $A^*(y_i^*)_{i\in I}= (B_i^*y_i^*)_{i\in I}$, and $$\pi_* A^*(y_i)_{i\in I} = \sum_{i\in I} \|B_i^*y_i^*\|e_i^* \leqslant_{\text{pt}} \sum_{i\in I}\|B_i^*\|\|y_i^*\|e_i^* = B^*\sum_{i\in I}\|y_i^*\|f_i^*\in B^*B_{F^*}.$$ Here, $\leqslant _{\text{pt}}$ denotes coordinate-wise domination.  Since $B_{F^*}$ and therefore $B^*B_{F^*}$, are closed under pointwise suppression, the pointwise suppression $\pi_*A^*(y_i^*)_{i\in I}$ of $B^*\sum_{i\in I}\|y^*_i\|f_i^*\in B^*B_{F^*}$ also lies in $B^*B_{F^*}$.

If $L\not\subset [e_i^*:i\in I]$, then the operator $B:E\to F$ preserves a copy of $\ell_1$, and $I(B)=Sz(B)=\infty$.

\end{proof}

\subsection{Subspace and quotient estimates}

The main result of this subsection is the following.

\begin{theorem} There exists a constant $C>1$ so that if $X$ is any Banach space and $Y$ is any subspace of $X$, $$Sz_\ee(B_{X^*})\leqslant Sz_{\ee/C}(B_{(X/Y)^*}) Sz_{\ee/C}(B_{Y^*}).$$   In particular, $Sz(X)\leqslant Sz(X/Y)Sz(Y)$.  Moreover, for any ordinal $\xi$, $Sz(\cdot)<\omega^{\omega^\xi}$ and $Sz(\cdot)\leqslant \omega^{\omega^\xi}$ are three space properties.  

\label{three space}
\end{theorem}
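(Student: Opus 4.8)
The plan is to recognize the theorem as a ``fibration'' statement directly parallel to Theorem~\ref{infinite direct sum}. Write $q\colon X\to X/Y$ for the quotient map and $\iota\colon Y\to X$ for the inclusion; then $\iota^*\colon X^*\to Y^*$ is the restriction map, a quotient map with $\iota^*B_{X^*}=B_{Y^*}$ and kernel $Y^\perp$, while $q^*\colon (X/Y)^*\to X^*$ is an isometric $w^*$-$w^*$ embedding onto $Y^\perp$. Thus the fibers of $\iota^*$ over points of $B_{Y^*}$ are $w^*$ compact convex subsets of translates of $Y^\perp\cong (X/Y)^*$; these play the role of the summand sets $L_i$ in Theorem~\ref{infinite direct sum} (with $\sup_iSz(L_i)$ now a single quantity attached to $X/Y$), while $B_{Y^*}$ plays the role of $L$. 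Using Theorem~\ref{main theorem} to translate between $Sz_\ee$ and the orders $o_w(\cdot)$ and $o_{(\cdot,\delta)}(\cdot)$ at the cost of a fixed distortion of $\ee$, the whole statement reduces to the estimate
$$o_w\big(\hhh^{B_{X^*}}_\ee\big)\ \leqslant\ \zeta_1\cdot\zeta_2,\qquad \zeta_1:=o_w\big(\hhh^{B_{(X/Y)^*}}_{\ee_1}\big),\quad \zeta_2:=o_{(B_{Y^*},\delta)}\big(\hhh^{B_{Y^*}}_{\ee_0}\big),$$
for suitable $0<\ee_1<\delta<\ee_0<\ee$, all comparable to $\ee$ and with $\ee_1$ small relative to $\delta$.

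To prove this estimate I would argue by transfinite induction on $\zeta$, establishing the relativized statement: given a \emph{compatible stem pair} $s=(x_i)_{i\leqslant k}\in B_X^{<\nn}$, $t=(y_i)_{i\leqslant k}\in B_Y^{<\nn}$ with $\|x_i-(1+\ee_1)y_i\|<\ee_1$ for all $i$ and with $o_w(\hhh^{B_{X^*}}_\ee(s))>\zeta_1\zeta$, there exist a $(B_{Y^*},\delta)$ tree $(y_\gamma)_{\gamma\in\ttt_\zeta^{(B_{Y^*},\delta)}}$ in $\hhh^{B_{Y^*}}_{\ee_0}$ extending $t$ together with a companion family $(x_\gamma)_\gamma\subset B_X$ extending $s$ that remains compatible. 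The successor step is the heart. From $o_w(\hhh^{B_{X^*}}_\ee(s))>\zeta_1\zeta+\zeta_1$ one gets $o_w(\hhh'')>\zeta_1$ for $\hhh'':=(\hhh^{B_{X^*}}_\ee)^{\zeta_1\zeta}_w(s)$. For each target $V\in(B_{Y^*},\delta)$, extend the (norm $\leqslant 1$) functionals defining $V$ by Hahn--Banach to $B_{X^*}$ and let $\hat V\in\mmm(X)$ be the resulting neighbourhood of $0$ in $X$ with parameter $\delta/2$; by Lemma~\ref{lemma2}, after a nice reindexing of $\ttt_{\zeta_1}^{\mmm(X)}$ replacing each coordinate $U$ by $U\cap\hat V$, fix a $\ttt_{\zeta_1}^{\mmm(X)}$ tree $(u_\alpha)$ in $\hhh''$ with every $u_\alpha\in\hat V$. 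The key observation: not every convex combination along every branch of $(u_\alpha)$ can have $q$-image of norm $\geqslant\ee_1$ in $X/Y$; otherwise, via the nice reindexing sending a coordinate $V'\in\mmm(X/Y)$ to $q^{-1}(V')\in\mmm(X)$ and the Hahn--Banach ($\ell_1^+$) characterisation recorded before Remark~\ref{useful remark}, the images $(qu_\alpha)$ would form an $\mmm(X/Y)$ tree in $\hhh^{B_{(X/Y)^*}}_{\ee_1}$, giving $o_w(\hhh^{B_{(X/Y)^*}}_{\ee_1})>\zeta_1$, contradicting the definition of $\zeta_1$. Hence there is a convex combination $x^V$ of some branch with $x^V\in\hat V$ and $\mathrm{dist}(x^V,Y)<\ee_1$; choosing $y'\in Y$ with $\|x^V-y'\|<\ee_1$ and setting $y^V:=y'/(1+\ee_1)\in B_Y$, one checks: (i) $y^V\in V$, because the functionals defining $V$ have norm $\leqslant 1$ and $\ee_1$ is small relative to $\delta$; (ii) $o_w(\hhh^{B_{X^*}}_\ee(s\cat(x^V)))>\zeta_1\zeta$, because $s\cat(x^V)$ is a convex block of the member $s\cat(u_{\alpha|_i})_i$ of $(\hhh^{B_{X^*}}_\ee)^{\zeta_1\zeta}_w$ and the latter is closed under convex blocks; (iii) any $f\in B_{X^*}$ witnessing $s\cat(u_{\alpha|_i})_i\in\hhh^{B_{X^*}}_\ee$ restricts to $f|_Y\in B_{Y^*}$ with $f|_Y(y_i),f|_Y(y^V)\geqslant(\ee-\ee_1)/(1+\ee_1)\geqslant\ee_0$, so $t\cat(y^V)\in\hhh^{B_{Y^*}}_{\ee_0}$. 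Applying the inductive hypothesis to the compatible pair $(s\cat(x^V),\,t\cat(y^V))$ and grafting the resulting trees below the root-child $(\zeta+1,V)$ completes the successor step; the limit step is routine via $\ttt_\gamma^D=\bigcup_{\eta<\gamma}\ttt_{\eta+1}^D$ for limit $\gamma$, and $\zeta=0$ is trivial. Taking $s=t=\varnothing$ and $\zeta=\zeta_2$: if $o_w(\hhh^{B_{X^*}}_\ee)>\zeta_1\zeta_2$ the construction produces a $(B_{Y^*},\delta)$ tree in $\hhh^{B_{Y^*}}_{\ee_0}$ witnessing $o_{(B_{Y^*},\delta)}(\hhh^{B_{Y^*}}_{\ee_0})>\zeta_2$, contradicting the definition of $\zeta_2$; hence $o_w(\hhh^{B_{X^*}}_\ee)\leqslant\zeta_1\zeta_2$, and Theorem~\ref{main theorem} converts this into $Sz_\ee(B_{X^*})\leqslant Sz_{\ee/C}(B_{(X/Y)^*})\,Sz_{\ee/C}(B_{Y^*})$ for an absolute $C>1$.

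The three space statements then follow formally. For ``$Sz(X)\leqslant Sz(X/Y)Sz(Y)$'' take $\sup_{\ee>0}$ of the displayed inequality, using that ordinal multiplication is monotone in each variable and that $\sup_{\ee>0}Sz_{\ee/C}(K)=Sz(K)$. For the two three space properties, observe that for any non-empty $w^*$ compact $K$ and any delta number $\lambda>1$, $Sz(K)\leqslant\lambda$ forces $Sz_\ee(K)<\lambda$ for every $\ee>0$: by Proposition~\ref{special form} either $K$ is norm compact and $Sz_\ee(K)=1<\lambda$, or $Sz(K)=\sup_{\ee>0}Sz_\ee(K)$ is a gamma number $\leqslant\lambda$ that is not attained, so each $Sz_\ee(K)<Sz(K)\leqslant\lambda$. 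Apply this with the delta number $\lambda=\omega^{\omega^\xi}>1$: if $Sz(X/Y)\leqslant\lambda$ and $Sz(Y)\leqslant\lambda$, then for every $\ee$ the product $Sz_{\ee/C}(B_{(X/Y)^*})\,Sz_{\ee/C}(B_{Y^*})$ has both factors $<\lambda$, hence is $<\lambda$ (delta numbers are closed under ordinal multiplication below themselves); so $Sz_\ee(B_{X^*})<\lambda$ for all $\ee$ and $Sz(X)=\sup_{\ee>0}Sz_\ee(B_{X^*})\leqslant\lambda$. The statement for ``$<\omega^{\omega^\xi}$'' is immediate from $Sz(X)\leqslant Sz(X/Y)Sz(Y)$ and the same closure. (The remaining halves of the three space properties are standard: $B_{Y^*}=\iota^*B_{X^*}$ gives $Sz(Y)\leqslant Sz(X)$ by Lemma~\ref{compose}(i), and $q^*$ maps $B_{(X/Y)^*}$ $w^*$-homeomorphically and isometrically onto the $w^*$ compact subset $B_{X^*}\cap Y^\perp$, whose Szlenk index is at most $Sz(B_{X^*})$ since passing to a $w^*$ compact subset cannot increase $s_\ee$.)

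The main obstacle is the successor step, and in it the single point that genuinely goes beyond Theorem~\ref{infinite direct sum}: replacing a vector $x\in B_X$ with small $q$-image by a nearby vector of $Y$ incurs an error of a \emph{fixed} size $\ee_1$, with no analogue of the ``shrinking supports'' device available to push it to $0$, so the error cannot be controlled against arbitrary functionals. This is exactly why the outer derivation must be run through the coarse directed set $(B_{Y^*},\delta)$ rather than $\mmm(Y)$: the basic neighbourhoods of $(B_{Y^*},\delta)$ are cut out by functionals of norm at most $1$, against which a single error $\ee_1$ small relative to $\delta$ is harmless, so the net of $Y$-vectors that is produced genuinely converges in the $(B_{Y^*},\delta)$ topology. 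Granting this, localising the $\ttt_{\zeta_1}^{\mmm(X)}$ tree inside a prescribed $\hat V$ and carrying the two-stem bookkeeping so that everything stays in the appropriate derived trees is a long but essentially routine transcription of the proof of Theorem~\ref{infinite direct sum}.
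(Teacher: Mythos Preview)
Your proposal is correct and follows essentially the same approach as the paper: the paper proves the identical key inequality $o_w(\hhh^{B_{X^*}}_\ee)\leqslant o_w(\hhh^{B_{(X/Y)^*}}_\rho)\,o_{(B_{Y^*},\delta/2)}(\hhh^{B_{Y^*}}_{(\ee-\rho)/2})$ via the same ``find a convex combination with small quotient norm, else contradict $\zeta_1$'' successor step, and makes the same crucial use of the coarse directed set $(B_{Y^*},\delta)$ to absorb the fixed-size perturbation error. The only organizational difference is that the paper first builds the full $\aaa_\zeta$-indexed tree of $X$-vectors with small quotient norm and then perturbs to $Y$ in a second induction, whereas you carry the compatible $Y$-stem along from the start and build directly over $\ttt_\zeta^{(B_{Y^*},\delta)}$; both arrangements work.
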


\begin{remark}

In \cite{L0}, it was shown that in the case that $Sz(Y), Sz(X/Y)<\omega_1$, $Sz(X)\leqslant \omega Sz(X/Y) Sz(Y)$ using the slicing definition of the Szlenk index. In \cite{BL}, it was shown that in the case that $Sz(Y), Sz(X/Y)<\omega_1$, $Sz(X)\leqslant Sz(X/Y) Sz(Y)$, also using the slicing definition.  Our proof establishes this result without any assumptions on $Sz(Y)$, $Sz(X/Y)$.

\end{remark}

\begin{lemma} For any subspace $Y$ of $X$, $Sz(Y), Sz(X/Y)\leqslant Sz(X)$.  

\label{three space lemma}

\end{lemma}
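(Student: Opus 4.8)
The plan is to produce, for each $w^*$ compact subset we care about, a continuous linear map whose adjoint carries $B_{X^*}$ (or a $w^*$ compact piece of it) onto the relevant set, and then invoke the monotonicity result already available, namely Lemma \ref{compose}$(i)$, which says $Sz(A^*K)\leqslant Sz(K)$. For the quotient estimate $Sz(X/Y)\leqslant Sz(X)$, I would let $q\colon X\to X/Y$ be the quotient map; then $q^*\colon (X/Y)^*\to X^*$ is an isometric embedding with $q^*B_{(X/Y)^*}\subset B_{X^*}$. Applying Lemma \ref{compose}$(i)$ with $A=q$ and $K=B_{X^*}$ gives $Sz(q^*B_{X^*})\leqslant Sz(B_{X^*})=Sz(X)$, but $q^*B_{X^*}\supset q^*B_{(X/Y)^*}=B_{(X/Y)^*}$ as a $w^*$ compact convex symmetric set, so $Sz(X/Y)=Sz(B_{(X/Y)^*})\leqslant Sz(q^*B_{X^*})\leqslant Sz(X)$. (Here I am using the obvious monotonicity of $\hhh^{(\cdot)}_\ee$, hence of $o_w(\hhh^{(\cdot)}_\ee)$, hence of $Sz$, under inclusion of $w^*$ compact sets, together with Theorem \ref{main theorem}.)

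For the subspace estimate $Sz(Y)\leqslant Sz(X)$, I would let $j\colon Y\hookrightarrow X$ be the inclusion and $j^*\colon X^*\to Y^*$ the restriction map. By Hahn--Banach, $j^*B_{X^*}=B_{Y^*}$, so Lemma \ref{compose}$(i)$ with $A=j$, $K=B_{X^*}$ gives directly $Sz(Y)=Sz(B_{Y^*})=Sz(j^*B_{X^*})\leqslant Sz(B_{X^*})=Sz(X)$. Alternatively, and perhaps more transparently, one can argue purely at the level of the trees $\hhh$: if $t=(y_1,\dots,y_n)\in B_Y^{<\nn}$ lies in $\hhh^{B_{Y^*}}_\ee$, witnessed by $y^*\in B_{Y^*}$, extend $y^*$ to $x^*\in B_{X^*}$ by Hahn--Banach and observe $t\in\hhh^{B_{X^*}}_\ee$ as a sequence in $B_X^{<\nn}$; one then checks by transfinite induction on $\xi$ that $(\hhh^{B_{Y^*}}_\ee)^\xi_w\subset(\hhh^{B_{X^*}}_\ee)^\xi_w$, using that the weak topology of $Y$ is the restriction of that of $X$ so that a weakly null net in $B_Y$ is weakly null in $B_X$. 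Either way one gets $o_w(\hhh^{B_{Y^*}}_\ee)\leqslant o_w(\hhh^{B_{X^*}}_\ee)$ for every $\ee>0$, hence $Sz(Y)\leqslant Sz(X)$ by Theorem \ref{main theorem}.

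Combining the two inequalities yields the statement. I do not anticipate a genuine obstacle here: both halves are essentially applications of Lemma \ref{compose}$(i)$ to the canonical maps $j\colon Y\to X$ and $q\colon X\to X/Y$, the only points requiring a word of care being the two standard identifications $j^*B_{X^*}=B_{Y^*}$ and $q^*B_{(X/Y)^*}=B_{(X/Y)^*}\subset q^*B_{X^*}$ coming from Hahn--Banach, together with the elementary monotonicity of $Sz$ under $w^*$ compact inclusions via Theorem \ref{main theorem}. If a self-contained argument not relying on Lemma \ref{compose} is preferred, the transfinite-induction sketch above for the subspace case (and its analogue using $q^*$ for the quotient case) supplies it with no new ideas.
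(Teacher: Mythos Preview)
Your subspace argument is correct, and the direct tree-inclusion version you sketch is exactly what the paper does: $\hhh^{B_{Y^*}}_\ee \subset \hhh^{B_{X^*}}_\ee$ via Hahn--Banach, hence $(\hhh^{B_{Y^*}}_\ee)^\xi_w \subset (\hhh^{B_{X^*}}_\ee)^\xi_w$ for every $\xi$, and the inequality follows. Your alternative via Lemma~\ref{compose}(i) with $A=j$ and $K=B_{X^*}$ is also fine, since $j^*B_{X^*}=B_{Y^*}$.

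The quotient argument, however, does not go through as written. In Lemma~\ref{compose}(i) the set $K$ must lie in the dual of the \emph{codomain} of $A$; with $A=q\colon X\to X/Y$ this forces $K\subset (X/Y)^*$, so taking $K=B_{X^*}$ is not meaningful, and the expression ``$q^*B_{X^*}$'' is undefined. If instead you feed in $K=B_{(X/Y)^*}$, Lemma~\ref{compose}(i) yields $Sz(q^*B_{(X/Y)^*})\leqslant Sz(B_{(X/Y)^*})$, which is the wrong direction. What you actually need is the reverse inequality $Sz(B_{(X/Y)^*})\leqslant Sz(q^*B_{(X/Y)^*})$, i.e.\ invariance of the Szlenk index under the isometric $w^*$--$w^*$ embedding $q^*$. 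This is immediate from the slicing definition, but the paper has explicitly committed to avoiding slicings after Theorem~\ref{main theorem}, and at the level of the trees $\hhh$ it is \emph{not} obvious: the sequences in $\hhh^{B_{(X/Y)^*}}_\ee$ live in $B_{X/Y}$, those in $\hhh^{q^*B_{(X/Y)^*}}_\ee$ live in $B_X$, and passing from the former to the latter requires a bounded lifting. Your closing remark that ``its analogue using $q^*$ for the quotient case supplies it with no new ideas'' underestimates exactly this point.

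The paper supplies the missing ingredient with a small lifting lemma: given a weakly null net $(w_V)_{V\in\mmm(X/Y)}\subset B_{X/Y}$ and any $U\in\mmm(X)$, there exist $V_1$ and $x\in 5B_X\cap U$ with $Qx=w_{V_1}$. (The factor $5$ appears because an arbitrary bounded lift need not land in $U$; one passes to a subnet so that differences of lifts lie in $\tfrac12 U$, subtracts a small-norm convex combination, and corrects.) With this in hand, an induction on $\xi$ shows that if $o_w(\hhh^{B_{(X/Y)^*}}_\ee(s))>\xi$ then there is a weakly null tree $(x_\alpha)_{\alpha\in\aaa_\xi}\subset 5B_X$ whose $Q$-images extend $s$ in $\hhh^{B_{(X/Y)^*}}_\ee$; dividing by $5$ yields a tree witnessing $o_w(\hhh^{B_{X^*}}_{\ee/5})>\xi$, and $Sz(X/Y)\leqslant Sz(X)$ follows. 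So the quotient half is not a free consequence of Lemma~\ref{compose}: either argue via the slicing definition (short, but outside the paper's declared framework), or carry out the lifting argument.
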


For the proofs in this subsection, recall that for a Banach space $Z$, $\mmm(Z)$ denotes our specified weak neighborhood basis of zero in the Banach space $Z$.  

\begin{proof} Of course, it is trivial to see that for any $\ee>0$ and any $\xi\in \ord$, $(\hhh^{B_{Y^*}}_\ee)_w^\xi\subset (\hhh^{B_{X^*}}_\ee)^\xi_w$, so $$Sz(Y)=\sup_{\ee>0}o_w(\hhh^{B_{Y^*}}_\ee) \leqslant \sup_{\ee>0} o_w(\hhh^{B_{X^*}}_\ee) = Sz(X).$$  

Next, note that if $(w_V)_{V\in \mmm(X/Y)}\subset B_{X/Y}$ is a weakly null net and if $U\in \mmm(X)$, there exist $V_1\in \mmm(X/Y)$ and $x\in 5B_X$ so that $x\in U$ and $Qx=w_{V_1}$. Here, $Q:X\to X/Y$ is the quotient map.  To see this, first choose $(x_V)_{V\in \mmm(X/Y)}\subset 2 B_X$ so that $Qx_V=w_V$ for all $V\in \nnn$.  By passing to a subnet $(x_V)_{V\in D}$, we may assume that $x_{V_1}-x_{V_2}\in \frac{1}{2}U$ for all $V_1, V_2\in D$. Choose $\ee\in (0,1)$ so that $\ee B_X\subset \frac{1}{2}U$.   Since $(w_V)_{V\in D}$ is a weakly null net, there exists a convex combination $w$ of $(w_V)_{V\in D}$ with $\|w\|<\ee$.  Let $u_1$ be the corresponding convex combination of $(x_V)_{V\in D}$. Note that $\|u_1\|\leqslant 2$ and $Qu_1=w$.  Fix $V_1\in D$ and let $u_2=x_{V_1}- u_1$.  Since $U$ is convex, and since $u_2$ is a convex combination of members of $\frac{1}{2}U$, $u_2\in \frac{1}{2}U$.  Moreover, $\|x_{V_1}\|, \|u_1\|\leqslant 2$, so $\|u_2\|\leqslant 4$. Also, $\|Qu_2- w_{V_1}\|= \|w\|<\ee$.  Choose $u_3\in X$ with $\|u_3\|<\ee$ so that $Qu_3=w$.  Then $u_2+ u_3\in \frac{1}{2}U+ \frac{1}{2}U=U$ and $Q(u_2+u_3)= w_{V_1}.$  Taking $x=u_2+u_3$ finishes the claim.  

Next, we claim that if $s\in \hhh^{B_{(X/Y)^*}}_\ee$ is such that $o_w(\hhh^{B_{(X/Y)^*}}_\ee(s))>\xi$, there exists a collection $(x_\alpha)_{\alpha\in \aaa_\xi}\subset 5B_X$ so that for each $\alpha\in \aaa_\xi$,  \begin{enumerate}[(i)]\item $(Qx_{\alpha|_i})_{i=1}^{|\alpha|}\in \hhh^{B_{(X/Y)^*}}_\ee(s)$, \item if $\alpha=(t, (U_1, \ldots, U_n))$, $x_\alpha \in U$.  \end{enumerate}  In particular, taking $s=\varnothing$, we deduce that if $o_w(\hhh^{B_{(X/Y)^*}}_\ee)>\xi$, there exists $(x_\alpha)_{\alpha\in \aaa_\xi}$ satisfying properties (i) and (ii). By Lemma \ref{lemma2}, we deduce that $(x_\alpha/5)_{\alpha\in \aaa_\xi}$ witnesses the fact that $o_w(\hhh^{B_{X^*}}_{\ee/5})>\xi$, which finishes the proof once we have the claim.  

Of course, the proof of the claim is by induction.  The $\xi=0$ and $\xi$ a limit cases are trivial.  Assume the result holds for some $\xi$ and suppose $o_w(\hhh^{B_{(X/Y)^*}}_\ee(s))>\xi+1.$  This means we can find a weakly null net $(w_V)_{V\in \mmm(X/Y)}\subset B_{X/Y}$ so that $o_w(\hhh^{B_{(X/Y)^*}}_\ee(s\cat w_V))>\xi$ for all $V\in \mmm(X/Y)$.  For a given $U\in \mmm(X)$, using the claim above, we can choose $x_U\in U$ and $V_U\in \mmm(X/Y)$ with $\|x_U\|\leqslant 5$ so that $Qx_U=w_{V_U}$.  Applying the inductive hypothesis to $s\cat w_{V_U}$, we deduce the existence of some $(x^U_\alpha)_{\alpha\in \aaa_\xi}$ satisfying (i) and (ii) with $s$ replaced by $s\cat w_{V_U}$.  We then define $(x_\alpha)_{\alpha\in \aaa_\xi}$ by letting $$x_{(\xi+1, U)}=x_U,$$ $$x_{(\xi+1, U)\cat (t, U)}= x^U_{(t, \sigma)}$$ for $t\in \ttt_\xi$.

\end{proof}

\begin{proof}[Proof of Theorem \ref{three space}] Recall that for $\delta>0$ we let $$(B_{Y^*}, \delta) =\Bigl\{\{y\in Y: |y^*(y)|< \delta\text{\ }\forall y^*\in F\}: F\subset B_{Y^*} \text{\ finite}\Bigr\}. $$  We will show that for any $\ee\in (0,1)$ and any $0<\rho < \delta<\ee-\rho$, \begin{equation} o_w(\hhh^{B_{X^*}}_\ee)\leqslant o_w(\hhh^{B_{(X/Y)^*}}_\rho)o_{(B_{Y^*},\delta/2)}(\hhh^{B_{Y^*}}_{(\ee-\rho)/2}).\tag{1}\end{equation} We first assume the inequality $(1)$ and prove the theorem, and then return to the proof of $(1)$.

The proof of Theorem \ref{main theorem} yields that $$Sz_{5\ee}(B_{X^*})\leqslant o_w(\hhh^{B_{X^*}}_\ee) \leqslant o_w(\hhh^{B_{(X/Y)^*}}_\rho) o_{(B_{Y^*}, \delta/2)}(\hhh^{B_{Y^*}}_{(\ee-\rho)/2}) \leqslant Sz_{\rho/2}(B_{(X/Y)^*}) Sz_{(\ee-\rho-\delta)/4}(B_{Y^*}).$$ Setting $\rho=\ee/4$ and $\delta=2\rho$ yields the first statement of the theorem with $C=80$.  

It follows from Lemma \ref{three space lemma} that if $Sz(X)<\omega^{\omega^\xi}$ (resp. $Sz(X)\leqslant \omega^{\omega^\xi}$), the same inequality holds for both $Sz(Y)$ and $Sz(X/Y)$.  If $Sz(Y)$, $Sz(X/Y)< \omega^{\omega^\xi}$, $(1)$ immediately yields that $$o_w(\hhh^{B_{X^*}}_\ee)\leqslant Sz(X/Y)Sz(Y)<\omega^{\omega^\xi}$$ for any $\ee\in (0,1)$, which yields that $Sz(\cdot)<\omega^{\omega^\xi}$ is a three space property.  Here we have used the fact that if $\zeta, \eta<\omega^{\omega^\xi}$, $\zeta\eta<\omega^{\omega^\xi}$.  If $Sz(Y), Sz(X/Y)\leqslant \omega^{\omega^\xi}$, then for any $\ee\in (0,1)$, choose any $0<\rho<\delta < \ee-\rho$ and note that by Proposition \ref{special form}$(iii)$, $o_w(\hhh_\rho^{B_{(X/Y)^*}}), o_{(B_{Y^*}, \delta/2)}(\hhh^{B_{Y^*}}_{(\ee-\rho)/2})$ must be strictly less than $\omega^{\omega^\xi}$.  Then inequality $(1)$ gives that $o_w(\hhh^{B_{X^*}}_\ee)< \omega^{\omega^\xi}$, and $Sz(\cdot)\leqslant \omega^{\omega^\xi}$ is a three space property.

We now return to the proof of $(1)$.  Let $\xi=o_w(\hhh^{B_{(X/Y)^*}}_\rho)$ and assume $\xi\in \ord$ (otherwise the result is trivial).  We claim that for any $\zeta\in \ord$ and any $s\in \hhh^{B_{X^*}}_\ee$ so that $o_w(\hhh^{B_{X^*}}_\ee(s))>\xi\zeta$, there exists $(x_\alpha)_{\alpha\in \aaa_\zeta}$ so that for all $\alpha\in \aaa_\zeta$, \begin{enumerate}[(i)]\item $\|x_\alpha\|_{X/Y}< \rho$, \item $(x_{\alpha|_i})_{i=1}^{|\alpha|}\in \hhh^K_\ee(s)$, \item if $\alpha=(t, (U_1, \ldots, U_n))$, $x_\alpha\in U_n$. \end{enumerate}  The $\zeta=0$ and $\zeta$ a limit ordinal case are trivial.  Assume the result holds for a given $\zeta$ and assume $s\in \hhh^{B_{X^*}}_\ee$ is such that $o_w(\hhh^{B_{X^*}}_\ee(s))>\xi\zeta+\xi=\xi(\zeta+1)$.  We will show that for each $U\in \mmm(X)$, there exists $x_U\in U\cap B_X$ so that $o_w(\hhh^{B_{X^*}}_\ee(s\cat x_U))>\xi\zeta$ and $\|x_U\|_{X/Y}<\rho$.  Let $\hhh=(\hhh^{B_{X^*}}_\ee)_w^{\xi\zeta}(s)$ and note that $o_w(\hhh)>\xi$.  By Lemma \ref{lemma2}, we can fix $(z_\alpha)_{\alpha\in \aaa_\xi}$ so that for each $\alpha\in \aaa_\xi$, \begin{enumerate}[(i)]\item $(z_{\alpha|_i})_{i=1}^{|\alpha|}\in \hhh$, \item if $\alpha=(t, (U_1, \ldots, U_n))$, $z_\alpha\in U_n$. \end{enumerate} By replacing $z_{(t, (U_1, \ldots, U_n))}$ with $z_{(t, (U\cap U_1, \ldots, U\cap U_n))}$, we may assume $z_\alpha\in U\cap B_X$ for all $\alpha\in \aaa_\xi$.  If for each $\alpha\in \aaa_\xi$, every convex combination $z$ of $(z_{\alpha|_i})_{i=1}^{|\alpha|}$ is such that $\|z\|_{X/Y}\geqslant \rho$, we claim that $(Qz_\alpha)_{\alpha\in \aaa_\xi}$ would imply that $o_w(\hhh^{B_{(X/Y)^*}}_\rho)>\xi$, which would be a contradiction.  To see this, we claim that for every $0\leqslant \eta\leqslant \xi$ and every $\alpha\in (\aaa_\xi\cup \varnothing)^\eta$, $(Qz_{\alpha|_i})_{i=1}^{|\alpha|}\in (\hhh^{B_{(X/Y)^*}}_\rho)^\eta_w$.  The $\eta=0$ case and the $\eta$ a limit ordinal cases are clear.  Suppose $\alpha=(t, \sigma)\in \aaa_\xi^{\eta+1}$, which happens if and only if $t\in \ttt_\xi^{\eta+1}$.  Let $t_0$ be an immediate successor of $t$ in $\ttt_\xi^\eta$.  Then for every $U\in \mmm(X)$, $(Qz_{\alpha|_i})_{i=1}^{|\alpha|}\smallfrown  Qz_{(t_0, \sigma\cat U)}\in (\hhh^{B_{(X/Y)^*}}_\rho)^\eta_w$ and, since $(Qz_{(t_0, \sigma\cat U)})_{U\in \mmm(X)}$ is a weakly null net, we deduce $(Qz_{\alpha|_i})_{i=1}^{|\alpha|}\in ((\hhh^{B_{(X/Y)^*}}_\rho)_w^\eta)_w'= (\hhh^{B_{(X/Y)^*}}_\rho)_w^{\eta+1}$.  This completes the inductive proof, and guarantees that there must exist some convex combination $x_U$ of some $(z_{\alpha|_i})_{i=1}^{|\alpha|}$ so that $\|x_U\|_{X/Y}<\rho$.  But since $s\cat (z_{\alpha|_i})_{i=1}^{|\alpha|}\in (\hhh^{B_{X^*}}_\ee)^{\xi\zeta}_w$, the convex block $s\cat x_U$ lies in $(\hhh^{B_{X^*}}_\ee)^{\xi\zeta}_w$ as well.  This implies that $o_w(\hhh^{B_{X^*}}_\ee(s\cat x_U))>\xi\zeta$.  By the inductive hypothesis, this means that for each $U\in \mmm(X)$, there exists $(x^U_\alpha)_{\alpha\in \aaa_\zeta}$ satisfying (i)-(iii) with $s$ replaced by $s\cat x_U$.  We define $(x_\alpha)_{\alpha \in \aaa_{\zeta+1}}$ by letting $$x_{((\zeta+1), U)}= x_U$$ and $$x_{(\zeta+1, U)\cat (t,\sigma)}= x^U_{(t, \sigma)}.$$  This completes the claim.  

We last show that for $0<\zeta$, if $(x_\alpha)_{\alpha\in \aaa_\zeta}$ satisfies (i)-(iii) of the previous paragraph, and if $(y_\alpha)_{\alpha\in \aaa_\zeta}\subset Y$ is chosen so that $\|x_\alpha-y_\alpha\|<\rho$ for all $\alpha\in \aaa_\zeta$, then for any $0\leqslant \eta\leqslant \zeta$, for any $\alpha\in (\aaa_\zeta\cup \{\varnothing\})^\eta$, $(y_{\alpha|_i}/2)_{i=1}^{|\alpha|}\in (\hhh^{B_{Y^*}}_{(\ee-\rho)/2})^\eta_{(B_{Y^*}, \delta/2)}$.  This will imply that $\varnothing \in (\hhh^{B_{Y^*}}_{(\ee-\rho)/2})^\zeta_{(B_{Y^*}, \delta/2)}$ and $o_{(B_{Y^*}, \delta/2)}(\hhh^{B_{Y^*}}_{(\ee-\rho)/2})>\zeta$, yielding $(1)$. First note that $\|y_\alpha\|\leqslant \rho + \|x_\alpha\|\leqslant 2$.  For the base case, since $(y_{\alpha|_i}/2)_{i=1}^{|\alpha|}$ is a $\rho/2$-perturbation of $(x_{\alpha|_i}/2)_{i=1}^{|\alpha|}$, $(y_{\alpha|_i}/2)_{i=1}^{|\alpha|}\in \hhh^{B_{Y^*}}_{(\ee-\rho)/2}$.  The limit ordinal case is trivial.  Assume the result holds for a given $\eta<\zeta$.   Fix $0<\mu<\delta-\rho$ and $$U=\{y\in Y: |y^*(y)|<\delta/2 \text{\ }\forall y^*\in F\}\in (B_{Y^*}, \delta/2)$$ for some finite set $F\subset B_{Y^*}$. Let $E\subset B_{X^*}$ consist of Hahn-Banach extensions of each member of $F$ and let $$V=\{x\in X: |x^*(x)|<\mu \text{\ }\forall x^*\in F\}\in \mmm(X).$$  Fix $\alpha\in (\aaa_\zeta\cup \{\varnothing\})^{\eta+1}$. If $\alpha=\varnothing$, let $t_0$ be an immediate successor of $\varnothing$ in $\mt_\zeta^\eta$.  If $\alpha\neq \varnothing$, write $\alpha=(t, \sigma)$ and let $t_0$ be an immediate successor of $t$ in $\mt_\zeta^\eta$.  Then since $x_{(t_0, \sigma\cat V)}\in V$ and $\|x_{(t_0, \sigma\cat V)} - y_{(t_0, \sigma\cat V)}\|<\rho$, we deduce that $y_{(t_0, \sigma \cat V)}/2\in U$ and $(y_{\alpha|_i})_{i=1}^{|\alpha|}\smallfrown y_{(t_0, \sigma \cat V)}\in (\hhh^{B_{Y^*}}_{(\ee-\rho)/2})^\eta_{(B_{Y^*}, \delta/2)}$. Since $U$ was an arbitrary member of $(B_{Y^*}, \delta/2)$, we deduce $(y_{\alpha|_i})_{i=1}^{|\alpha|}\in (\hhh^{B_{Y^*}}_{(\ee-\rho)/2})_{(B_{Y^*}, \delta/2)}^{\eta+1}$, which finishes the proof.

\end{proof}

\subsection{Constant reduction}

The main result of this subsection is the following. 

\begin{theorem} Let $X$ be a Banach space and $K=B_{X^*}$.  Then for any $\delta, \ee\in (0,1)$, $$o_w(\hhh^K_{\delta\ee})\leqslant o_w(\hhh^K_\delta)o_w(\hhh^K_\ee).$$ In particular, if $\xi\in \emph{\ord}$ and $o_w(\hhh^K_\ee)>\omega^{\omega^\xi}$ for some $\ee\in (0,1)$, then $o_w(\hhh^K_\ee)>\omega^{\omega^\xi}$ for every $\ee\in (0,1)$.  If $\xi$ is a limit ordinal, then $Sz(X)\neq \omega^{\omega^\xi}$.

\label{constant reduction}
\end{theorem}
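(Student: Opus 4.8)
The plan is to obtain the last assertion as a short consequence of the theorem's second assertion, which itself rests on the displayed submultiplicativity inequality; so I will first indicate how those two go, and then give the deduction of the final statement in detail, since that is what is asked. For the inequality $o_w(\hhh^K_{\delta\ee})\le o_w(\hhh^K_\delta)o_w(\hhh^K_\ee)$, put $\gamma=o_w(\hhh^K_\delta)$ and $\eta=o_w(\hhh^K_\ee)$, assuming both are ordinals (otherwise the right side is $\infty$). I would prove, by a transfinite induction relativized to nodes, that $o_w(\hhh^K_{\delta\ee})\le\gamma\eta$, the engine being that no $\mmm$ tree indexed by $\aaa_\gamma$ can lie inside $\hhh^K_\delta$ (Lemma \ref{lemma2} together with $o_w(\hhh^K_\delta)=\gamma$); consequently, inside any $\mmm$ tree of $\mmm$-order at least $\gamma$ sitting in $\hhh^K_{\delta\ee}$ over a given node, some branch must exit $\hhh^K_\delta$, and then the convexity and symmetry of $K=B_{X^*}$ (Remark \ref{useful remark}), together with the fact that each derived family $(\hhh^K_c)_w^\beta$ is closed under convex blocks, allow one to replace that branch by a convex block producing one further $\ee$-derivative while remaining in $(\hhh^K_{\delta\ee})_w^{\gamma\zeta}$ — so that roughly $\gamma$ derivations are spent per $\ee$-step. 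Matching the $B_{X^*}$-functionals that witness membership in $\hhh^K_\ee$ to the convex combinations coming out of the $\gamma$-block is the delicate bookkeeping, carried out along the lines of the countable case of \cite{C2} and of Proposition \ref{special form}; this is where I expect the main obstacle to lie.

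Granting the inequality, the second assertion follows quickly. Fix $\ee\in(0,1)$. If $\ee\le\ee_0$, then $\hhh^K_{\ee_0}\subseteq\hhh^K_\ee$, so $o_w(\hhh^K_\ee)\ge o_w(\hhh^K_{\ee_0})>\omega^{\omega^\xi}$. If $\ee>\ee_0$, choose $n\in\nn$ with $\ee^n\le\ee_0$; then $\hhh^K_{\ee_0}\subseteq\hhh^K_{\ee^n}$, and iterating the inequality (using associativity of ordinal multiplication) gives $\omega^{\omega^\xi}<o_w(\hhh^K_{\ee_0})\le o_w(\hhh^K_{\ee^n})\le\bigl(o_w(\hhh^K_\ee)\bigr)^n$ (if $o_w(\hhh^K_\ee)=\infty$ there is nothing to prove, so assume it is an ordinal). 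Now $\omega^{\omega^\xi}$ is a delta number, so every $\beta<\omega^{\omega^\xi}$ satisfies $\beta^n<\omega^{\omega^\xi}$; hence if $o_w(\hhh^K_\ee)$ were strictly below $\omega^{\omega^\xi}$ we would get $\bigl(o_w(\hhh^K_\ee)\bigr)^n<\omega^{\omega^\xi}$, a contradiction. Thus $o_w(\hhh^K_\ee)\ge\omega^{\omega^\xi}$, which is the form I use below (the strict inequality stated in the theorem requires a little more, but is not needed for the final assertion).

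Finally, here is the deduction of the last statement. Suppose toward a contradiction that $\xi$ is a limit ordinal and $Sz(X)=\omega^{\omega^\xi}$. Since $\xi$ is a limit, $\omega^{\omega^\xi}>1$, so $X$ is infinite dimensional and $K=B_{X^*}$ is not norm compact; hence by Proposition \ref{special form}(iii) the value $Sz(X)=\sup_{\ee>0}o_w(\hhh^K_\ee)$ (Theorem \ref{main theorem}) is not attained, so $o_w(\hhh^K_\ee)<\omega^{\omega^\xi}$ for every $\ee\in(0,1)$. On the other hand, $\xi$ being a limit gives $\omega^{\omega^\xi}=\sup_{\zeta<\xi}\omega^{\omega^\zeta}$ with $\omega^{\omega^\zeta}<\omega^{\omega^\xi}$ for each $\zeta<\xi$, so for every $\zeta<\xi$ there is $\ee_\zeta\in(0,1)$ with $o_w(\hhh^K_{\ee_\zeta})>\omega^{\omega^\zeta}$. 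By the second assertion of the theorem, applied with $\zeta$ in place of $\xi$, this forces $o_w(\hhh^K_\ee)>\omega^{\omega^\zeta}$ for every $\ee\in(0,1)$. Fixing a single $\ee\in(0,1)$, we conclude that $o_w(\hhh^K_\ee)>\omega^{\omega^\zeta}$ for all $\zeta<\xi$, whence $o_w(\hhh^K_\ee)\ge\sup_{\zeta<\xi}\omega^{\omega^\zeta}=\omega^{\omega^\xi}$, contradicting $o_w(\hhh^K_\ee)<\omega^{\omega^\xi}$. Therefore $Sz(X)\ne\omega^{\omega^\xi}$, completing the argument.
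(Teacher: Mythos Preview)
Your deductions of the second and third assertions from the submultiplicativity inequality are correct and match the paper's argument (the paper simply fixes the test value $\ee=1/2$ in the third part rather than arguing by contradiction via Proposition \ref{special form}, but the content is the same). The strict inequality in part two that you flag as needing ``a little more'' is in fact free: since $\varnothing\in\hhh^K_\ee$ always, the index $o_w(\hhh^K_\ee)$ is a successor ordinal, so $\geqslant\omega^{\omega^\xi}$ automatically upgrades to $>$.

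For the inequality itself, your outline is on the right track---build, by induction on $\zeta$, an $\mmm$ tree $(x_\alpha)_{\alpha\in\aaa_\zeta}$ inside $\hhh^K_{\delta\ee}(s)$ whenever $o_w(\hhh^K_{\delta\ee}(s))>\gamma\zeta$, using at the successor step that an $\aaa_\gamma$-indexed $\mmm$ tree in $(\hhh^K_{\delta\ee}(s))_w^{\gamma\zeta}$ cannot sit entirely inside $\hhh^K_\delta$, so some branch admits a convex combination $x_U$ of norm below $\delta$, and closure of the derived sets under convex blocks keeps $s\cat x_U$ in $(\hhh^K_{\delta\ee})_w^{\gamma\zeta}$. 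This is exactly what the paper does. But the ``delicate bookkeeping'' with functionals you anticipate is not there, and this is the one point where your sketch overestimates the difficulty. The finishing step is pure scaling: having built $(x_\alpha)_{\alpha\in\aaa_\zeta}$ in $\hhh^K_{\delta\ee}$ with $\|x_\alpha\|<\delta$ for every $\alpha$, the rescaled family $(\delta^{-1}x_\alpha)_{\alpha\in\aaa_\zeta}$ lies in $B_X$, and any $x^*\in K$ witnessing $x^*(x_{\alpha|_i})\geqslant\delta\ee$ gives $x^*(\delta^{-1}x_{\alpha|_i})\geqslant\ee$; after a routine reindexing to restore the weak-neighborhood condition, this is an $\mmm$ tree in $\hhh^K_\ee$, whence $o_w(\hhh^K_\ee)>\zeta$ by Lemma \ref{lemma2}. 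No matching of functionals across levels of the induction is needed.
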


\begin{remark} It was shown in \cite{L1} that with $K=B_{X^*}$, $Sz_{\delta\ee}(K)\leqslant Sz_\delta(K)Sz_\ee(K)$ for any $\ee, \delta\in (0,1)$.  This inequality and $o_w(\hhh^K_{\delta\ee})\leqslant o_w(\hhh^K_\ee)o_w(\hhh^K_\delta)$ can both be used to prove the remaining statements of Theorem \ref{constant reduction}, but these inequalities do not imply each other from Theorem \ref{main theorem}.  Indeed, examining the proof of Theorem \ref{main theorem}, the first inequality of Theorem \ref{constant reduction} can only be used to prove that there exists a constant $C>1$ so that $Sz_{\delta\ee}(K)\leqslant Sz_{\ee/C}(K)Sz_{\delta/C}(K)$.  Similarly, the inequality $Sz_{\delta\ee}(K)\leqslant Sz_\ee(K)Sz_\delta(K)$ combined with Theorem \ref{main theorem} only yields a weakened version of the first inequality of Theorem \ref{constant reduction} involving a constant.

\end{remark}

\begin{proof}[Proof of Theorem \ref{constant reduction}] We first assume the first inequality of the theorem and complete the proofs of the remaining statements.  Fix $\delta, \ee\in (0,1)$ and assume $o_w(\hhh^K_\ee)>\omega^{\omega^\xi}$.  Assume $o_w(\hhh^K_\delta)=\zeta<\omega^{\omega^\xi}$ and fix $n\in \nn$ so that $\delta^n<\ee$.  Then $$\omega^{\omega^\xi}< o_w(\hhh^K_\ee) \leqslant o_w(\hhh^K_{\delta^n})\leqslant o_w(\hhh^K_\delta)^n=\zeta^n<\omega^{\omega^\xi},$$ a contradiction.  Thus $o_w(\hhh^K_\delta)\geqslant \omega^{\omega^\xi}$, but since the index $o_w(\hhh^K_\delta)$ must be a successor, this inequality is strict.  

Next, suppose $\xi$ is a limit ordinal and $Sz(K)\geqslant \omega^{\omega^\xi}$.  Then for any $\zeta<\xi$, there exists $\ee>0$ so that $o_w(\hhh^K_\ee)>\omega^{\omega^\zeta}$ and, by the previous paragraph, $o_w(\hhh^K_{1/2})>\omega^{\omega^\zeta}$.  Since this holds for every $\zeta<\xi$, $o_w(\hhh^K_{1/2})\geqslant \omega^{\omega^\xi}$, and again this inequality must be strict.  Therefore $Sz(K)>\omega^{\omega^\xi}$, and there is no Banach space with Szlenk index $\omega^{\omega^\xi}$.  

We last turn to the proof of the first inequality of the theorem. Of course, is suffices to consider the case that $o_w(\hhh^K_\delta), o_w(\hhh^K_\ee)\in \ord$. Let $\xi=o_w(\hhh^K_\delta)$.    We will show the following claim:  If $s\in \hhh^K_{\delta\ee}$ is such that $o_w(\hhh^K_{\delta\ee}(s))>\xi \zeta$, then there exists $(x_\alpha)_{\alpha\in \aaa_\zeta}\subset B_X$ so that for each $\alpha\in \aaa_\zeta$, \begin{enumerate}[(i)]\item $\|x_\alpha\|<\delta$ , \item $(x_{\alpha|_i})_{i=1}^{|t|}\in \hhh^K_{\delta\ee}(s)$, \item if $\alpha=(t, (U_1, \ldots, U_n))$, $x_\alpha\in U_n$. \end{enumerate} 

Applying this with $s=\varnothing$ gives that if $o_w(\hhh^K_{\delta\ee})>\xi\zeta$, there exists $(x_\alpha)_{\alpha\in \aaa_\zeta}$ satisfying properties (i)-(iii).  Then it is easy to verify that $(\delta^{-1} x_\alpha)_{\alpha\in \aaa_\zeta}\subset B_X$ witnesses the fact that $o_w(\hhh^K_\ee)>\zeta$, using homogeneity and Lemma \ref{lemma2}.  Therefore if the inequality were to fail, we could set $\zeta=o_w(\hhh^K_\ee)$ and obtain a contradiction.

The claim is trivial for $\zeta=0$ or $\zeta$ a limit ordinal.  Assume the claim holds for a given $\zeta$ and assume $s\in \hhh^K_{\delta\ee}(s)$ is such that $o_w(\hhh^K_{\delta\ee}(s))>\xi(\zeta+1)=\xi\zeta+\xi$ for some $s\in \hhh^K_{\delta\ee}$.  We will show that for any $U\in \mmm$, there exists $x_U\in \delta B_X\cap U$ so that $o_w(\hhh^K_{\delta\ee}(s\cat x_U))>\xi\zeta$.  Let $\hhh=(\hhh^K_{\delta\ee}(s))^{\xi\zeta}_w$ and note that since $o_w(\hhh^K_{\delta\ee}(s))>\xi\zeta+\xi$, $o_w(\hhh)>\xi$.  This means we can find $(y_\alpha)_{\alpha\in \aaa_\xi}$ so that for each $\alpha\in \aaa_\xi$, $(y_{\alpha|_i})_{i=1}^{|\alpha|}\in \hhh$ and for $\alpha=(t, (U_1, \ldots, U_n))$, $y_\alpha\in U_n$.  By replacing $y_{(t, (U_1, \ldots, U_n))}$ with $y_{(t, (U\cap U_1, \ldots, U\cap U_n))}$, we may assume that $(y_\alpha)_{\alpha\in \aaa_\xi}\subset U\cap B_X$.   If every convex combination of $(y_{\alpha|_i})_{i=1}^{|\alpha|}$ has norm at least $\delta$, Lemma \ref{lemma2} can be applied to $(y_\alpha)_{\alpha\in \aaa_\xi}$ to deduce that $o_w(\hhh^K_\delta)>\xi$, a contradiction.  Therefore there exist $\alpha\in \aaa_\xi$ and a vector $x_U$ which is a convex combination of $(y_{\alpha|_i})_{i=1}^{|\alpha|}$ which has norm less than $\delta$. By convexity of $U$, $x_U\in U$. Since $(y_{\alpha|_i})_{i=1}^{|\alpha|}\in \hhh$, $s\cat (y_{\alpha|_i})_{i=1}^{|\alpha|}\in  (\hhh^K_{\delta\ee})^{\xi\zeta}_w$.  Since $s\cat x_U$ is a convex block of $s\cat (y_{\alpha|_i})_{i=1}^{|\alpha|}\in (\hhh^K_{\delta\ee})^{\xi\zeta}_w$, $s\cat x_U\in (\hhh^K_{\delta\ee})^{\xi\zeta}_w$, and $o_w(\hhh^K_{\delta\ee}(s\cat x_U))>\xi\zeta$.  

Next, for each $U\in \mmm$ and $x_U\in \delta B_X\cap U$ with $o_w(\hhh^K_{\delta\ee})>\xi\zeta$, we use the inductive hypothesis to find $(x^U_\alpha)_{\alpha\in \aaa_\zeta}$ satisfying (i)-(iii) with $s$ replaced by $s\cat x_U$.  Then define $(x_\alpha)_{\alpha\in \aaa_{\zeta+1}}$ by letting $x_{(\zeta+1, U)}=x_U$ and $x_{(\zeta+1, U)\cat (t,\sigma)}=x^U_{(t, \sigma)}$ for $t\in \ttt_\zeta$.  It is easy to verify that $(x_\alpha)_{\alpha\in \aaa_{\zeta+1}}$ satisfies (i)-(iii).

\end{proof}


\begin{thebibliography}{HD}

\normalsize
\baselineskip=17pt

\bibitem{AJO} D. Alspach, R. Judd, E. Odell. \emph{The Szlenk index and local $\ell_1$-indices}, Positivity, 9 (2005), no. 1,1-44. 

\bibitem{BCF} K. Beanland, R.M. Causey, D. Freeman, \emph{Classes of operators determined by ordinal indices}, preprint.  

\bibitem{Bo} J. Bourgain. \emph{On convergent sequences of continuous functions}, Bull. Soc. Math. Bel., (1980) 32, 235-249.

\bibitem{B} P.A.H. Brooker, \emph{Asplund operators and the Szlenk index}, Operator Theory 68 (2012), 405-442.

\bibitem{B2} P.A.H. Brooker, \emph{Direct sums and the Szlenk index}, J. Funct. Anal. 260 (2011), 2222-2246.

\bibitem{BL} P.A.H. Brooker, G. Lancien, \emph{Three space properties and asymptotic structure of Banach spaces}, J. Math. Anal. Appl. 398(2), 2013, 867-871.

\bibitem{C2} R.M. Causey, \emph{Concerning the Szlenk index}, submitted.  

\bibitem{C} R.M. Causey, \emph{Proximity to $\ell_p$ and $c_0$ in Banach spaces}, submitted.  

\bibitem{HL} P. H\'{a}jek, G. Lancien, \emph{Various slicing indices on Banach spaces}, Mediterr. J. Math. 4 (2007), 179-190. 

\bibitem{L0} G. Lancien, \emph{On the Szlenk index and the weak$^*$ dentability index}, Quarterly J. Math. Oxford, 47, 1996, 59-71. 

\bibitem{L1} G. Lancien, \emph{On uniformly convex and uniformly Kadec-Klee renormings}, Serdica Math. J. 21 (1995) 1-18.

\bibitem{L2} G. Lancien, \emph{A survey on the Szlenk index and some of its applications,} Rev. R. Acad. Cienc. Exactas Fis. Nat. Ser. A Mat. 100 (1-2), (2006) 209-235.

\bibitem{M} J.D. Monk. \emph{Introduction to set theory}, McGraw-Hill, (1969). 

\bibitem{OSZ} E. Odell, Th. Schlumprecht, A. Zs\'{a}k.  \emph{Banach spaces of bounded Szlenk index}, Studia Math. 183 (2007), no. 1, 63-97. 

\bibitem{Sz} W. Szlenk, \emph{The non existence of a separable reflexive Banach space universal for all separable reflexive Banach spaces}, Studia Math. 30 (1968),53-61. 

\end{thebibliography}
\end{document}